\documentclass[12pt]{amsart}


\usepackage{amsthm,amsfonts,amsmath,amsxtra,amscd,amssymb,txfonts,pxfonts,empheq}
\usepackage{hyperref}
\usepackage{xfrac}
\usepackage{graphicx}
\usepackage{url}
\usepackage{mathtools}
\usepackage{verbatim}
\usepackage{mathrsfs}
\usepackage{refcount}
\usepackage{tikz}
\usetikzlibrary{matrix}
\usepackage{tikz-cd}

\usepackage{nicefrac}  

\usepackage{color}


\newtheorem{proposition}{Proposition}[section]
\newtheorem{theorem}[proposition]{Theorem}
\newtheorem{lemma}[proposition]{Lemma}
\newtheorem{corollary}[proposition]{Corollary}

\newtheorem{definition}[proposition]{Definition}
\newtheorem{formulation}{Problem}
\newtheorem{Notation}{Notation}

\newtheorem{imptheorem}{Theorem}

\newtheorem{impsubtheorem}{Theorem}[imptheorem]

\newtheorem{impsubsubtheorem}{Theorem}[imptheorem]

\textwidth 17cm
\oddsidemargin 0cm
\evensidemargin 0cm

\newcommand{\repa}{\mathrm{Re}}

\newcommand{\lb}{\left\{}
\newcommand{\rb}{\right\}}

\newcommand{\ls}{\left[}
\newcommand{\rs}{\right]}

\newcommand{\lp}{\left(}
\newcommand{\rp}{\right)}



\newcommand\chars[2]{\left[\begin{smallmatrix}#1\\ #2\end{smallmatrix}\right]}

\newcommand{\Thefunccar}[4]{\Theta\chars{#3}{#4} ({#1};{#2})}
\newcommand{\Thefunc}[2]{\Theta({#1};{#2})}
\newcommand{\Theconst}[3]{\Theta \chars{#2}{#3} ({#1})}
\newcommand{\Tfunc}{T(\xbf)}
\newcommand{\Tfuncnow}[1]{T({#1})}
\newcommand{\Thf}{\Theta_{\permat}}
\newcommand{\Thftilde}{\Theta_{\tilde{\permat}}}

\newcommand{\riemdiv}{\boldsymbol{\Delta}}

\newcommand{\thetadivreal}{(\Thf)_{real}}

\newcommand{\permat}{\boldsymbol{\tau}}
\newcommand{\permatent}{\tau}
\newcommand{\siegel}{\mathcal{H}_g}
\newcommand{\siegelg}{\mathcal{H}_g}

\newcommand{\mfrak}{\mathfrak{M}}
\newcommand{\afrak}{\mathfrak{A}}

\newcommand{\Bcal}{\mathcal{B}}

\newcommand{\homgroup}{H_1(\Gamma, \Zbb)}

\newcommand{\abelmap}{\mathcal{A}}

\newcommand{\Jac}{\mathrm{Jac}}

\newcommand{\Pic}{\mathrm{J_0}}


\newcommand{\srgroup}{\mathbb{G}_g^{\Rbb}}

\newcommand{\srsiegel}{\mathcal{H}^{\Rbb}_{g}}


\newcommand{\matm}{M_{g,\lambda,\varepsilon}}
\newcommand{\matmnow}[3]{M_{{#1},{#2},{#3}}}

\newcommand{\realgroup}{\mathbb{G}_{g,\lambda, \varepsilon}}
\newcommand{\realgroupnow}[3]{\mathbb{G}_{{#1},{#2},{#3}}}

\newcommand{\realsiegel}{\mathcal{H}_{g,\lambda,\varepsilon}}
\newcommand{\realsiegelnow}[3]{\mathcal{H}_{{#1},{#2},{#3}}}


\newcommand{\seto}{\mathcal{O}_{g, \lambda, \varepsilon}}
\newcommand{\setonow}[3]{\mathcal{O}_{{#1},{#2},{#3}}}
\newcommand{\Ocal}{\mathcal{O}}

\newcommand{\sete}{\mathcal{E}_{g, \lambda, \varepsilon}}
\newcommand{\setenow}[3]{\mathcal{E}_{{#1},{#2},{#3}}}
\newcommand{\Ecal}{\mathcal{E}}

\newcommand{\setbnow}[3]{\mathcal{B}_{{#1},{#2},{#3}}}

\newcommand{\settnow}[3]{\mathcal{T}_{{#1},{#2},{#3}}}

\newcommand{\syms}[3]{\mathcal{S}_{{#1},{#2},{#3}}}

\newcommand{\prl}{\boldsymbol{\pi}_{\lambda}}
\newcommand{\prt}{\boldsymbol{\pi}_{2}}
\newcommand{\prlmt}{\boldsymbol{\pi}_{\lambda-2}}

\newcommand{\npos}[1]{p({#1})}
\newcommand{\nneg}[1]{n({#1})}

\DeclareMathOperator{\Rsp}{R}
\DeclareMathOperator{\Isp}{I}

\DeclareMathOperator{\Rcmp}{\mathbf{R}}
\DeclareMathOperator{\Icmp}{\mathbf{I}}

\newcommand{\Zcal}{\mathcal{Z}}

\newcommand{\La}[1]{\Lambda({#1})}
\newcommand{\E}[1]{E({#1})}
\newcommand{\A}[1]{A({#1})}


\newcommand{\Zbb}{\mathbb{Z}}
\newcommand{\Rbb}{\mathbb{R}}
\newcommand{\Cbb}{\mathbb{C}}

\newcommand{\Mat}[2]{\mathrm{Mat}({#1},{#2})}
\newcommand{\GL}[2]{\mathrm{GL}({#1},{#2})}
\newcommand{\Sp}[2]{\mathrm{Sp}({#1},{#2})}

\newcommand{\tp}[1]{{#1}^{\intercal}}
\newcommand{\diag}{\mathrm{diag}}
\DeclareMathOperator{\sgn}{sgn}


\newcommand{\bsalpha}{\boldsymbol{\alpha}}
\newcommand{\bsbeta}{\boldsymbol{\beta}}

\newcommand{\bslambda}{\boldsymbol{\lambda}}
\newcommand{\bsmu}{\boldsymbol{\mu}}

\newcommand{\ebf}{\mathbf{e}}
\newcommand{\mbf}{\mathbf{m}}
\newcommand{\pbf}{\mathbf{p}}
\newcommand{\qbf}{\mathbf{q}}
\newcommand{\rbf}{\mathbf{r}}
\newcommand{\sbf}{\mathbf{s}}

\newcommand{\xbf}{\mathbf{x}}

\newcommand{\zbf}{\mathbf{z}}


\title{Classification of real Riemann surfaces and their Jacobians in the critical case}
\author{Pietro Giavedoni}
\address{E-mail: pietro.giavedoni@gmail.com}

\usepackage[foot]{amsaddr}

\makeindex

\begin{document}

\def\biblio{}

\begin{abstract}
 For every $g\geq 2$ we distinguish real period matrices of real Riemann surfaces of topological type $(g,0,0)$ from the ones of topological type $(g,k,1)$, with $k$ equal to one or two for $g$ even or odd respectively (Theorem \ref{theorem_2}). To that purpose, we exhibit new invariants of real principally polarized abelian varieties of orthosymmetric type (Theorem \ref{theorem_1}). As a direct application, we obtain an exhaustive criterion to decide about the existence of real points on a real Riemann surface, requiring only a real period matrix of its and the evaluation of the sign of at most one (real) theta constant (Theorem \ref{thm_criterion}). A part of our real, algebro-geometric instruments first appeared in the framework of nonlinear integrable partial differential equations.
\end{abstract}

\maketitle

\section{Introduction}

  \label{intro}

\subsection{Formulation of the problem in terms of real Riemann surfaces}

  \label{form_of_prob}
  Let $\Gamma$ be a Riemann surface{\footnote{That is a compact, one dimensional complex manifold.}} of genus $g\geq 1$. Recall that a basis of homological cycles
  \begin{subequations}\label{can_bas}
  \begin{align}
   \Bcal = \lb a_1,a_2, \ldots,a_g; b_1,b_2, \ldots,b_g \rb
  \end{align}
  in $\homgroup$ is said \emph{canonical} if it satisfies the intersection relations 
  \begin{align} \label{int_rel}
   a_j\circ a_k = b_j\circ b_k = 0, \quad\quad a_j\circ b_k = \delta_{jk} \quad\quad\quad j,k=1,2,\ldots g.
  \end{align}
  \end{subequations}
  The basis of \emph{normalized holomorphic differential}
  \begin{align}
    \omega_1, \omega_2, \ldots \omega_g 
  \end{align}
  associated to $\Bcal$ is individuated by the conditions 
  \begin{align}
   \oint_{a_j}\omega_k = \delta_{jk} \quad\quad\quad j,k = 1,2,\ldots,g.
  \end{align}
  Correspondingly, the \emph{period matrix} of $\Gamma$ computed with respect to the basis $\Bcal$ is the $g\times g$-dimensional complex matrix $\permat$ defined elementwise as follows
  \begin{align}
   \permat_{jk}:= \oint_{b_j}\omega_k \quad\quad\quad j,k=1,2,\ldots g.
  \end{align}
  According to a classical result of Riemann, $\permat$ belongs to the \emph{Siegel upper-half space}
  \begin{align}\label{intro:5}
   \siegelg = \lb \permat \in \Mat{g\times g}{\Cbb} \mid \tp{\permat}=\permat, \mathrm{Im} (\permat) \mbox{ positive definite} \rb,
  \end{align}
  of dimension $g$. Let us introduce the modular group
  \begin{align} \label{intro:7}
   \Sp{2g}{\Zbb} = \lb G\in \Mat{2g\times 2g}{\Zbb} \mid G J \tp{G} = J \rb .
  \end{align}  
  where $J$ is the standard, $2g$-dimensional symplectic matrix
  \begin{align} \label{intro:7einhalb}
   J = \ls\begin{smallmatrix} \mathbf{0} & \mathrm{Id}_g \\ -\mathrm{Id}_g & \mathbf{0} \end{smallmatrix}\rs.
  \end{align}
  Denote with $\mfrak$ the modular action 
  \begin{subequations} \label{intro:89} 
   \begin{align} \label{intro:8}
    \mfrak : \Sp{2g}{\Zbb} \times \siegelg \longrightarrow \siegelg
   \end{align}
  of this last one on the Siegel upper-half space defined as follows
  \begin{align} \label{intro:9}
   \mfrak (\ls\begin{smallmatrix} A & B \\ C & D \end{smallmatrix}\rs, \permat) = (A \permat + B)(C \permat + D)^{-1} 
   \quad\quad \quad 
   \permat \in \siegelg, \ls\begin{smallmatrix} A & B \\ C & D \end{smallmatrix}\rs \in \Sp{2g}{\Zbb}.
  \end{align}
 \end{subequations}  
 Another basis $\tilde{\Bcal}$ in $\homgroup$ is also canonical if and only if there exists
 \begin{subequations} \label{trasf_basi}
 \begin{align} \label{trasf_basi_a}
  G = \ls\begin{smallmatrix} A & B \\ C & D \end{smallmatrix}\rs \in \Sp{2g}{\Zbb}
 \end{align}
 such that 
 \begin{align} \label{trasf_Bcal}
  \tilde{\Bcal} = \ls\begin{smallmatrix} D & C \\ B & A \end{smallmatrix}\rs \Bcal.
 \end{align}
 \end{subequations}
 If $\tilde{\permat}$ denotes the corresponding period matrix, one has
 \begin{align} \label{trasf_tau_first}
  \tilde{\permat} = \mfrak (G, \permat).
 \end{align}
 The "viceversa" also holds
 \begin{theorem}[Torelli's Theorem] \label{thm_Torelli}
  Let $\permat, \tilde{\permat}\in \siegelg$ be period matrices of Riemann surfaces $\Gamma, \tilde{\Gamma}$ of genus $g\geq 1$ respectively. These last ones are isomorphic if and only if $\permat$ and $\tilde{\permat}$ belong to the same orbit of the modular action $\mfrak$ of $\Sp{2g}{\Zbb}$ on $\siegelg$.
 \end{theorem}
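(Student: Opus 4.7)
The plan is to treat the two directions separately, since they have very different flavors. The forward implication (isomorphic Riemann surfaces have modularly equivalent period matrices) is essentially formal. An isomorphism $\phi : \Gamma \to \tilde{\Gamma}$ induces a $\Zbb$-linear isomorphism $\phi_{\ast} : H_1(\Gamma,\Zbb) \to H_1(\tilde{\Gamma},\Zbb)$ that preserves the intersection form, together with a pullback $\phi^{\ast}$ on holomorphic differentials which is compatible with period integrals. Hence, if $\Bcal$ is a canonical basis of $H_1(\Gamma,\Zbb)$, then $\phi_{\ast}(\Bcal)$ is a canonical basis of $H_1(\tilde{\Gamma},\Zbb)$ whose associated period matrix of $\tilde{\Gamma}$ coincides with $\permat$. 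Any other canonical basis of $H_1(\tilde{\Gamma},\Zbb)$ differs from $\phi_{\ast}(\Bcal)$ by a transformation of the form \eqref{trasf_Bcal}, so \eqref{trasf_tau_first} yields the statement.

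The converse is the substantive content of the theorem and occupies the bulk of the argument. The strategy is to show that any datum computable from $\permat$ but invariant under the modular action $\mfrak$ is an intrinsic invariant of $\Gamma$, and then to produce an invariant that is fine enough to reconstruct $\Gamma$ up to biholomorphism. Concretely, the pair $(\Cbb^g/(\Zbb^g + \permat \Zbb^g), \Theta(\permat))$ is a principally polarized abelian variety modularly invariant up to isomorphism, namely the Jacobian $\Jac(\Gamma)$ with its theta divisor. The task reduces to proving that this ppav determines $\Gamma$.

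For the reconstruction I would follow one of the two classical routes. In Andreotti's approach one introduces the Gauss map $\mathcal{G} : \Theta_{sm} \to \mathbb{P}(T_0\Jac(\Gamma))^{\ast}$ sending a smooth point of $\Theta$ to its tangent hyperplane; for non-hyperelliptic $\Gamma$ the branch locus of $\mathcal{G}$ is projectively dual to the canonical image of $\Gamma$, and by biduality $\Gamma$ itself is recovered. In Weil's approach one invokes Riemann's theorem to identify $\Theta$, up to translation, with the image $W_{g-1}$ of $\mathrm{Sym}^{g-1}(\Gamma)$ in $\Jac(\Gamma)$, and then isolates $W_1 \cong \Gamma$ by an intersection-theoretic procedure inside $\Jac(\Gamma)$.

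The hard part is precisely this last reconstruction step, and two related obstacles demand attention. First, one must control the $\pm 1$ ambiguity arising from the symmetry of $\Theta$ about its $2$-torsion translates, which produces a potential ambiguity between $\Gamma$ and its conjugate in the non-hyperelliptic case, ultimately resolved using Matsusaka's criterion. Second, the hyperelliptic case must be handled separately because the canonical map collapses $\Gamma$ onto $\mathbb{P}^1$, so the Gauss-map argument degenerates; there one recovers $\Gamma$ from its $2g+2$ Weierstrass points, which are encoded in the singular locus of $\Theta$ via vanishing theta characteristics. These two technicalities are the main obstacles I would expect in a full write-up.
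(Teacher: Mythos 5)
The paper does not prove this statement at all: Torelli's theorem is quoted as a classical result (it is the input to the paper's real-analytic machinery, not one of its contributions), so there is no internal proof to compare yours against. Judged on its own terms, your forward direction is complete and correct: an isomorphism $\phi:\Gamma\to\tilde{\Gamma}$ transports a canonical basis $\Bcal$ to a canonical basis of $H_1(\tilde{\Gamma},\Zbb)$ with the same period matrix, and any two canonical bases are related by (\ref{trasf_Bcal}), whence (\ref{trasf_tau_first}) gives modular equivalence of $\permat$ and $\tilde{\permat}$.

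The converse, however, is only a roadmap, not a proof. You correctly reduce it to the statement that the principally polarized Jacobian $(\Jac(\Gamma),\Theta)$ determines $\Gamma$ up to isomorphism, and you correctly name the two classical reconstruction strategies (Andreotti's Gauss-map/branch-locus argument and Weil's intersection-theoretic isolation of $W_1$ inside $W_{g-1}$), together with the two genuine technical obstacles (the $\pm1$ ambiguity resolved via Matsusaka's criterion, and the degenerate hyperelliptic case handled through vanishing theta characteristics and Weierstrass points). But none of these steps is carried out: the duality argument identifying the branch locus of the Gauss map with the dual of the canonical curve, the application of Riemann's theorem identifying $\Theta$ with a translate of $W_{g-1}$, and the hyperelliptic analysis are each substantial arguments in their own right. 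As a citation-level justification for invoking the theorem -- which is all the paper itself does -- your sketch is accurate and well organized; as a proof, the entire substantive direction remains to be written.
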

 According to the previous result, the isomorphism class of a Riemann surface, and so the complete information about its geometry, is contained in any of its period matrices. \\
 
 \vspace{1em}

 Let us now consider the additional datum of an anti-holomorphic involution
 \begin{align}
  \sigma: \Gamma \longrightarrow \Gamma \quad\quad\quad \sigma^2 = \mathrm{Id}_{\Gamma}, \quad \partial \sigma = 0
 \end{align}
 on $\Gamma$. (See \cite{bujalance2010symmetries} for a detailed introduction to the topic.) The couple $(\Gamma, \sigma)$ is referred to as a \emph{real Riemann surface}.  Given another real Riemann surface $(\tilde{\Gamma}, \tilde{\sigma})$, a continuous or holomorphic morphism $\phi:\Gamma\rightarrow \tilde{\Gamma}$ is \emph{real}\footnote{Without an opposite indication, morphisms between real Riemann surfaces are implicitly understood to be real.} if it makes the following diagram commutative
 \begin{center}
    \begin{tikzcd}
        \Gamma \arrow[r, "\phi"] \arrow[d, "\sigma"]
        & \tilde{\Gamma} \arrow[d, "\tilde{\sigma}" ] \\
        \Gamma \arrow[r, "\phi" ]
        & \tilde{\Gamma}
    \end{tikzcd}
 \end{center}
 Points of $\Gamma$ invariant for the action of $\sigma$ are also said \emph{real}. The locus of these last ones consists of finitely many connected components called \emph{ovals}, each of them being homeomorphic to $S^1$. Denote with $k$ their number. Define the quantity $\delta$ to be one if the union of the ovals disconnects $\Gamma$ and zero otherwise. In the former case $(\Gamma, \sigma)$ is said to be of \emph{separated} type. The triple $(g,k,\delta)$ determines the class of real homeomorphism of the real Riemann surface. As such, it is referred to as its \emph{topological type}. Harnack and Weichold proved that a triple $(g,k,\delta)$ is the topological type of some real Riemann surface if and only if it satisfies the following conditions 
 \begin{align}
   (\delta=1, \,\, 1\leq k \leq g+1, \,\, k\equiv g+1 \mod 2) \quad\quad \mbox{or} \quad\quad (\delta=0,\,\, 0\leq k\leq g).
 \end{align}
 
 In particular, a real Riemann surface has real points if and only if its topological type differs from $(g,0,0)$.
 \\
 \vspace{1em}

 The definition of period matrix provided above is insufficient to describe the geometry of a real Riemann surface. Indeed, it is not difficult to find examples of Riemann surfaces bearing several, non equivalent real structures.  This motivates the introduction of some additional requirements on the bases of homological cycles considered. Recall that the anti-holomorphic involution $\sigma$ on $\Gamma$ induces an involution $\sigma_{\star}$ in $\homgroup$.
\begin{definition}
 Let $(\Gamma, \sigma)$ be a real Riemann surface of genus $g\geq 1$. A canonical basis of cycles $\Bcal$ in $\homgroup$ of the form (\ref{can_bas}) satisfying the additional condition 
 \begin{align}
  \sigma_{\star}a_j = a_j \quad\quad j=1,2,\ldots, g.
 \end{align}
 is said \emph{semi-real}. This condition implies
 \begin{align}
  \sigma_{\star}b_j = -b_j +\sum_{k=1}^g M_{jk}a_k \quad\quad j=1,2,\ldots,g,
 \end{align}
 for some $(g\times g)$-dimensional, symmetric matrix $M$ with integral entries. This last one will be referred to as the \emph{reflection matrix of $(\Gamma,\sigma)$ with respect to $\Bcal$}. 
\end{definition}
The existence of semi-real bases of cycles on every real Riemann surface of genus $g\geq 1$ is a classical result, proved, for example, in  \cite{Com155}. Another basis of cycles $\tilde{\Bcal}$ is also semi-real if and only if it is related to $\Bcal$ via a transformation of the form (\ref{trasf_basi}) yielded by some element $G$ of the \emph{semi-real modular group}
\begin{align}
 \srgroup = \lb \ls\begin{smallmatrix} A & B \\ 0 & (\tp{A})^{-1}\end{smallmatrix}\rs  
  \, | \, A \in \GL{g}{\Zbb}, 
  B\in \Mat{g\times g}{\Zbb} \mbox{ and } A\tp{B}=B\tp{A}
 \rb \leq \Sp{2g}{\Zbb}.
\end{align}
In this case, the respective reflection matrices $M$ and $\tilde{M}$ correspond according to the law
\begin{align} \label{new_action}
 \tilde{M} = A M \tp{A} + 2B \tp{A}, \quad\quad G = \ls\begin{smallmatrix} A & B \\ 0 & (\tp{A})^{-1}\end{smallmatrix}\rs \in \srgroup. 
\end{align}
\begin{definition}
 Let $(\Gamma,\sigma)$ be a real Riemann surface of genus at least one. A period matrix of its is said \emph{semi-real} if computed w.r.t. a semi-real basis of cycles $\Bcal$ in $\homgroup$. In this case, one has
 \begin{align} \label{semi_real_form}
  \permat = \nicefrac{1}{2}M + iT,
 \end{align}
 with $M$ the reflection matrix of $(\Gamma, \sigma)$ with respect to $\Bcal$ and $T$ a $g\times g$-dimensional, symmetric and positive definite real matrix.
\end{definition}
For every integer $g\geq 1$, let us define the locus   
\begin{align}
 \srsiegel = \lb \permat \in \siegelg\, | \, 2\repa(\permat)  
 \in \Mat{g\times g}{\Zbb}  
 \rb
\end{align}
of all semi-real Riemann matrices of the form (\ref{semi_real_form}) as the \emph{semi-real Siegel upper-half space of dimension $g$.} The modular action $\mfrak$ defined in (\ref{intro:89}) is easily verified to reduce to an action 
\begin{subequations}
\label{azione_semireale}
\begin{align}
 \mfrak: \srgroup\times\srsiegel \longrightarrow \srsiegel 
\end{align}
 of the semi-real modular group on the semi-real Siegel upper-half space. The explicit expression (\ref{intro:9}) simplifies in this case to 
\begin{align} \label{azione_semireale_b}
 \begin{split}
  \mfrak(G, \permat) 
  &= A \permat \tp{A} + B \tp{A}\\
  &= (A\mathrm{Re}(\permat)\tp{A}+B\tp{A})+iA\mathrm{Im}(\permat)\tp{A},
 \end{split}
 \begin{split}
  \permat \in \srsiegel, 
  G=\ls\begin{smallmatrix} A & B \\ 0 & (\tp{A})^{-1}\end{smallmatrix}\rs \in \srgroup
 \end{split}
\end{align}

\end{subequations}
In particular, the doubled real part of $\permat$ undergoes transformation (\ref{new_action}).The following, semi-real version of Theorem \ref{thm_Torelli} holds (see \cite{seppala1989moduli}):
\begin{theorem}[Real Torelli's Theorem] \label{thm_real_torelli}
 Let $\permat, \tilde{\permat}$ be semi-real period matrices of real Riemann surfaces $(\Gamma, \sigma)$, $(\tilde{\Gamma}, \tilde{\sigma})$ of genus $g\geq 2$ respectively. These last ones are real isomorphic if and only if  $\permat$ and $\tilde{\permat}$ lie in the same orbit of the modular action of the group $\srgroup$ on the semi-real Siegel upper-half space $\srsiegel$. 
\end{theorem}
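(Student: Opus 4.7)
The plan is to reduce the theorem to the classical Torelli theorem (Theorem \ref{thm_Torelli}) by exploiting the fact, already recalled in the excerpt, that changes of semi-real bases are exactly those given by elements of $\srgroup$. The realness of the involutions $\sigma,\tilde\sigma$ will translate, on the homological side, into the requirement $G\in\srgroup$.

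\emph{Necessity.} Given a real isomorphism $\phi:\Gamma\to\tilde\Gamma$, classical Torelli yields $G\in\Sp{2g}{\Zbb}$ with $\tilde\permat=\mfrak(G,\permat)$ and with $\phi_*\Bcal$ related to $\tilde\Bcal$ via (\ref{trasf_basi}). Since $\phi$ intertwines the involutions, one has $\tilde\sigma_{\star}\phi_*(a_j)=\phi_*(a_j)$, so $\phi_*\Bcal$ is itself semi-real on $(\tilde\Gamma,\tilde\sigma)$. By the characterization of changes of semi-real bases recalled just before (\ref{new_action}), $G$ must lie in $\srgroup$.

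\emph{Sufficiency.} Conversely, assume $\tilde\permat=\mfrak(G,\permat)$ for some $G=\ls\begin{smallmatrix} A & B \\ 0 & (\tp{A})^{-1}\end{smallmatrix}\rs\in\srgroup$. Let $\Bcal'=\{a'_1,\dots,a'_g;b'_1,\dots,b'_g\}$ be the canonical basis of $\homgroup$ obtained from $\Bcal$ via (\ref{trasf_basi}); by (\ref{new_action}) it is semi-real on $(\Gamma,\sigma)$, with reflection matrix $M':=AM\tp{A}+2B\tp{A}$, and the period matrix of $\Gamma$ with respect to $\Bcal'$ equals $\mfrak(G,\permat)=\tilde\permat$. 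Classical Torelli then supplies a holomorphic isomorphism $\phi:\Gamma\to\tilde\Gamma$ with $\phi_*\Bcal'=\tilde\Bcal$. Set $\psi:=\tilde\sigma\circ\phi\circ\sigma:\Gamma\to\tilde\Gamma$, which is holomorphic (composition of two anti-holomorphic and one holomorphic map). A short computation using the semi-realness of both $\Bcal'$ and $\tilde\Bcal$ yields $\psi_*a'_j=\tilde a_j=\phi_*a'_j$ and $\psi_*b'_j=\tilde b_j+\sum_{k=1}^g(M'_{jk}-\tilde M_{jk})\tilde a_k$, where $\tilde M$ denotes the reflection matrix of $(\tilde\Gamma,\tilde\sigma)$ with respect to $\tilde\Bcal$. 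Taking real parts in the identity $\tilde\permat=\mfrak(G,\permat)$ via (\ref{azione_semireale_b}) gives $\tilde M=M'$, so $\psi_*=\phi_*$ on the whole basis $\Bcal'$.

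\emph{Main obstacle.} The decisive remaining step is to promote the equality $\psi_*=\phi_*$ on $\homgroup$ to the identity $\psi=\phi$ of holomorphic maps. For $g\geq 2$ this rests on the rigidity underlying classical Torelli: a holomorphic automorphism of a Riemann surface of genus at least two acting trivially on $H_1$ must be the identity (the hyperelliptic involution acts as $-\mathrm{Id}$ on homology, so it cannot cause any ambiguity here). Applying this to $\phi^{-1}\circ\psi\in\mathrm{Aut}(\Gamma)$ yields $\psi=\phi$, i.e., $\tilde\sigma\circ\phi=\phi\circ\sigma$, concluding the proof. The hypothesis $g\geq 2$ enters precisely at this final rigidity step, exactly as in Theorem \ref{thm_Torelli}.
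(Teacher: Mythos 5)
The paper does not actually prove Theorem \ref{thm_real_torelli}: it is quoted from \cite{seppala1989moduli}, so there is no internal argument to measure yours against. Taken on its own, your reduction to the complex Torelli theorem plus homological rigidity is sound and follows a standard route; the computation of $\psi_*$ on $\Bcal'$ and the identification $\tilde M=M'$ from the real part of (\ref{azione_semireale_b}) are correct, as is the final rigidity step (faithfulness of $\mathrm{Aut}(\Gamma)$ on $H_1(\Gamma,\Zbb)$ for $g\geq 2$, via Lefschetz or Hurwitz). Two points should be tightened. First, in the sufficiency direction the step ``Classical Torelli then supplies a holomorphic isomorphism $\phi$ with $\phi_*\Bcal'=\tilde{\Bcal}$'' uses strictly more than Theorem \ref{thm_Torelli} as stated, which only produces \emph{some} isomorphism with no control on the induced map in homology; you need the strong form of Torelli's theorem, namely that every isomorphism of the polarized Jacobians is, up to sign, induced by an isomorphism of the curves. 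Even with that, you only obtain $\phi_*\Bcal'=\pm\tilde{\Bcal}$. The sign is harmless --- $-\tilde{\Bcal}$ is again canonical and semi-real, with the same period and reflection matrices, so your computation goes through verbatim after replacing $\tilde{\Bcal}$ by $-\tilde{\Bcal}$ --- but it should be acknowledged rather than suppressed. Second, in the necessity direction Torelli is not needed at all: $\phi_*\Bcal$ is directly a semi-real canonical basis of $(\tilde{\Gamma},\tilde{\sigma})$ with period matrix $\permat$, and the paper's characterization of changes of semi-real bases already forces $G\in\srgroup$; invoking ``classical Torelli'' there is a (harmless) misattribution.
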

The previous theorem guarantees that complete information about the geometry of a real Riemann surface is contained in any of its semi-real period matrices. Let us recall the relation between the (doubled) real part of the latter - i.e. reflection matrices - and the topological type of the former.
The \emph{real type} of an integral, symmetric matrix $M$  is the triple of integers $(g, \lambda, \varepsilon)$, where $g$ denotes its dimension and the remaining quantities are defined as follows:
\begin{align}\label{def_lambda_eps}
 \lambda(M) = \mathrm{rank} (M\mod 2), 
 \quad \quad \quad 
 \varepsilon(M) = \prod_{k=1}^g (1-M_{kk}) \mod 2. 
\end{align}
The conditions
\begin{align}\label{constraints_rt}
 0 \leq \lambda\leq g 
 \quad\quad 
 \mbox{and}
 \quad\quad
 \begin{cases}
  \lambda \text{ even} & \quad \text{if } \varepsilon=1 \\
  \lambda\geq 1        & \quad \text{if } \varepsilon=0 .
 \end{cases}
\end{align}
determine the eligible values for the real type, this last one being said \emph{orthosymmetric} or \emph{diasymmetric} for $\varepsilon$ equal to one or zero respectively. The real type is easily verified to be invariant for transformations of $M$ of the form (\ref{new_action}) yielded by elements of $\realgroup$. In particular, if $M$ is the reflection matrix of a real Riemann surface, this quantity is independent of the underlying semi-real basis of homological cycles. The real type of a matrix of $\srsiegel$ is understood to be defined as the one of its doubled real part. In view of (\ref{azione_semireale_b}), this last one is also invariant for the modular action $\mfrak$ of $\srgroup$.
\begin{Notation}
 Throughout this article, the letter $g$ is understood to indicate an integer of the form $2g_0$ or $2g_0+1$, for some other integer $g_0$. The same holds for the letter $\lambda$. 
\end{Notation}
\begin{theorem}[See \cite{seppala1989moduli}] \label{theorem_all_poss}
 Let $(\Gamma, \sigma)$ be a real Riemann surface of genus $g\geq 2$. The real type $(g, \lambda, \varepsilon)$ of its reflection matrices - or, equivalently, of its semi-real period matrices - is determined by its topological type $(g, k, \delta)$. In particular, one has 
 \begin{align} \label{tttt}
  (\lambda, \varepsilon) = 
  \begin{cases}
   (g+1-k, \delta) & \quad \text{if } k>0 \\
   (2g_0, 1)       & \quad \text{if } k=0
  \end{cases}
 \end{align} 
\end{theorem}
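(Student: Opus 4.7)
Since the real type $(\lambda,\varepsilon)$ is invariant under the action of $\srgroup$, it suffices to exhibit, for each admissible topological type $(g,k,\delta)$, a \emph{single} semi-real basis of cycles whose reflection matrix realizes the claimed values. The uniform recipe is to pass to the quotient Klein surface $\Gamma/\sigma$, whose topological structure is dictated by $(g,k,\delta)$: an orientable bordered surface of genus $g_0=(g+1-k)/2$ with $k$ boundary circles in the separating case, a non-orientable bordered surface in the non-separating case with $k>0$, and a closed non-orientable surface if $k=0$. A distinguished cut system on $\Gamma/\sigma$ lifts, under the double cover $\pi:\Gamma\to\Gamma/\sigma$, to a basis of $\homgroup$ in which the ovals supply $\sigma_\star$-invariant $a$-cycles and transverse arcs, closed up by their $\sigma$-images, supply the $b$-cycles; the semi-reality condition is then automatic.

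For $\delta=1$ and $k>0$ I would take the first $k-1$ ovals as $a_1,\dots,a_{k-1}$ (the $k$-th one being homologous to their sum in view of the separation $\Gamma=\Gamma_+\cup\Gamma_-$), complete them with $2g_0$ further $a$-cycles lifted from a symplectic system on $\Gamma/\sigma$, and pair everything by arcs living in $\Gamma_+$ together with the $\sigma$-reflected copies in $\Gamma_-$. A direct inspection of how $\sigma_\star$ acts on the resulting $b$-cycles shows that $M \bmod 2$ has rank $2g_0=g+1-k$ and purely even diagonal, giving $(\lambda,\varepsilon)=(g+1-k,1)$. For $\delta=0$ and $k>0$, some $\sigma_\star$-invariant cycle meets an oval transversally in exactly one point; including it in the basis forces at least one odd diagonal entry of $M$, so $\varepsilon=0$, while the mod-$2$ rank remains $g+1-k$. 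For $k=0$, $\sigma$ is fixed-point free, the cover $\pi$ is unramified, and a basis lifted from $\Gamma/\sigma$ (closed non-orientable of genus $g+1$) yields $(\lambda,\varepsilon)=(2g_0,1)$ by an analogous direct computation.

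The main technical obstacle is to guarantee that the cut systems one writes down satisfy simultaneously the canonical intersection relations (\ref{int_rel}) and the semi-reality condition $\sigma_\star a_j=a_j$, in particular in the non-orientable cases where one has to control the M\"obius-band neighborhoods attached to $\sigma_\star$-invariant cycles covering loops that carry the non-orientability of $\Gamma/\sigma$. This is handled by an equivariant surgery argument: starting from any cut system on $\Gamma/\sigma$, one modifies it by isotopies and handle-slides so that its lift inherits the correct intersection pattern, using (\ref{new_action}) to absorb the residual ambiguity in the choice of semi-real basis. Since every residual move preserves $(\lambda,\varepsilon)$, the normal-form computation then yields the theorem in all cases.
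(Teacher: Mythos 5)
You should first note that the paper does not prove Theorem \ref{theorem_all_poss} at all: it is imported from the literature (the reference \cite{seppala1989moduli}, with the underlying computations going back to Weichold and Comessatti), so there is no internal proof to compare yours against. Your logical skeleton is nevertheless the correct and standard one: since the real type is invariant under the transformations (\ref{new_action}) induced by $\srgroup$, it suffices to exhibit, for each admissible $(g,k,\delta)$, a single semi-real basis and to compute its reflection matrix, and the passage through the quotient Klein surface $\Gamma/\sigma$ is exactly how the classical normal forms are obtained.

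As written, however, the argument has a genuine gap: essentially all of the content of the theorem sits in the two steps you defer. First, producing a basis that simultaneously satisfies the canonical intersection relations (\ref{int_rel}) and the semi-reality condition $\sigma_{\star}a_j=a_j$ is precisely the nontrivial part (the paper cites \cite{Com155} merely for the \emph{existence} of such bases); invoking an unspecified ``equivariant surgery argument'' does not discharge it, especially in the non-orientable cases where the lifts of one-sided curves on $\Gamma/\sigma$ must be controlled. Second, the values of $(\lambda,\varepsilon)$ are asserted by ``direct inspection'' rather than computed. For instance, in the case $\delta=1$, $k>0$ the ovals you take as $a$-cycles contribute \emph{zero} rows to $M$ (their dual $b$-cycles satisfy $\sigma_{\star}b_j=-b_j$), so the entire rank $g+1-k$ must be produced by the handle cycles, and one has to exhibit the off-diagonal block structure of $\matmnow{\lambda}{\lambda}{1}$ in (\ref{intro:14b}) explicitly to conclude $\varepsilon=1$. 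For $\delta=0$ the link between an odd diagonal entry and a cycle meeting an oval once rests on the identity $M_{jj}=\sigma_{\star}b_j\circ b_j$, which you neither state nor use, and you must also verify that the mod-$2$ rank does not exceed $g+1-k$ for the basis you build, not merely that certain entries are odd. Until these computations are carried out for one explicit basis in each of the three cases, what you have is a correct plan rather than a proof; this is forgivable for a result the paper itself treats as classical, but the gap should be acknowledged as such.
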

According to the previous theorem, the topological type of a real Riemann surface of genus $g\geq 2$ can be deduced from a reflection matrix of its, provided that the real type of this last one differs from $(g, 2g_0, 1)$. In the opposite case, the topological type is bound to be either $(g, g+1-2g_0,1)$ or $(g,0,0)$. On the other side,
\begin{theorem} \label{theorem_impossible}
 Let $M$ be a symmetric, integral matrix of real type $(g,2g_0,1)$ for some integer $g\geq 2$. Every real Riemann surface of topological type $(g,g+1-2g_0,1)$ as well as every real Riemann surface of topological type $(g,0,0)$ admits $M$ as reflection matrix w.r.t. some semi-real basis of cycles in the homology. 
\end{theorem}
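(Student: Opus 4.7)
The plan is to reduce Theorem \ref{theorem_impossible} to a purely algebraic classification of the orbits of symmetric integral matrices under the semi-real modular action (\ref{new_action}). Given any real Riemann surface $(\Gamma, \sigma)$ of topological type $(g, g+1-2g_0, 1)$ or $(g, 0, 0)$, fix an arbitrary semi-real basis $\Bcal_0$ of its homology, and let $M_0$ denote the corresponding reflection matrix. By Theorem \ref{theorem_all_poss}, $M_0$ has real type $(g, 2g_0, 1)$ in both cases. Since applying an element $G = \smallmat{A & B \\ 0 & (\tp{A})^{-1}} \in \srgroup$ to $\Bcal_0$ via (\ref{trasf_basi}) replaces $M_0$ with $AM_0\tp{A} + 2B\tp{A}$, the theorem reduces to the following orbit statement: any two symmetric integral matrices of real type $(g, 2g_0, 1)$ are related by some element of $\srgroup$ acting through (\ref{new_action}).

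To establish this orbit statement, I would reduce modulo $2$. Since the contribution $2B\tp{A}$ vanishes, the induced action on $M \bmod 2$ is simply $\bar M \mapsto \bar A \bar M \tp{\bar A}$ for $\bar A \in \GL{g}{\mathbb{F}_2}$. Now a matrix of real type $(g, 2g_0, 1)$ reduces to a symmetric $\mathbb{F}_2$-matrix of rank $2g_0$ with zero diagonal --- that is, an alternating bilinear form of rank $2g_0$ on $\mathbb{F}_2^g$. By the classical normal form theorem for alternating forms over $\mathbb{F}_2$, any two such forms are $\GL{g}{\mathbb{F}_2}$-congruent, both being equivalent to the standard block-diagonal shape consisting of $g_0$ hyperbolic blocks $\smallmat{0 & 1 \\ 1 & 0}$ padded by zeroes.

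Given $M_0$ and $M$ both of real type $(g, 2g_0, 1)$, I pick $\bar A \in \GL{g}{\mathbb{F}_2}$ realizing this congruence between $M_0 \bmod 2$ and $M \bmod 2$, and lift it to some $A \in \GL{g}{\Zbb}$ via the surjection $\GL{g}{\Zbb} \twoheadrightarrow \GL{g}{\mathbb{F}_2}$. Then $A M_0 \tp{A} \equiv M \pmod 2$, so $N := (M - A M_0 \tp{A})/2$ is a symmetric integral matrix. Setting $B := N(\tp{A})^{-1}$ produces an integral matrix with $A\tp{B} = N = B\tp{A}$, so $G := \smallmat{A & B \\ 0 & (\tp{A})^{-1}}$ lies in $\srgroup$ and (\ref{new_action}) sends $M_0$ to $AM_0\tp{A} + 2N = M$. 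Transporting $\Bcal_0$ via this $G$ using (\ref{trasf_basi}) then yields a semi-real basis of cycles on $(\Gamma, \sigma)$ whose reflection matrix is exactly $M$, completing the argument.

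The main obstacle is the $\mathbb{F}_2$-classification of alternating symmetric bilinear forms; this is classical but constitutes the only non-formal input. All the remaining steps are a straightforward lifting argument built on surjectivity of $\GL{g}{\Zbb} \to \GL{g}{\mathbb{F}_2}$ and a direct verification that the matrix $B := N(\tp{A})^{-1}$ is integral and fulfills the constraint $A\tp{B} = B\tp{A}$ defining $\srgroup$.
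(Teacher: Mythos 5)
Your proof is correct, and at the top level it follows the same reduction as the paper: fix any semi-real basis, note via Theorem \ref{theorem_all_poss} that its reflection matrix $M_0$ has real type $(g,2g_0,1)$ for both topological types in question, and then reduce everything to the claim that any two symmetric integral matrices of this real type lie in a single orbit of the action (\ref{new_action}). The difference is in how that claim is handled: the paper does not prove it at all, but simply cites it as a result of Albert (``symmetric, integral matrices of the same real type are equivalent per transformations of the form (\ref{new_action})''), whereas you supply a self-contained elementary proof of the case actually needed. Your key observations are all sound: the orbit of $M_0$ consists exactly of the matrices $AM_0\tp{A}+2N$ with $A\in\GL{g}{\Zbb}$ and $N$ symmetric integral (since $B:=N(\tp{A})^{-1}$ is integral and satisfies $A\tp{B}=B\tp{A}=N$), so membership in a common orbit is a purely mod-$2$ congruence question; the condition $\varepsilon=1$ forces the diagonal of $M\bmod 2$ to vanish, so the reductions are alternating forms over $\mathbb{F}_2$, classified by their rank $2g_0$; and the lift through the surjection $\GL{g}{\Zbb}\twoheadrightarrow\GL{g}{\mathbb{F}_2}$ is legitimate. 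What Albert's theorem buys beyond your argument is only the diasymmetric case $\varepsilon=0$ (where the mod-$2$ form is non-alternating), which is irrelevant to this theorem; what your argument buys is that the paper's standard forms (\ref{intro:14}) and the statement of Theorem \ref{theorem_impossible} no longer rest on an external reference.
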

In other words, if the real type of a reflection matrix is $(g,2g_0,1)$, the reflection matrix alone is insufficient to distinguish between real Riemann surfaces of topological type $(g,0,0)$ and of topological type $(g,g+1-2g_0,1)$. The challenge of this distinction by means of a full semi-real period matrix - possible in view of theorem \ref{thm_real_torelli} - is referred to as the problem of \emph{classification of real Riemann surfaces in the critical case} and represents the main focus of this article. For our investigations it will be convenient to formulate it in terms of some more specific semi-real period matrices. Theorem \ref{theorem_impossible} is an easy consequence of a result of Albert: symmetric, integral matrices of the same real type are equivalent per transformations of the form (\ref{new_action}). This fact allows the introduction of a standard form $\matm$ for each eligible real type satisfying conditions (\ref{constraints_rt}). Our choice in this article will be
 \begin{subequations}
  \label{intro:14}
  \begin{align} \label{intro:14a}
   \matm = 
   \ls\begin{smallmatrix}
   \matmnow{\lambda}{\lambda}{\epsilon} & \mathrm{0} \\ \mathrm{0} & \mathrm{0}
  \end{smallmatrix}\rs
 \end{align}
 where
 \begin{align}\label{intro:14b}
  \matmnow{\lambda}{\lambda}{\epsilon} =
  \begin{cases}
   \ls\begin{smallmatrix} 
   0 & \mathrm{Id}_{\lambda_0} \\
   \mathrm{Id}_{\lambda_0} & 0
   \end{smallmatrix}\rs 
   & \mbox{if  } \varepsilon=1 \\
   \mathrm{Id}_{\lambda} & \mbox{if } \varepsilon = 0.
  \end{cases}
  \end{align}
 \end{subequations} 
\begin{definition}
 Let $(\Gamma,\sigma)$ be a real Riemann surface of topological type $(g,k,\delta)$, with $g\geq 1$. A semi-real basis of cycles in $\homgroup$ is said \emph{real} if its reflection matrix has the standard form $\matm$, $\lambda$ and $\varepsilon$ being determined by the topological type of $(\Gamma,\sigma)$ according to (\ref{tttt}). The corresponding period matrix $\permat$ is also said \emph{real} and takes the form
 \begin{align}
  \permat = \nicefrac{1}{2}\matm + iT
 \end{align}
 for some real, symmetric and positive definite matrix $T$. 
\end{definition}

The introduction of real period matrices suggests to reduce to a \emph{real Siegel upper-half space}
 \begin{align}
  \realsiegel = \lb \permat \in \siegel \mid 2\mathrm{Re} (\permat) = \matm \rb
 \end{align}
 specifically associated to each eligible real type. As a direct consequence of (\ref{tttt}), each of these last ones contains real period matrices of a unique topological type, with the only exception of $\realsiegelnow{g}{2g_0}{1}$, to which both real period matrices of both topological type $(g,2g_0,1)$ and $(g,0,0)$ belong. We are now ready to give a precise formulation of the main problem addressed in this article: 
\begin{formulation}  [Classification of real Riemann surfaces in the critical case]
 \label{formulation_1}
 For every integer $g\geq 2$ distinguish real period matrices of real Riemann surfaces of topological type $(g,0,0)$ from the ones of real Riemann surfaces of topological type $(g,g+1-2g_0,1)$.
\end{formulation}
For each eligible type $(g, \lambda,\varepsilon)$ satisfying conditions (\ref{constraints_rt}) we will also introduce the corresponding \emph{real modular group}
\begin{align}\label{def_real_mod_group}
  \realgroup = \lb 
  \ls\begin{smallmatrix} A & \nicefrac{1}{2}(M-AM\tp A)(\tp A)^{-1} \\ 0 & (\tp A)^{-1} \end{smallmatrix}\rs
  | \,\, 
  A \in \GL{g}{\Zbb}
  ,\,\, 
  M \equiv AM\tp A \,\,\,\mathrm{mod} \,\,\, 2
  \rb \leq \Sp{2g}{\Zbb}.
 \end{align}

Its elements are the matrices of $\srgroup$ preserving the standard form $\matm$ introduced in (\ref{intro:14}) per transformations of the form (\ref{new_action}). As such, via transformations of the form (\ref{trasf_basi}), they also preserve real bases of cycles in the homology of real Riemann surfaces, whose topological type corresponds to $(g,\lambda,\varepsilon)$ according to (\ref{tttt}). Moreover, in view of (\ref{azione_semireale}), the modular action $\mfrak$ defined in (\ref{intro:89}) reduces to an action
\begin{align}
 \mfrak: \realgroupnow{g}{\lambda}{\varepsilon} \times \realsiegelnow{g}{\lambda}{\varepsilon} \longrightarrow \realsiegelnow{g}{\lambda}{\varepsilon}
\end{align}
of the real modular group on the real Siegel upper-half space of the same type.

 \subsection{Formulation of the problem in terms of real Jacobians} 
 \label{subsect_real_jacobians}
 We start by recalling some well-known facts about classical, complex principally polarized abelian varieties (abbr. ppavs) and Jacobians of Riemann surfaces.\\
 Fix an integer $g\geq 1$. A period matrix for a $g$-dimensional ppav $(X,E)$ can be computed following the same procedure described for Riemann surfaces in the previous section, provided that (\ref{int_rel}) is replaced with 
 \begin{align}
  E(a_j,a_k) = E(b_j,b_k) =0, \quad E(a_j,b_k)=\delta_{jk}, \quad\quad j,k=1,2,\ldots g. 
 \end{align}
 To every matrix $\permat$ of the Siegel upper-half space $\siegel$ defined in (\ref{intro:5}) there corresponds a principally polarized abelian variety given by
 \begin{align} \label{tau2ppav_a}
  A(\permat) = \lp {\Cbb^g}/{\La{\permat}}, \E{\tau} \rp
 \end{align}
 Here $\Lambda(\permat)$ is the lattice of $\Cbb^g$ generated by the columns of the matrix $[\mathrm{Id}|\permat]$. These last ones yield a basis in $H_1({\Cbb^g}/{\La{\permat}},\Zbb)$. The polarization $E(\permat)$ is represented by the standard symplectic matrix $J$ defined in (\ref{intro:7einhalb}) with respect to it. The corresponding period matrix coincides with $\permat$. Viceversa, every $g$-dimensional ppav can be represented in the form (\ref{tau2ppav_a}) provided that $\permat$ is a period matrix of its. This last one belongs to $\siegel$. Period matrices of the same ppav are related by a modular transformation of the form (\ref{trasf_tau_first}) and two ppavs are isomorphic if and only if they have the same period matrices. The quotient
  \begin{align}
  \mathcal{A}_g = {\siegelg}/{\Sp{2g}{\Zbb}}
 \end{align}
 is the (rough) moduli space of ppavs of dimension $g$. \\
 The \emph{Jacobian} of a genus $g$ Riemann surface $\Gamma$, denoted with $\Jac(\Gamma)$, is the ppav given in the form (\ref{tau2ppav_a}) with $\permat$ a period matrix of $\Gamma$. Consequently, a Riemann surface and its Jacobian have the same period matrices. An alternative realization of $\Jac(\Gamma)$ is provided by the quotient 
 \begin{align} \label{def_Pic}
  J_0(\Gamma) = \mathrm{Div}_0(\Gamma)/\mathrm{Princ}(\Gamma)
 \end{align}
 of divisors of zero degree on $\Gamma$ with respect to the principal ones. The Abel map 
 \begin{subequations}
  \label{def_Abel_map}
  \begin{align}\label{def_Abel_map_a}
   \mathcal{A}: \mathrm{Div}_0(\Gamma) \longrightarrow \Cbb^g/\Lambda (\permat)
  \end{align}
  defined as follows
  \begin{align}\label{def_Abel_map_b}
   \mathcal{A}(D) = \ls\sum_{n=1}^N \int_{Q_n}^{P_n}\boldsymbol{\omega}\rs_{\Lambda(\permat)}, \quad\quad\quad D = \sum_{n=1}^NP_n - \sum_{n=1}^N Q_n \in \mathrm{Div}_0(\Gamma)
  \end{align}
  induces an isomorphism of groups 
  \begin{align}
   \mathcal{A}: J_0(\Gamma) \longrightarrow \Cbb^g/\Lambda (\permat).  
  \end{align}
 \end{subequations}
 The Torelli mapping 
 \begin{align}
  \mathcal{T}: \mathcal{M}_g \longrightarrow \mathcal{A}_g 
 \end{align}
 defined on the moduli space of Riemann surfaces and associating to each of these last ones its Jacobian is known, by the homonymous theorem \ref{thm_Torelli}, to be injective. The question to describe its image is the classical Schottky problem (see \cite{ky2012schottky} and references therein).

 Let us now consider the real framework (\cite{silhol1992compactifications}, \cite{Com155}, \cite{comessatti1926sulle}): a \emph{real ppav} is a pair $(A,S)$, where $A=(X,E)$ is a complex ppav, $S$ is an anti-holomorphic involution on $X$ such that $S(0)=0$ and the following compatibility condition with the polarization $E$ holds 
 \begin{align}
  E(S_{\star}(c_1), S_{\star}(c_2)) = -E(c_1,c_2), \quad\quad\quad c_1,c_2 \in H_1(X,\Zbb).
 \end{align} 
 A period matrix of a real ppav is said \emph{semi-real} if the underlying basis $\{a_1,a_2,\ldots a_g,b_1,b_2,\ldots,b_g\}$ in $H_1(X,\Zbb)$ satisfies the conditions
 \begin{align}
  S_{\star}(a_j) = a_j \quad\quad j=1,2,\ldots,g.
 \end{align}
 To every matrix $\permat$ of the semi-real Siegel upper-half space $\srsiegel$ there corresponds the real ppav
 \begin{subequations} \label{concrete_form}
  \begin{align} \label{concrete_form_a}
   (\A{\permat}, S_{conj})
  \end{align}
  where $\A{\permat}$ is the (complex) ppav associated to $\permat$ via (\ref{tau2ppav_a}) and $S_{conj}$ the anti-holomorphic involution on $\Cbb^g/\Lambda (\permat)$ induced by the standard complex conjugation on $\Cbb^g$:
  \begin{align}\label{concrete_form_b}
   S_{conj}: 
   [\zbf]_{\Lambda(\permat)} \longrightarrow [\overline{\zbf}]_{\Lambda(\permat)} 
   \quad\quad\quad [\zbf]_{\Lambda (\permat)} \in \Cbb^g/\Lambda(\permat).
  \end{align}
 \end{subequations}
 $\permat$ turns out to be a semi-real period matrix of (\ref{concrete_form_a}).
 Viceversa, every real ppav $(X,E,S)$ can be obtained in the form (\ref{concrete_form}) provided that $\permat$ is a semi-real period matrix of its. This last one belongs to $\srsiegel$. Semi-real period matrices of the same real ppav are related by a modular transformation of the form (\ref{azione_semireale}) and two real ppavs are isomorphic if and only if they have the same semi-real period matrices. The quotient 
 \begin{align}
  \mathcal{A}_{g}^{\Rbb}:=  \srsiegel/\srgroup
 \end{align}
 is the (rough) moduli space of real ppavs of dimension $g$. Summarizing, $\srsiegel$ and $\srgroup$ play in the real case the same role as $\Sp{2g}{\Zbb}$ in the complex one.

 \vspace{1em}

 In view of its invariance with respect to the modular action of $\srgroup$, the real type $(g,\lambda, \varepsilon)$ of a real ppav can be defined as the one of its semi-real period matrices. Moreover, a geometrical interpretation of it is possible: apart from the dimension $g$, the locus of real points of a real ppav has $2^{g-\lambda}$ connected components and $\varepsilon$ expresses some real properties of its polarization. Real ppavs of a given real type $(g,\lambda, \varepsilon)$ form a connected component of $\mathcal{A}_g^{\Rbb}$. Resorting to real period matrices, this last one can be realized as the quotient 
 \begin{align}
  \mathcal{A}_{g,\lambda,\varepsilon} := \realsiegel/\realgroup
 \end{align}

 \vspace{0.5em}

 The Jacobian of a real Riemann surface $(\Gamma,\sigma)$ inherits a real structure $S_{\Gamma}$. Indeed, $\sigma$ extends in a natural way to divisors of degree zero, to the quotient $J_0(\Gamma)$ and via the Abel map (\ref{def_Abel_map}) to $\Jac(\Gamma)$. The couple $(\Jac(\Gamma), S_{\Gamma})$ is a real ppav, to be expressed in the form (\ref{concrete_form}) with $\permat$ a semi-real period matrix of $(\Gamma,\sigma)$. In particular, a real Riemann surface and its real Jacobian have the same semi-real period matrices. \\
 In this framework, theorems \ref{theorem_all_poss} and \ref{theorem_impossible} above can be rephrased as follows: each component of $\mathcal{A}_g^{\Rbb}$ contains real Jacobians of a single topological type, with the only exception of $\mathcal{A}_{g,2g_0,1}$, to which real Jacobians of topological type 
 $(g,g+1-2g_0,1)$ as well as $(g,0,0)$ belong. A distinction based solely on the real type is in this last case not possible. Problem \ref{formulation_1}  formulated in the previous section is equivalent to the following
 \begin{formulation}[Classification of real Jacobians in the critical case]\label{formulation_2}
  For every integer $g\geq 2$ distinguish real period matrices of real Jacobians of topological type $(g,0,0)$ from the ones of real Jacobians of topological type $(g, g+1-2g_0,1)$.
 \end{formulation}
Notice that problem \ref{formulation_1} and \ref{formulation_2} above have the same solution but they are not the "same" problem. Indeed, given a real Jacobian, the challenge to effectively determine the corresponding real Riemann surface is non trivial and to the best of our knowledge still open.

 \subsection{Summary of results and methods}
 The formulation and the first results about the critical case of classification of real Jacobians are due to Comessatti, although the problem might have been known already to Klein, 
 at the end of the nineteenth century \cite{klein1893}. In \cite{Com155} he provides a solution to the case of genus two in terms of the determinant of the 
 imaginary part of real period matrices. An alternative proof of this result based on different instruments is available in \cite{silhol1992compactifications}. \\
 The question is closely related to other research in real algebraic geometry,
 for example the one of \cite{lattarulo2003schottky}, \cite{silhol2001schottky} and \cite{bochnak1997morphisms}. Nevertheless, to the best of our knowledge, 
 no significant progress has been made since the time of Comessatti for genus higher than two. In this article, we recast his results in terms of theta-constants 
 and extend them to all genera. \\
 \vspace{0.25em}
 
 Recall that, for every integer $g\geq 1$, the $g$-dimensional theta function
 \begin{subequations}
  \label{def_theta}
  \begin{align} \label{def_theta_a}
   \Theta\chars{\bsalpha}{\bsbeta}: \Cbb^g\times \siegel \longrightarrow \Cbb
  \end{align}
  with integral\footnote{In general, characteristics can take real values but only integral ones are relevant for our purposes.} characteristics $\bsalpha,\bsbeta\in \Zbb^g$ is defined as follows
  \begin{align} \label{def_theta_b}
   \Thefunccar{\zbf}{\permat}{\bsalpha}{\bsbeta} &= \sum_{\mbf \in \Zbb^g} 
   \exp\ls \pi i \tp{\lp\mbf+\nicefrac{\bsalpha}{2}\rp} \permat \lp\mbf +\nicefrac{\bsalpha}{2}\rp + 2\pi i \tp{\lp\zbf+\nicefrac{\bsbeta}{2}\rp}\lp\mbf + \nicefrac{\bsalpha}{2}\rp \rs, \quad\quad 
   \zbf\in \Cbb^g, \permat \in \siegel.
  \end{align}
  As usual, we will put
  \begin{align} \label{def_theta_c}
   \Thefunc{\zbf}{\permat} = \Thefunccar{\zbf}{\permat}{\mathrm{0}}{\mathrm{0}}, \quad\quad\quad \zbf \in \Cbb^g, \permat \in \siegel.
  \end{align}
 \end{subequations}
The action 
\begin{subequations}
\label{def_afrak}
\begin{align}\label{def_afrak_a}
 \afrak: \Sp{2g}{\Zbb}\times \Zbb^{2g} \longrightarrow \Zbb^{2g}
\end{align}
of the modular group on the set of theta-characteristics is defined as follows:
\begin{align} \label{def_afrak_b}
 \afrak \lp 
 \ls\begin{smallmatrix} A & B \\ C & D \end{smallmatrix} \rs,
 \ls \begin{smallmatrix} \bsalpha \\ \bsbeta \end{smallmatrix}\rs
 \rp
 =
 \ls\begin{smallmatrix} D & -C \\ -B & A \end{smallmatrix} \rs
 \ls\begin{smallmatrix} \bsalpha \\ \bsbeta \end{smallmatrix}\rs
 +
 \ls\begin{smallmatrix}
 \diag(C\tp{D})\\
 \diag(A\tp{B})
 \end{smallmatrix}\rs,
 \quad \quad \quad  
 \ls\begin{smallmatrix}
     \bsalpha \\ \bsbeta
 \end{smallmatrix}\rs \in \Zbb^{2g}, \,\, \,
 \ls\begin{smallmatrix}
 A & B \\ C & D
 \end{smallmatrix}\rs \in \Sp{2g}{\Zbb}.
\end{align}
\end{subequations}
Theta-characteristics are said \emph{even} (resp. \emph{odd}) if the product $\tp{\boldsymbol{\alpha}}\cdot \boldsymbol{\beta}$ is even (resp. odd). This property is preserved by the action $\mathfrak{A}$ of $\Sp{2g}{\Zbb}$. For a transformation of the arguments 
\begin{subequations}
 \label{trasf_theta}
 \begin{align}\label{trasf_theta_a}
  \tilde{\permat} = \mfrak(G,\permat), \quad 
  \tilde{\zbf} = [\tp{(C\permat+D)}]^{-1}\zbf, \quad
  \chars{\tilde{\bsalpha}}{\tilde{\bsbeta}} =
  \afrak\lp G, \chars{\bsalpha}{\bsbeta}\rp,
  \quad\quad\quad
  G = \ls\begin{smallmatrix} A & B \\ C & D \end{smallmatrix} \rs\in \Sp{2g}{\Zbb},
 \end{align}
 the value of the theta function changes as follows 
 \begin{align} \label{trasf_theta_b}
  \Thefunccar{\tilde{\zbf}}{\tilde{\permat}}{\tilde{\bsalpha}}{\tilde{\bsbeta}}
  =
  \kappa
  [\det(C\permat+D)]^{\nicefrac{1}{2}}
  \exp\lb \frac{1}{2}\sum_{i\leq j}z_iz_j\frac{\partial \log\det(C\permat+D)}{\partial \permatent_{ij}} \rb
  \Thefunccar{\zbf}{\permat}{\bsalpha}{\bsbeta}  
 \end{align}
 where $\kappa$ is understood to be a nonvanishing complex number independent of $\permat$ and $\zbf$.
\end{subequations}
In the present article, a fundamental role will be played by theta-constants 
\begin{align}\label{def_theta_const}
 \Theconst{\permat}{\bsalpha}{\bsbeta} = \Thefunccar{\mathbf{0}}{\permat}{\bsalpha}{\bsbeta} \quad\quad\quad \permat \in \realsiegel
\end{align}
with integer characteristics $\bsalpha, \bsbeta \in \Zbb^g$.\\
\vspace{0.5em}
 For any real type $(g,\lambda,\epsilon)$ satisfying constraints (\ref{tttt}), let us introduce the sets
 \begin{subequations}\label{def_set_oe}
  \begin{align} 
   \setcounter{equation}{14} \label{def_set_o}
   \seto &= \lb [\bsbeta]_2 \in \Zbb_2^g \mid \tp{\bsbeta}\matm \bsbeta\equiv 2 \mod 4; \,\,\,  \beta_{\lambda+1}\equiv \beta_{\lambda+2}\equiv \ldots \equiv \beta_g \equiv 0 \mod 2 \rb\\
   \setcounter{equation}{4} \label{def_set_e}
   \sete &= \lb [\bsbeta]_2 \in \Zbb_2^g \mid \tp{\bsbeta}\matm \bsbeta\equiv 0 \mod 4; \,\,\,  \beta_{\lambda+1}\equiv \beta_{\lambda+2}\equiv \ldots \equiv \beta_g \equiv 0 \mod 2 \rb
  \end{align}
 \end{subequations}
 Correspondingly, for every $\permat$ in $\realsiegel$, let us consider the indexed families\footnote{
 Formally, $\Ocal(\permat)$ can be thought of as a function 
 \begin{align}
  \seto \longrightarrow \Rbb
 \end{align}
 associating to every $[\bsbeta]_2$ in $\seto$ the value of the theta-constant $\Theconst{\permat}{\mathrm{0}}{\bsbeta}$. An analogous definition holds for $\Ecal(\permat)$.}
 \begin{align} \label{def_o_tau}
  \Ocal (\permat) = \lb\Thefunccar{\mathrm{0}}{\permat}{\mathrm{0}}{\bsbeta}\rb_{[\bsbeta]_2\in \seto},
  \quad\quad \quad \quad\quad
  \Ecal (\permat) = \lb\Thefunccar{\mathrm{0}}{\permat}{\mathrm{0}}{\bsbeta}\rb_{[\bsbeta]_2\in \sete}.
 \end{align} 
 Notice that no ambiguity is contained in this definition, due to the well-known identity
 \begin{align}\label{form_segno_theta_char}
   \Thefunccar{\zbf}{\permat}{\bsalpha + 2\mathbf{d}}{\bsbeta + 2\mathbf{f}} = \exp(\pi i \tp{\mathbf{f}}\bsalpha) \Thefunccar{\zbf}{\permat}{\bsalpha}{\bsbeta}, 
 \end{align}
 for all $\permat\in \siegel$; $\zbf \in \Cbb^g$; $\bsalpha,\bsbeta \in \Rbb^g$;  $\mathbf{d},\mathbf{f} \in \Zbb^n$. This last one guarantees that the theta-constants 
 are independent of the particular representative $\bsbeta$ of the class $[\bsbeta]_2$. The value of the triple $(g,\lambda, \varepsilon)$ on the r.h.s. of 
 (\ref{def_o_tau}) is understood to be the real type of the real Riemann matrix $\permat$ on the l.h.s..

 \vspace{1em}
 
  The first part of this article contains results about real ppavs, mostly of orthosymmetric type, employed in the solution of the critical case of classification of real Jacobians and also relevant by themselves. Our first result is the following

 \setcounter{imptheorem}{1}
 
 \begin{impsubtheorem}[See Thm. \ref{itc:theorem_100} and Lemma      \ref{rvt:lemma_1}] \label{theorem_1}
  Fix an orthosymmetric real type $(g,\lambda,1)$. Let $\permat$ be a matrix of the real Siegel upper-half space $\realsiegelnow{g}{\lambda}{1}$. The elements of the families $\Ocal(\permat)$ and $\Ecal(\permat)$ are all real. Moreover, let $\tilde{\permat}$ be another matrix of $\realsiegelnow{g}{\lambda}{1}$ such that
  \begin{align}
   \tilde{\permat} = \mfrak(G,\permat) \quad\quad \mbox{for some } \quad G \in \realgroupnow{g}{\lambda}{1}. 
  \end{align}
  Then 
  \begin{align}
   \quad\quad\quad\quad
   \quad
   \Ocal(\tilde{\permat}) = \Ocal(\permat) 
   \quad \mbox{and} \quad
   \Ecal(\tilde{\permat}) = \Ecal(\permat)
   \quad\quad 
   \mbox{up to permutations of the indexes}.
  \end{align}
  Consequently, the elements of $\Ocal(\permat)$ and $\Ecal(\permat)$ respectively constitute multi-dimensional, real quantities associated to the real ppav $(A(\permat), S_{conj})$ defined in (\ref{concrete_form}), invariant - up to permutations - per real isomorphisms. 

 \end{impsubtheorem}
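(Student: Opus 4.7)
The plan is to handle the two claims in turn: first the reality of every theta-constant appearing in $\Ocal(\permat)$ and $\Ecal(\permat)$, then the invariance (up to a permutation of indexes) of these families under the action of $\realgroupnow{g}{\lambda}{1}$.

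\textbf{Reality.} Writing $\permat = \tfrac{1}{2}\matm + iT$ and substituting into (\ref{def_theta_b}) at $\zbf = \mathrm{0}$, $\bsalpha = \mathrm{0}$ yields
\begin{align*}
 \Theconst{\permat}{\mathrm{0}}{\bsbeta} = \sum_{\mbf \in \Zbb^g} \exp\ls \tfrac{\pi i}{2}\tp{\mbf}\matm\mbf + \pi i \tp{\bsbeta}\mbf - \pi \tp{\mbf}T\mbf \rs.
\end{align*}
Since $\varepsilon = 1$, the standard form (\ref{intro:14}) has zero diagonal, so $\tp{\mbf}\matm\mbf = 2\sum_{j<k} M_{jk}m_jm_k$ is even for every $\mbf \in \Zbb^g$. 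Complex-conjugating the series and re-indexing by $\mbf \mapsto -\mbf$ returns the same sum: the damping factor $e^{-\pi\tp{\mbf}T\mbf}$ is unchanged, the linear phase $e^{\pi i\tp{\bsbeta}\mbf}$ flips twice (once by conjugation, once by substitution), and the quadratic phase $e^{(\pi i / 2)\tp{\mbf}\matm\mbf}$ coincides with its conjugate because its argument lies in $\pi\Zbb$. Thus every $\Theconst{\permat}{\mathrm{0}}{\bsbeta}$ is real, and in particular so are the elements of $\Ocal(\permat)$ and $\Ecal(\permat)$.

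\textbf{Invariance: value equality.} For $G = \ls\begin{smallmatrix}A & B \\ \mathrm{0} & (\tp A)^{-1}\end{smallmatrix}\rs \in \realgroupnow{g}{\lambda}{1}$, the characteristic map (\ref{def_afrak}) simplifies to $\phi([\bsbeta]_2) = [A\bsbeta + \diag(A\tp B)]_2$ on $\Zbb_2^g$, and the transformation law (\ref{trasf_theta_b}) applied at $\zbf = \mathrm{0}$ with $C = 0$ collapses to
\begin{align*}
 \Theconst{\tilde{\permat}}{\mathrm{0}}{\phi(\bsbeta)} = \mu \, \Theconst{\permat}{\mathrm{0}}{\bsbeta},
\end{align*}
for a scalar $\mu = \kappa \, [\det(\tp A)^{-1}]^{1/2}$ that is an eighth root of unity. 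Reality of both sides (previous paragraph) forces $\mu \in \{\pm 1\}$, and the limit as the smallest eigenvalue of $T$ diverges — in which both sides collapse to their $\mbf = \mathrm{0}$ term, equal to $1$ — pins $\mu = 1$.

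\textbf{Invariance: bijectivity on $\seto$ and $\sete$.} It remains to show $\phi$ restricts to a bijection on each of the two sets. Injectivity on $\Zbb_2^g$ is immediate since $A$ has $\det = \pm 1$ and is thus invertible mod $2$. For the image property, two congruences must be verified: (i) if $\beta_j \equiv 0 \mod 2$ for $j > \lambda$, then $(A\bsbeta + \diag(A\tp B))_j \equiv 0 \mod 2$ for $j > \lambda$; and (ii) $\tp{\phi(\bsbeta)}\matm\phi(\bsbeta) \equiv \tp{\bsbeta}\matm\bsbeta \mod 4$. Claim (i) follows from the block decomposition of $A$ imposed by $A\matm\tp A \equiv \matm \mod 2$ — computing the four blocks of $A\matm\tp A$ against those of $\matm$ forces the lower-left block of $A$ to be even mod $2$ — together with the vanishing mod $2$ of the last $g-\lambda$ entries of $\diag((\matm - A\matm\tp A)/2)$. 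Claim (ii) is the main obstacle: expanding the left-hand side yields
\begin{align*}
 \tp{\bsbeta}(\tp A\,\matm\, A)\bsbeta + 2\,\tp{\diag(A\tp B)}\matm A\bsbeta + \tp{\diag(A\tp B)}\matm\diag(A\tp B),
\end{align*}
and I would reduce this modulo $4$ using $A\tp B = (\matm - A\matm\tp A)/2$, the symplectic relation $A\tp B = B\tp A$, and the elementary congruence $\tp{\mathbf{v}} N\mathbf{v} \equiv \tp{\mathbf{v}}\diag(N) \mod 2$ valid for any symmetric integer matrix $N$. Tracking modulo $4$ the interplay between this diagonal correction, the symplectic condition, and the zero-diagonal of the orthosymmetric $\matm$ is where the real technical effort lies.
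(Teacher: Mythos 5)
Your reality argument and your derivation of the value identity $\Theconst{\tilde{\permat}}{\mathrm{0}}{\afrak(G,\bsbeta)}=\Theconst{\permat}{\mathrm{0}}{\bsbeta}$ are both sound. For the latter you take a different route from the paper: you invoke the general transformation law (\ref{trasf_theta_b}) with $C=0$ and pin down the residual eighth root of unity by reality plus the degeneration $T\to\infty$ (where both theta constants tend to their $\mbf=\mathbf{0}$ term, equal to $1$, while the ratio stays constant). That is a legitimate and rather slick normalization trick; the paper instead avoids the undetermined constant altogether by substituting $\tilde{\permat}=A\permat\tp{A}+\nicefrac{1}{2}(M-AM\tp{A})$ directly into the series and re-indexing $\mbf\mapsto\tp{A}\mbf$, which reduces everything to the single congruence $\tp{\mbf}(M-AM\tp{A})\mbf+\tp{\mbf}\diag(M-AM\tp{A})\equiv 0 \bmod 4$.

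The genuine gap is in the set-invariance step, and it sits exactly where you say "the real technical effort lies": the congruence $\tp{\phi(\bsbeta)}\matmnow{g}{\lambda}{1}\phi(\bsbeta)\equiv\tp{\bsbeta}\matmnow{g}{\lambda}{1}\bsbeta \bmod 4$ is asserted, expanded, and then deferred. This congruence is the arithmetic core of the whole theorem, and it does not follow formally from the three ingredients you list. After writing $\tp{A}\matm A=\matm+2Y$ and applying $\tp{\mathbf{v}}Y\mathbf{v}\equiv\tp{\mathbf{v}}\diag(Y)\bmod 2$, what remains to be shown is an identity of the shape $\diag(\tp{A}\matm A)\equiv\tp{A}\matm\,\diag(A\matm\tp{A})\pmod 4$ together with a matching of the constant term $\tp{\mathbf{d}}\matm\mathbf{d}$ — i.e.\ precisely the combinatorial fact that makes the affine shift in $\afrak$ the "right" one. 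The paper does not attempt this computation directly: its Lemma \ref{itc:lemma_1} first reduces (via Lemmas \ref{itc:lemma_20} and \ref{itc:lemma_30}) to the nondegenerate block $\matmnow{\lambda}{\lambda}{1}=\smallmat{0 & \mathrm{Id}\\ \mathrm{Id} & 0}$, then lifts $A \bmod 2$ to an honest element of $\Sp{\lambda}{\Zbb}$ by the Newman--Smart theorem, observes that $\tp{\bsbeta}\matmnow{\lambda}{\lambda}{1}\bsbeta=2\tp{\bsalpha_0}\bsbeta_0$ identifies the quantity mod $4$ with the classical parity of the theta characteristic $\chars{\bsalpha_0}{\bsbeta_0}$, and quotes the classical invariance of that parity under the symplectic action. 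Until you either reproduce that reduction or actually carry out the mod-$4$ bookkeeping you outline (which is possible but is itself essentially a proof of the classical parity-invariance), the proof is incomplete. A secondary point folded into the same gap: for $\lambda<g$ your expansion uses $\matm$ which is singular mod $2$, so even the preliminary step $\tp{A}\matm A\equiv\matm\bmod 2$ needs the block reduction before the quadratic-form argument can be run on the invertible $\lambda\times\lambda$ part.
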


 To the best of our knowledge, these invariants are new in the literature and for the first time here recognised as such. Their relevance goes beyond the classification of 
 real Jacobians. For example, they provide an instrument to distinguish non-isomorphic real ppavs of the same orthosymmetric type. This clarifies our motivation to 
 define $\Ocal(\permat)$ and $\Ecal(\permat)$ not simply as sets but as indexed families: in this way, a trace is kept of possible repetitions among the values of
 their entries, potentially crucial for their optimal employment.  
  
 The first part of the proof of theorem \ref{theorem_1} consists in providing a simpler version of the transformation law (\ref{trasf_theta}) of the theta function for the real framework. We will restrict to characteristics of the form $\tp{[\tp{\mathbf{0}},\tp{\boldsymbol{\beta}}]}$. Notice that the set of these last ones is invariant for the action of $\afrak$. With the obvious identification 
\begin{align}
 \chars{\mathbf{0}}{\bsbeta}\longleftrightarrow \bsbeta \quad\quad\quad \bsbeta\in \Zbb^g
\end{align}
and considering classes modulo 2 of these vectors, $\afrak$ reduces to an action 
\begin{subequations}
\label{def_afrak_rid}
 \begin{align}
  \afrak: \realgroup \times \Zbb_2^g \longrightarrow \Zbb_2^g
 \end{align}
 given explicitely by
 \begin{align}
  \afrak (G, [\bsbeta]_2) = [A\bsbeta - \nicefrac{1}{2}\diag(M-AM\tp{A})]_2,
  \quad\quad
  G = \ls  \begin{smallmatrix} 
  A & \nicefrac{1}{2}(M-AM\tp A)(\tp A)^{-1} \\ 
  0 & (\tp A)^{-1} 
  \end{smallmatrix} \rs.
 \end{align}
\end{subequations}
\begin{impsubsubtheorem}[See Thm. \ref{itc:prop_20}] 
 \label{th_inv_theta_val}
 Let $[\bsbeta]_2$, $[\tilde{\bsbeta}]_2$ be vectors of $\Zbb_2^g$ and $\permat$, $\tilde{\permat}$ be matrices of some real Siegel upper-half plane $\realsiegel$ 
 of dimension $g$. Assume that 
\begin{align}
 \tilde{\permat} = \mfrak(G, \permat) 
 \quad \mbox{and} \quad
 [\tilde{\bsbeta}]_2 = \afrak (G,[\bsbeta]_2)
 \quad\quad\mbox{for some} \quad
 G \in \realgroup.
\end{align}
Then
\begin{align}
 \Theconst{\tilde{\permat}}{\mathrm{0}}{\tilde{\bsbeta}}
 =
 \Theconst{\permat}{\mathrm{0}}{\bsbeta}.
\end{align}
\end{impsubsubtheorem}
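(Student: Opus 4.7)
The plan is to avoid invoking the full transformation law (\ref{trasf_theta}), whose prefactor $\kappa\,[\det(C\permat+D)]^{1/2}$ is delicate to pin down in general, and instead manipulate the defining series (\ref{def_theta_b}) directly. This is feasible because the parametrization (\ref{def_real_mod_group}) of $\realgroup$ forces $C=0$, so the sum admits a clean reindexing under the unimodular change of variables induced by $A$.

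Concretely, I fix $G = \smallmat{A & B \\ 0 & (\tp{A})^{-1}} \in \realgroup$ and introduce $K := \nicefrac{1}{2}(M - AM\tp{A})$, which by the congruence condition $M \equiv AM\tp{A}\pmod 2$ in (\ref{def_real_mod_group}) is a symmetric \emph{integer} matrix. Equation (\ref{azione_semireale_b}) then reads $\tilde{\permat} = A\permat\tp{A} + K$, while (\ref{def_afrak_rid}) together with the shift identity (\ref{form_segno_theta_char}) applied for $\bsalpha = 0$ allows me to take $\tilde{\bsbeta} = A\bsbeta + \diag(K)$ as an exact equality in $\Zbb^g$, rather than only modulo $2$. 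Expanding $\Theconst{\tilde{\permat}}{\mathrm{0}}{\tilde{\bsbeta}}$ via (\ref{def_theta_b}) with summation index $\nbf \in \Zbb^g$ and performing the substitution $\mbf := \tp{A}\nbf$, which is a bijection of $\Zbb^g$ onto itself since $A \in \GL{g}{\Zbb}$, the exponent of the $\nbf$-th summand becomes
\begin{align*}
 \pi i \tp{\nbf}\tilde{\permat}\nbf + \pi i \tp{\tilde{\bsbeta}}\nbf
 = \pi i \tp{\mbf}\permat\mbf + \pi i \tp{\bsbeta}\mbf + \pi i \bigl( \tp{\nbf}K\nbf + \tp{\diag(K)}\nbf \bigr).
\end{align*}

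The first two terms on the right-hand side reproduce exactly the exponent of the $\mbf$-th summand of $\Theconst{\permat}{\mathrm{0}}{\bsbeta}$, so the claim reduces to checking that the bracketed expression is an even integer for every $\nbf \in \Zbb^g$. This is immediate from the elementary congruence $\tp{\nbf}K\nbf \equiv \tp{\diag(K)}\nbf \pmod 2$, valid for any symmetric integer matrix $K$: expand $\tp{\nbf}K\nbf = \sum_i K_{ii}n_i^2 + 2\sum_{i<j}K_{ij}n_in_j$ and use $n_i^2 \equiv n_i \pmod 2$.

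The only genuine obstacle is the bookkeeping at the outset: one must use (\ref{form_segno_theta_char}) to upgrade the mod-$2$ identity for $\tilde{\bsbeta}$ to an equality in $\Zbb^g$ with the specific representative $A\bsbeta + \diag(K)$, and one must ensure that $K$ is integral — which is precisely the role of the congruence $M \equiv AM\tp{A} \pmod 2$ encoded in the definition (\ref{def_real_mod_group}) of $\realgroup$. Without that condition the cross-terms would not collapse modulo $2$ and the argument would fail.
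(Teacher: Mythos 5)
Your argument is correct and is essentially the paper's own proof: the paper likewise uses \eqref{form_segno_theta_char} to fix the representative of the transformed characteristic, expands the series, reindexes by $\mbf\mapsto\tp{A}\mbf$, and disposes of the correction term via the congruence $\tp{\mbf}(M-AM\tp{A})\mbf+\tp{\mbf}\diag(M-AM\tp{A})\equiv 0 \bmod 4$, which after dividing by two is exactly your parity identity for the integer symmetric matrix $K$. The only cosmetic difference is that the paper carries a general argument $\zbf$ (proving $\Thefunccar{A\zbf}{\tilde{\permat}}{\mathrm{0}}{\tilde{\bsbeta}}=\Thefunccar{\zbf}{\permat}{\mathrm{0}}{\bsbeta}$) and specializes to $\zbf=\mathbf{0}$, whereas you work with the theta constant directly.
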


Next, we restrict to orthosymmetric types and prove a property of the action $\afrak$:
\begin{impsubsubtheorem}[See Thm. \ref{itc:theorem_1}]
 \label{th_inv_theta_char}
 Let $(g,\lambda, 1)$ be an orthosymmetric real type. The set $\setonow{g}{\lambda}{1}$ is invariant for the action 
 $\afrak$ of $\realgroupnow{g}{\lambda}{1}$ on $\Zbb_2^g$ defined in (\ref{def_afrak_rid}). The same is true for $\setenow{g}{\lambda}{1}$. 
\end{impsubsubtheorem}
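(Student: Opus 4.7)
The plan is to show that the action $\afrak$ preserves two conditions separately: the parity condition $\beta_{\lambda+1}\equiv\cdots\equiv\beta_g\equiv 0\pmod 2$ (common to both $\setonow{g}{\lambda}{1}$ and $\setenow{g}{\lambda}{1}$), and the value of $\tp{\bsbeta}M\bsbeta\pmod 4$ (which equals $2$ on $\setonow{g}{\lambda}{1}$ and $0$ on $\setenow{g}{\lambda}{1}$). Decomposing $A$ into blocks $\smallmat{A_{11} & A_{12} \\ A_{21} & A_{22}}$ compatible with $M = \smallmat{M_1 & 0 \\ 0 & 0}$, where $M_1 = \smallmat{0 & \mathrm{Id}_{\lambda_0} \\ \mathrm{Id}_{\lambda_0} & 0}$, the orthosymmetric condition $M\equiv AM\tp{A}\pmod 2$ forces $A_{11}M_1\tp{A_{11}}\equiv M_1$ and $A_{11}M_1\tp{A_{21}}\equiv 0\pmod 2$. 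Since $M_1^2 = \mathrm{Id}_\lambda$, the first relation forces $A_{11}$ invertible mod $2$, and combined with the second this forces $A_{21}\equiv 0\pmod 2$. Using this together with $\bsbeta^{(2)}\equiv 0\pmod 2$ and the fact that the correction $\nicefrac12(AM\tp{A})_{jj}$ for $j>\lambda$ equals $(A_{21})_{j,1:\lambda_0}\cdot(A_{21})_{j,\lambda_0+1:\lambda}$ (which is even since $A_{21}$ is), a short computation yields $\tilde{\beta}_j\equiv 0\pmod 2$ for all $j>\lambda$, settling the first invariance.

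For the mod-$4$ invariance of $\tp{\bsbeta}M\bsbeta$, expand
\[
 \tp{\tilde{\bsbeta}}M\tilde{\bsbeta} - \tp{\bsbeta}M\bsbeta = \tp{\bsbeta}(\tp{A}MA - M)\bsbeta - 2\tp{\mathbf{d}}MA\bsbeta + \tp{\mathbf{d}}M\mathbf{d},
\]
with $\mathbf{d} = \nicefrac12\diag(M - AM\tp{A})$, and reduce each summand modulo $4$ using the block structure and $\bsbeta^{(2)}\equiv 0\pmod 2$. Writing $\bsbeta^{(1)} = (p,q)$ with $p,q\in\Zbb^{\lambda_0}$, one has $\tp{\bsbeta}M\bsbeta = 2\,p\cdot q = 2\,Q(\bsbeta^{(1)})$, where $Q(p,q) = p\cdot q$ is the hyperbolic quadratic form on $\Zbb_2^\lambda$ with associated bilinear form $M_1$. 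Introducing the $\Zbb_2^\lambda$-vectors $\mathbf{e}_A,\mathbf{q}_A$ with entries $(\mathbf{e}_A)_j = Q(\mathrm{row}_j A_{11})$ and $(\mathbf{q}_A)_i = Q(\mathrm{col}_i A_{11})$, the displayed difference collapses to
\[
 \tp{\tilde{\bsbeta}}M\tilde{\bsbeta} - \tp{\bsbeta}M\bsbeta \equiv 2\bigl(\tp{\mathbf{q}_A} + \tp{\mathbf{e}_A}M_1 A_{11}\bigr)\bsbeta^{(1)} + 2\,Q(\mathbf{e}_A)\pmod 4.
\]
Hence it suffices to establish the two mod-$2$ identities $\mathbf{q}_A \equiv \tp{A_{11}}M_1\mathbf{e}_A$ and $Q(\mathbf{e}_A)\equiv 0$.

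The main obstacle is proving these two combinatorial identities, and I would do so conceptually by showing that $A_{11}\mapsto(A_{11},\mathbf{e}_A)$ defines a group homomorphism $O(M_1,\Zbb_2)\to\mathrm{Aff}(\Zbb_2^\lambda)$ landing in the subgroup $\mathrm{Aff}(Q,\Zbb_2^\lambda)$ of affine $Q$-isometries. The cocycle relation $\mathbf{e}_{A'A_{11}} = A'\mathbf{e}_{A_{11}} + \mathbf{e}_{A'}$, which supplies the homomorphism property, follows from expanding $Q(\mathrm{row}_j(A'A_{11}))$ via the general formula $Q(\sum_k c_k v_k) = \sum_k c_k Q(v_k) + \sum_{k<l}c_k c_l B(v_k,v_l)$ valid over $\Zbb_2$, together with $A_{11}M_1\tp{A_{11}}\equiv M_1\pmod 2$. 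That the homomorphism lands in $\mathrm{Aff}(Q,\Zbb_2^\lambda)$ can be verified on generators: since $O(M_1,\Zbb_2)$ is generated by the transvections $\tau_v\colon x\mapsto x+(\tp{v}M_1 x)v$ for $v\neq 0$, a case analysis on the value of $Q(v)$ (yielding $\mathbf{e}_{\tau_v} = 0$ when $Q(v) = 1$ and $\mathbf{e}_{\tau_v} = v$ when $Q(v) = 0$) confirms that $Q(\tau_v(x)+\mathbf{e}_{\tau_v}) = Q(x)$ in both cases. Evaluating the resulting identity $Q(A_{11}\bsbeta + \mathbf{e}_A) = Q(\bsbeta)$ at $\bsbeta = 0$ gives $Q(\mathbf{e}_A) = 0$, and matching the linear terms in $\bsbeta^{(1)}$ gives $\mathbf{q}_A = \tp{A_{11}}M_1\mathbf{e}_A$. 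Consequently both $\setonow{g}{\lambda}{1}$ and $\setenow{g}{\lambda}{1}$ are invariant under $\afrak$.
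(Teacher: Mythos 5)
Your proof is correct, and its key arithmetic step takes a genuinely different route from the paper's. The block-reduction part is essentially the same in both treatments: the paper's Lemma \ref{itc:lemma_20} and Lemma \ref{itc:lemma_30} establish exactly your facts that the lower-left block of $A$ is even, that the last $g-\lambda$ entries of $\tilde{\bsbeta}$ stay even, and that the quantity $\tp{\bsbeta}\matmnow{g}{\lambda}{1}\bsbeta \bmod 4$ collapses onto the leading $\lambda\times\lambda$ block. The divergence is in the core lemma asserting that $\tp{\bsbeta}\matmnow{\lambda}{\lambda}{1}\bsbeta \bmod 4$ is preserved. The paper (Lemma \ref{itc:lemma_1}) lifts $A_{11}$ modulo $2$ to a genuine integral symplectic matrix via the Newman--Smart theorem, recognizes the map $\bsbeta \mapsto A\bsbeta - \nicefrac{1}{2}\diag(AM\tp{A})$ as the classical action $\afrak(-JXJ,\cdot)$ of $\Sp{\lambda}{\Zbb}$ on theta characteristics, and then quotes the classical invariance of the parity $\tp{\bsalpha_0}\bsbeta_0 \bmod 2$. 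You instead work entirely over $\Zbb_2$: you interpret $\nicefrac{1}{2}\tp{\bsbeta}M_1\bsbeta$ as the hyperbolic quadratic form $Q$ polarizing the alternating form $M_1=\matmnow{\lambda}{\lambda}{1}$, observe that $A_{11}\mapsto(A_{11},\mathbf{e}_A)$ is a homomorphism into the affine group (the cocycle relation using $A_{11}M_1\tp{A_{11}}\equiv M_1 \bmod 2$), and verify on symplectic transvections --- which generate $\Sp{\lambda}{\Zbb_2}$ --- that the image preserves $Q$; in effect you re-prove the classical parity-invariance from scratch. Your collapse formula and the two resulting identities $\mathbf{q}_A\equiv\tp{A_{11}}M_1\mathbf{e}_A$ and $Q(\mathbf{e}_A)\equiv 0$ check out (they make the difference congruent to $2\cdot 2\,\tp{\mathbf{e}_A}M_1A_{11}\bsbeta^{(1)}\equiv 0 \bmod 4$), as do the transvection computations $\mathbf{e}_{\tau_v}=(Q(v)+1)v$. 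Your route is self-contained and avoids the lifting theorem and the cited parity fact, at the cost of invoking generation of $\Sp{\lambda}{\Zbb_2}$ by transvections, a standard but nontrivial input; the paper's route is shorter on the page but outsources the substance to two external classical results.
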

This result is a generalization of the classical invariance of the set of odd (resp. even) characteristics for the action $\afrak$ of the modular group $\Sp{2g_0}{\Zbb}$ 
defined in (\ref{def_afrak}).  In particular, the elements of 
$\setonow{g}{\lambda}{1}$ (resp. $\setenow{g}{\lambda}{1}$) can be considered as the real, orthosymmetric analogue of the former ones. Theorem \ref{theorem_1} follows easily from a 
combination of theorems \ref{th_inv_theta_val} and \ref{th_inv_theta_char}. As a next step, we investigate the sign of real theta functions. 
\begin{impsubtheorem}[See Theorem \ref{rvt:thm_1}]
 \label{intro:thm_last}
 Let $(g,\lambda,1)$ be an admissible, orthosymmetric real type, with $\lambda$ (even,) equal to $2\lambda_0$ and strictly positive. Introduce the set
 \begin{align} \label{def_T_cal}
  \settnow{g}{\lambda}{1} =
  \lb
   [\bsbeta]_2 \in \setenow{g}{\lambda}{1} \,\, | \,\, \beta_j\cdot \beta_{j+\lambda_0}\equiv 0 \,\,\,\mathrm{mod} \,\,\, 2, j=1,2,\ldots,\lambda_0
  \rb.
 \end{align}
 Let $\permat$ be a matrix of the corresponding Siegel upper-half space $\realsiegelnow{g}{\lambda}{1}$. One has 
 \begin{align} \label{ineq_theta_intro}
  \sum_{[\bsbeta]_2\in \settnow{g}{\lambda}{1}} \Theconst{\tau}{0}{\bsbeta} > 0.
 \end{align}
 In particular, at least one element of the indexed family $\Ecal(\permat)$ is strictly positive in this case.
\end{impsubtheorem}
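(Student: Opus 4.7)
The plan is to exploit the very simple shape of the standard matrix $\matm$ in the orthosymmetric case and interchange the order of summation. Writing $\permat = \nicefrac{1}{2}\matm + iT$ with $T$ real, symmetric and positive definite, and using that $\tp{\mbf}\matm\mbf = 2\sum_{j=1}^{\lambda_0}m_jm_{j+\lambda_0}$, each theta-constant takes the form
\begin{align*}
\Theconst{\permat}{\mathrm{0}}{\bsbeta} = \sum_{\mbf\in\Zbb^g} e^{-\pi\tp{\mbf}T\mbf}\,\prod_{j=1}^{\lambda_0}(-1)^{m_jm_{j+\lambda_0}}\cdot(-1)^{\tp{\bsbeta}\mbf}.
\end{align*}
Absolute convergence of the series (which is granted since $T$ is positive definite) permits me to exchange the sum over $\mbf$ with the finite sum over $[\bsbeta]_2\in\settnow{g}{\lambda}{1}$.

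I next compute the character sum over $\settnow{g}{\lambda}{1}$. The condition $\beta_{\lambda+1}\equiv\cdots\equiv\beta_g\equiv 0$ freezes the last $g-\lambda$ coordinates modulo $2$, while the conditions $\beta_j\beta_{j+\lambda_0}\equiv 0\,\mathrm{mod}\,2$ make the pairs $(\beta_j,\beta_{j+\lambda_0})$ range independently over $\lb(0,0),(0,1),(1,0)\rb$ for $j=1,\ldots,\lambda_0$; in particular $\settnow{g}{\lambda}{1}\subset\setenow{g}{\lambda}{1}$ since $\tp{\bsbeta}\matm\bsbeta=2\sum_j\beta_j\beta_{j+\lambda_0}\equiv 0\,\mathrm{mod}\,4$. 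The character sum therefore factors as
\begin{align*}
\sum_{[\bsbeta]_2\in\settnow{g}{\lambda}{1}}(-1)^{\tp{\bsbeta}\mbf} = \prod_{j=1}^{\lambda_0}\lp 1+(-1)^{m_j}+(-1)^{m_{j+\lambda_0}}\rp.
\end{align*}

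The key step is to combine this with the sign from $\matm$ factor-by-factor. For each $j$, the quantity
\begin{align*}
(-1)^{m_jm_{j+\lambda_0}}\lp 1+(-1)^{m_j}+(-1)^{m_{j+\lambda_0}}\rp
\end{align*}
depends only on the parities of $m_j$ and $m_{j+\lambda_0}$, and a direct four-case check yields the value $3$ when both are even and $1$ in each of the three remaining cases. Substituting back gives
\begin{align*}
\sum_{[\bsbeta]_2\in\settnow{g}{\lambda}{1}}\Theconst{\permat}{\mathrm{0}}{\bsbeta} = \sum_{\mbf\in\Zbb^g} e^{-\pi\tp{\mbf}T\mbf}\prod_{j=1}^{\lambda_0}c_j(\mbf),
\end{align*}
where each $c_j(\mbf)\in\lb 1,3\rb$, so every term in the series is strictly positive. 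The contribution of $\mbf=\mathrm{0}$ alone is $3^{\lambda_0}$, so inequality (\ref{ineq_theta_intro}) holds; since $\settnow{g}{\lambda}{1}\subset\setenow{g}{\lambda}{1}$, at least one summand (hence at least one element of $\Ecal(\permat)$) must be strictly positive.

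I do not foresee a substantive obstacle: once one recognizes that $\settnow{g}{\lambda}{1}$ has been engineered precisely so that summing its characters kills the sign $(-1)^{m_jm_{j+\lambda_0}}$ produced by the off-diagonal block of $\matm$, the proof is a bookkeeping exercise. The only mild point of care is to note that the condition $\beta_j\beta_{j+\lambda_0}\equiv 0\,\mathrm{mod}\,2$ on each coordinate pair is \emph{stronger} than the global condition defining $\setenow{g}{\lambda}{1}$, which is what allows the per-$j$ factorization; absent this factorization the mixed terms between different $j$'s would obstruct the positivity argument.
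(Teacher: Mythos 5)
Your proof is correct, and it follows the paper's overall strategy — expand each theta-constant as a sum over $\mbf\in\Zbb^g$, use absolute convergence to interchange the two summations, and reduce everything to the strict positivity of the finite inner sum $\sum_{\bsbeta}\syms{g}{\lambda}{1}(\mbf,\bsbeta)$ for every fixed $\mbf$ — but you settle that last, decisive combinatorial step by a genuinely different and sharper argument. The paper bounds the number $\nneg{\mbf}$ of characteristics contributing $-1$ by $(3^{\lambda_0}-1)/2$ via an induction on $\lambda_0$ (splitting off one index pair at a time and tracking how positive and negative counts recombine), which only yields $\sum_{\bsbeta}\syms{g}{\lambda}{1}(\mbf,\bsbeta)\geq 1$. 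You instead observe that the character sum over $\settnow{g}{\lambda}{1}$ factors over the index pairs $(j,j+\lambda_0)$ as $\prod_{j=1}^{\lambda_0}\bigl(1+(-1)^{m_j}+(-1)^{m_{j+\lambda_0}}\bigr)$, and that multiplying each factor by the sign $(-1)^{m_jm_{j+\lambda_0}}$ coming from the off-diagonal block of $\matmnow{g}{\lambda}{1}$ produces a value in $\lb 1,3\rb$ in all four parity cases. This gives the closed-form evaluation
\begin{align*}
 \sum_{[\bsbeta]_2\in\settnow{g}{\lambda}{1}}\syms{g}{\lambda}{1}(\mbf,\bsbeta)
 = 3^{\,\#\lb j \,\mid\, m_j\equiv m_{j+\lambda_0}\equiv 0 \bmod 2\rb},
\end{align*}
which is strictly stronger than the paper's inductive bound and makes the positivity of every term in the $\mbf$-series manifest without induction. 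Your identification of $\settnow{g}{\lambda}{1}$ with the paper's set of representatives $\setbnow{g}{\lambda}{1}$ (pairs ranging over $\lb(0,0),(0,1),(1,0)\rb$, tail coordinates frozen at zero, cardinality $3^{\lambda_0}$) is also exactly the bookkeeping the paper performs, and your closing remark about why the per-$j$ condition, rather than the weaker global condition defining $\setenow{g}{\lambda}{1}$, is what enables the factorization is precisely the right point of care. In short: same decomposition, cleaner and more quantitative finish.
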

           
Finally, we consider the zeros of the theta function
\begin{impsubtheorem}[See Theorem \ref{ztf:thm_1}]
 \label{intro:thm_notin}
 Let $\permat$ be a matrix of $\realsiegelnow{g}{\lambda}{1}$, for some admissible, orthosymmetric real type $(g,\lambda,1)$. Denote with $\Rcmp_{\notni\mathbf{0}}$ a connected component of the locus of real points on the real ppav $(A(\permat), S_{conj})$ non containing its zero. One has
 \begin{align}
  \dim ((\Theta_{\permat}) \cap \Rcmp_{\notni\mathbf{0}}) = g-1.
 \end{align}
 Here $(\Theta_{\permat})$ is understood to indicate the theta-divisor on $A(\permat)$.
\end{impsubtheorem}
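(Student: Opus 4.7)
The plan is to parametrize $\Rcmp_{\notni\mathbf{0}}$ as the real torus $\Rbb^g/\Zbb^g$, realize $\thetadiv\cap\Rcmp_{\notni\mathbf{0}}$ as the zero set of a real-valued, real-analytic function on it, and argue by the intermediate value theorem that this zero set has codimension one.

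Connected components of the real locus of $(A(\permat),S_{conj})$ are described as follows: the point $[\zbf]_{\Lambda(\permat)}$ is real if and only if it admits a representative of the form $\xbf+\tfrac{1}{2}\permat\mbf$ with $\xbf\in\Rbb^g$ and $\mbf\in\Zbb^g$ satisfying $\matm\mbf\equiv\mathbf{0}\mod 2$, and two such representatives give rise to the same point iff $\xbf$ and $\mbf$ coincide respectively modulo $\Zbb^g$ and $2\Zbb^g$. The origin component corresponds to $\mbf\equiv\mathbf{0}\mod 2$; I therefore fix $\mbf\in\Zbb^g$ with $\matm\mbf\equiv\mathbf{0}\mod 2$ but $\mbf\not\equiv\mathbf{0}\mod 2$ parametrizing $\Rcmp_{\notni\mathbf{0}}$ by $\xbf\in\Rbb^g/\Zbb^g$.

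The orthosymmetric hypothesis $\varepsilon=1$ ensures that the diagonal of $\matm$ vanishes, so $\tp{\mathbf{n}}\matm\mathbf{n}\in 2\Zbb$ for every $\mathbf{n}\in\Zbb^g$; combined with $\bar{\permat}=\matm-\permat$, the defining series (\ref{def_theta}) yields $\overline{\Theta(\zbf;\permat)}=\Theta(-\bar{\zbf};\permat)$. Substituting $\zbf=\xbf+\tfrac{1}{2}\permat\mbf$ and exploiting the evenness, $\Zbb^g$-periodicity and quasi-periodicity of $\Theta$, I obtain
\begin{equation*}
 \overline{\Theta\lp\xbf+\tfrac{1}{2}\permat\mbf;\permat\rp}=e^{2\pi i\tp{\mbf}\xbf}\Theta\lp\xbf+\tfrac{1}{2}\permat\mbf;\permat\rp,\qquad \xbf\in\Rbb^g.
\end{equation*}
Consequently $\rho(\xbf):=e^{\pi i\tp{\mbf}\xbf}\Theta\lp\xbf+\tfrac{1}{2}\permat\mbf;\permat\rp$ is a real-valued, real-analytic function on $\Rbb^g$ satisfying $\rho(\xbf+\ebf_j)=(-1)^{m_j}\rho(\xbf)$. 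Since $\mbf\not\equiv\mathbf{0}\mod 2$, there is some index $j$ with $\rho(\xbf+\ebf_j)=-\rho(\xbf)$.

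It remains to count the dimension of the zero set of $\rho$ on $\Rbb^g/\Zbb^g$. On the one hand, $\rho$ is not identically zero: otherwise $\Theta(\cdot;\permat)$ would vanish on the totally real affine subspace $\lb\xbf+\tfrac{1}{2}\permat\mbf:\xbf\in\Rbb^g\rb$ and hence on all of $\Cbb^g$ by analytic continuation, a contradiction. A nontrivial real-analytic function on $\Rbb^g$ has zero set of dimension at most $g-1$. On the other hand, the intermediate value theorem applied to $t\mapsto\rho(\xbf+t\ebf_j)$ on $[0,1]$ produces a zero for every choice of the remaining $g-1$ coordinates of $\xbf$; hence the zero set projects surjectively onto the $(g-1)$-dimensional quotient by the $\ebf_j$ direction, forcing its dimension to be at least $g-1$. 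Since the parametrization $\xbf\mapsto[\xbf+\tfrac{1}{2}\permat\mbf]_{\Lambda(\permat)}$ identifies the zero set of $\rho$ with $\thetadiv\cap\Rcmp_{\notni\mathbf{0}}$, the conclusion follows. The main obstacle is the reality of $\rho$, whose derivation rests crucially on the orthosymmetric hypothesis ($\matm$ having vanishing diagonal); the remainder of the proof reduces to standard theta identities and an elementary intermediate value argument.
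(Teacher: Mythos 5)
Your proposal is correct and takes essentially the same route as the paper's proof of Theorem \ref{ztf:thm_1}: both twist $\Thefunc{\xbf+\nicefrac{1}{2}\permat\mbf}{\permat}$ by the unimodular factor $e^{\pi i \tp{\mbf}\xbf}$ (the paper uses $e^{\pi i \tp{\ebf_{\qbf}}\xbf}$ with $\mbf=\ebf_{\qbf}$) to obtain a real-valued, real-analytic function on the component that is anti-periodic in some lattice direction, and then deduce that its zero set has dimension exactly $g-1$. The only, immaterial, difference is the final step: the paper argues that a zero set of dimension less than $g-1$ would leave the complement path-connected, contradicting the sign change, whereas you apply the intermediate value theorem along lines in the anti-periodic direction; both arguments are valid.
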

Notice that $(A(\permat), S_{conj})$ is not required to be a real Jacobian. The prove of the previous theorem relies essentially on an appropriate application of  the elementary properties of the theta function.

\setcounter{imptheorem}{1}
\setcounter{impsubtheorem}{0}

 \vspace{1em}
 In the second part of the article, we apply the material above to solve the critical case of classification of real Jacobians. The indexed family $\Ocal(\permat)$ proves to be the right instrument to that purpose.
\begin{imptheorem}[See Theorem \ref{fin:thm_ultimo} and Theorem  \ref{thm_E_quantities}] 
 \label{theorem_2}
 Let $\permat$ be a real period matrix of a real Riemann surface $(\Gamma,\sigma)$ of genus $g$ equal to $2g_0$ or $2g_0+1$, for some integer $g_0\geq 1$. Assume that $\permat$ belongs to the real Siegel upper-half space $\realsiegelnow{g}{2g_0}{1}$. One has
 \begin{enumerate}
  \item  If the topological type of $(\Gamma, \sigma)$ is $(g,g+1-2g_0,1)$, then all the elements of $\Ocal(\permat)$ are strictly negative.
  \item If the topological type of $(\Gamma, \sigma)$ is $(g,0,0)$, then all elements of $\Ocal(\permat)$ are nonnegative.
 \end{enumerate}

\end{imptheorem}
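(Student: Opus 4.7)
The approach is to use Theorem~\ref{theorem_1} to reduce to sign computations in a convenient representative of $(\Gamma,\sigma)$, then invoke Riemann's vanishing theorem to translate each $\Theconst{\permat}{\mathbf{0}}{\bsbeta}$ with $[\bsbeta]_2\in\setonow{g}{2g_0}{1}$ into a theta value at the real half-period $\mathbf{e}_{\bsbeta} = \nicefrac{1}{2}\bsbeta$ in the Jacobian. By Riemann's theorem, $\Theta(\mathbf{e}_{\bsbeta};\permat)$ vanishes precisely when $\mathbf{e}_{\bsbeta}+K$ is the Abel image of an effective divisor of degree $g-1$, where $K$ is the vector of Riemann constants; the real structure $S_{conj}$ on $(A(\permat), S_{conj})$ lifts to a real structure on divisors of $\Gamma$, allowing me to restrict attention to real effective divisors when classifying the real theta-divisor intersections.

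For case (1), the topology $(g,g+1-2g_0,1)$ gives a dividing real Riemann surface with $k=g+1-2g_0\geq 1$ ovals. First I would pass to a semi-real basis in which the reflection matrix takes the standard form $\matmnow{g}{g+1-2g_0}{1}$ adapted to the actual topology; this is a transformation in $\srgroup$ (not in the real modular group preserving $\matmnow{g}{2g_0}{1}$), so the theta values must be transported using~(\ref{trasf_theta_b}), and the relevant characteristic set carried along. In the adapted basis, real effective divisors of degree $g-1$ distribute as $(d_1,\ldots,d_k)$ with $\sum d_j = g-1$ and well-understood parity constraints, and the Abel image of the real $(g-1)$-th symmetric product is exactly known. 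The condition $\tp{\bsbeta}\matm\bsbeta\equiv 2 \pmod 4$ defining $\setonow{g}{2g_0}{1}$ is tuned so that $\mathbf{e}_{\bsbeta}+K$ lies on a component of the real Jacobian missed by this Abel image — hence $\Theta\neq 0$ there. To pin down the strict negativity I would connect $\mathbf{e}_{\bsbeta}+K$ by a real path to an anchor half-period whose theta value is known positive by Theorem~\ref{intro:thm_last} (at least one entry of $\Ecal(\permat)$ is strictly positive), and count the algebraic number of crossings with the real theta-divisor; the defining parity of $\seto$ is engineered so this count is odd, flipping the sign.

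For case (2), $(g,0,0)$, there are no real points, so every real effective divisor consists of pairs $P+\overline{P}$ and in particular has even degree. When $g$ is even, degree $g-1$ is odd, so no real effective divisor of that degree exists; consequently $\Theta(\mathbf{e}_{\bsbeta}+K;\permat)\neq 0$ for every $[\bsbeta]_2\in\setonow{g}{2g_0}{1}$, and the reality shown in Theorem~\ref{theorem_1} combined with a direct pairing of complex-conjugate lattice terms in the series defining the theta-constant expresses it as a sum of squared moduli, yielding positivity. When $g$ is odd, degree $g-1$ is even and such divisors can exist, but the image in the Jacobian lies in specific components whose intersection with the $\setonow{g}{2g_0}{1}$ half-periods can be analyzed using Theorem~\ref{intro:thm_notin} together with the absence of genuinely real (non-paired) points; the same conjugate-pairing argument shows the theta constant, when nonzero, is positive, and allows it to vanish, giving overall nonnegativity.

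The hardest part is the sign-tracking in case (1): not the non-vanishing, which follows cleanly from Riemann's theorem and the oval combinatorics, but the explicit determination of the sign. This requires precise bookkeeping of Riemann constants under the change of basis between $\matmnow{g}{2g_0}{1}$ and $\matmnow{g}{g+1-2g_0}{1}$, a correct choice of reference half-period within $\Ecal(\permat)$ from Theorem~\ref{intro:thm_last}, and a parity count of real theta-divisor crossings along a chosen real path in the Jacobian. Bridging the two normalized reflection-matrix forms while keeping control of the characteristic action~(\ref{def_afrak_rid}) and the quasi-periodicity of $\Theta$ is the core technical obstacle; once that is in place, the positivity anchor of Theorem~\ref{intro:thm_last} together with the odd-parity engineering of $\setonow{g}{2g_0}{1}$ does the rest.
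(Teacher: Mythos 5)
There is a genuine gap in both halves of your argument. For part (1), the paper's mechanism is not a study of the real locus at all: the non-vanishing statement that matters is on the \emph{imaginary} locus $\Isp$ (Proposition \ref{ztf:prop_100}), where $\Thf$ is real, nowhere zero, and positive at one point (Corollary \ref{rvt:coro_1}), hence strictly positive everywhere (Proposition \ref{fin:prop_1}); the sign of $\Theconst{\permat}{\mathbf{0}}{\bsbeta}$ is then read off by moving the real half-period $\nicefrac{1}{2}\bsbeta$ to the imaginary point $(\permat-\nicefrac{1}{2}M)(-M\bsbeta)$ via quasi-periodicity, which produces the explicit factor $\exp\lp-\nicefrac{1}{2}\pi i\,\tp{\bsbeta}M\bsbeta\rp=-1$ precisely when $\tp{\bsbeta}M\bsbeta\equiv 2 \bmod 4$ (Lemma \ref{rvt:lemma_2}). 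Your substitute --- joining $\nicefrac{1}{2}\bsbeta$ to a positive anchor by a real path and counting crossings of the real theta divisor --- cannot work as stated: for separated type the theta divisor meets the real locus in a set of full dimension $g-1$ (Theorem \ref{ztf:thm_1}), the crossing count depends on the chosen path, and you give no reason why its parity should be governed by $\tp{\bsbeta}M\bsbeta \bmod 4$. (Also, for topological type $(g,g+1-2g_0,1)$ the real type is already $(g,2g_0,1)$ by (\ref{tttt}), so your preliminary change of basis to ``$\matmnow{g}{g+1-2g_0}{1}$'' confuses $k$ with $\lambda$; the period matrix is already in the correct standard form and no such step exists or is needed.)

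For part (2), both of your key claims fail. First, Riemann's theorem says $\Thefunc{\ebf}{\permat}=0$ iff $\ebf\equiv\abelmap(D-\riemdiv)$ for \emph{some} effective $D$ of degree $g-1$; for a real point $\ebf$ this forces only $D-\sigma D\equiv 0$ in $\Pic(\Gamma)$, not $D=\sigma D$ as a point set. So the parity argument (``no real effective divisor of odd degree on a curve without real points'') does not exclude vanishing: it only excludes the $\Zcal_{triv}$ contribution, and the whole content of Proposition \ref{ztf:real_case:prop_1} is to control the remaining $\Zcal_{nontriv}$ divisors (which satisfy $l(D)>1$) via Martens' theorem, showing they sweep out real codimension at least three. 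Second, the ``sum of squared moduli'' claim is false: at $\zbf=\nicefrac{1}{2}\bsbeta$ with $\permat=\nicefrac{1}{2}M+iT$ the terms of the series are real numbers $\pm\exp(-\pi\tp{\mbf}T\mbf)$ with genuinely varying signs, and no conjugate pairing makes the sum manifestly positive --- consistent with the fact that the theorem only asserts nonnegativity and vanishing does occur. The positivity input the paper actually uses is path-connectivity of $\lb\zbf\in\Rsp \mid \Thf(\zbf)\neq 0\rb$ (Theorem \ref{ztf:real_case:thm_10}) together with a single strictly positive value from Corollary \ref{rvt:coro_1}.
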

 Let us consider the real and imaginary subspaces 
 \begin{align} \label{intro:def_RI}
  \Rsp = \{\xbf \mid \xbf \in \Rbb^g\}, \quad\quad\quad\quad \Isp = \{ i \xbf \mid \xbf \in \Rbb^g \}.
 \end{align}
 of $\Cbb^g$. Let us introduce the notation 
 \begin{align} \label{intro:def_Thf}
  \Thf (\zbf) = \Thefunc{\zbf}{\permat} \quad\quad\quad\quad \zbf \in \Cbb^g
 \end{align}
 for theta as a function of the only variable $\zbf$, if $\permat$ is a fixed element of $\siegel$.  
 An important ingredient to prove theorem \ref{theorem_2} is the following

 \begin{impsubtheorem} [See Proposition \ref{ztf:prop_100} and Theorem \ref{ztf:real_case:thm_10}] \label{intro:th_zeri_RI}
  Let $(\Gamma,\sigma)$ be a real Riemann surface of genus $g$ equal to $2g_0$ or $2g_0+1$, for some integer $g_0\geq 1$. Denote with $\permat$ a real period matrix of its.
  \begin{enumerate}
   \item If the topological type of $(\Gamma,\sigma)$ is $(g,g+1-2g_0,1)$, then the function $\Thf$ has no zeros on the imaginary subspace $\Isp$.
   \item If the topological type of $(\Gamma,\sigma)$ is $(g,0,0)$, then the set
   \begin{align}
    \{ \zbf \in \Rsp \mid \Thf (\zbf)\neq 0 \}
   \end{align}
   is path-connected.
  \end{enumerate}
 \end{impsubtheorem}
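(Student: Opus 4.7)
My plan is to split the argument along the two parts of the statement, and in both cases to combine an explicit series computation of $\Thf$ on the axes $R$ and $I$ with the Riemann vanishing theorem, exploiting the extra structure available in the two topological cases.

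\emph{Step 1: explicit reality on $R$ and $I$.} Writing $\permat = \tfrac{1}{2}\matm + iT$ with $\matm = \matmnow{g}{2g_0}{1}$ in the standard orthosymmetric form, the quadratic exponent of each term in $\Thf(\zbf)$ splits as $\tp{\mbf}(\tfrac{1}{2}\matm)\mbf + i\tp{\mbf}T\mbf$, the first piece contributing the sign $(-1)^{\sum_{j=1}^{g_0} m_j m_{j+g_0}}$. Substituting $\mbf\mapsto -\mbf$ then shows that both $\Thf|_R$ and $\Thf|_I$ are real-valued series, with every summand of $\Thf|_I$ real. This lays out the precise sign pattern the rest of the argument exploits.

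\emph{Step 2: relating zeros to the Abel image.} By the Riemann vanishing theorem the zero locus of $\Thf$ in $\Jac(\Gamma)$ is $\abelmap(\mathrm{Sym}^{g-1}\Gamma) + \boldsymbol{\Delta}$, with $\boldsymbol{\Delta}$ the Riemann constant. I would choose $\boldsymbol{\Delta}$ to represent a real divisor class (for instance the class of a real even theta-characteristic when available), so that modulo $\La{\permat}$ it has a semireal form $\tfrac{1}{2}\abf + \tfrac{1}{2}\permat\,\bbf$ with $\abf,\bbf\in\Zbb^g$. A zero of $\Thf$ on $R$ (resp.\ $I$) then corresponds to a divisor whose Abel image lies in a specific translate of the real torus $R/\Zbb^g$ (resp.\ of the image of $I$) inside $\Jac(\Gamma)$.

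\emph{Step 3: part (1), no zeros on $I$ in the separating case.} For topological type $(g,g+1-2g_0,1)$ the curve is separating, $\Gamma=\Gamma^+\cup\Gamma^-$ glued along its ovals. The image of $I$ modulo $\La{\permat}$ lies in a component of the real part of $\Jac(\Gamma)$ distinct from the identity component $R/\Zbb^g$. After shifting the origin by the appropriate half-period, $\Thf(i\ybf)$ is rewritten as $\Theconst{\permat}{\mathrm{0}}{\bsbeta}\cdot\Theta[\chars{\mathrm{0}}{\bsbeta}](i\ybf';\permat)$ for a characteristic $[\bsbeta]_2\in\sete$ determined by the coset of $i\ybf$. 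Using the series, the remaining factor $\Theta[\chars{\mathrm{0}}{\bsbeta}](i\ybf';\permat)$ is a sum of positive exponential terms weighted by real signs coming from $\matm$, and the combinatorics of $\sete$ forces the net sign to be constant across $\ybf\in\Rbb^g$; the nonvanishing is then reduced to the strict positivity of a member of $\Ecal(\permat)$ supplied by Theorem \ref{intro:thm_last}.

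\emph{Step 4: part (2), path-connected complement on $R$ in the typeless case.} For topological type $(g,0,0)$ the involution $\sigma$ acts on $\Gamma$ without fixed points. By Step 2, a zero of $\Thf|_R$ corresponds to a $\sigma$-invariant effective divisor $D$ of degree $g-1$; since $\Gamma$ has no real points, $D$ cannot contain any real point, so $D$ decomposes into $\sigma$-conjugate pairs together with a residual part of degree $\equiv g-1\pmod 2$. When $g-1$ is even this forces $D=E+\sigma E$, and when $g-1$ is odd a parity mismatch forces $h^0(D)\ge 2$; either way the resulting Abel point is a singular point of the theta divisor by Riemann's singularity theorem. Consequently the real zero set of $\Thf|_R$ lies in the singular locus of $\thetadiv$, whose real dimension is at most $g-2$; so $R\setminus\thetadiv$ has codimension $\geq 2$ and is path-connected for $g\geq 2$.

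\emph{Main obstacle.} The delicate step is Step 3: identifying precisely the lattice coset occupied by $I$ and tracking how the sign $(-1)^{\sum m_j m_{j+g_0}}$ cooperates with that coset so that the resulting real theta constant lies in the family $\Ecal(\permat)$ exhibited with a strictly positive element in Theorem \ref{intro:thm_last}. The combinatorial bookkeeping here (and checking that no accidental cancellation destroys the positivity away from theta-constant level) is what I expect to absorb most of the technical work.
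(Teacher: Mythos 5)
Your proposal has genuine gaps in both parts. For part (1), the decisive step you propose --- showing that the series for $\Theta[\chars{\mathrm{0}}{\bsbeta}](i\ybf';\permat)$ has constant net sign by pure combinatorics of $\matm$ and $\sete$ --- cannot work, because such an argument would apply verbatim to \emph{every} $\permat\in\realsiegelnow{g}{2g_0}{1}$ and would therefore prove too much. Indeed, if $\Thf$ were nonvanishing on $\Isp$ for all matrices of that space, then by continuity, Corollary \ref{rvt:coro_1} and Lemma \ref{rvt:lemma_2} every element of $\Ocal(\permat)$ would be strictly negative for every such $\permat$, contradicting the fact that period matrices of type $(g,0,0)$ (which exist in $\realsiegelnow{g}{2g_0}{1}$ by Theorem \ref{theorem_impossible}) have $\Ocal(\permat)$ nonnegative. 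The nonvanishing on $\Isp$ is precisely the property that separates the two topological types, so some genuinely geometric input from the separating structure is indispensable. The paper supplies it by quoting the known nonvanishing result (Dubrovin--Natanzon, Fay) for a real basis whose cycles $a_{\lambda+1},\ldots,a_g$ are homologous to ovals, and then transporting it to an arbitrary real basis via the modular-invariance Lemma \ref{ztf:lemma_1}; your outline contains no substitute for either ingredient.

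For part (2) there are two concrete errors. First, by Riemann's theorem and Lemma \ref{ztf:real_case:lemma_1}, a zero of $\Thf$ at a real point of the Jacobian corresponds to an effective $D$ of degree $g-1$ with $D\sim\sigma D$ (linear equivalence in $\Pic(\Gamma)$), not $D=\sigma D$ as divisors; your decomposition into conjugate pairs only applies to the latter, and the divisors with $D\sim\sigma D$ but $D\neq\sigma D$ --- which are exactly the ones the paper shows produce the zeros on $\Rsp$ --- are the ones for which $l(D)\geq 2$ and Martens' theorem gives dimension at most $g-3$. Second, your claim that $D=E+\sigma E$ yields a \emph{singular} point of the theta divisor is false in general: Riemann's singularity theorem gives multiplicity $l(D)$, which equals $1$ for generic $E$, so these points form a smooth real hypersurface (real dimension $g-1$) inside the real locus of $\Jac(\Gamma)$ whenever $g$ is odd. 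What saves the statement is not singularity but location: Proposition \ref{ztf:real_case:prop_2} shows, via connectedness of $\Zcal_{triv}(\Gamma)$ together with the lower bound of Theorem \ref{ztf:thm_1} on $\dim((\Thf)\cap\Rcmp_1(\permat))$, that this entire hypersurface sits in the non-identity component $\Rcmp_1(\permat)$ and never meets $\Rsp/\Zbb^g$. Without that step your argument does not yield path-connectedness of $\{\zbf\in\Rsp\mid\Thf(\zbf)\neq 0\}$ for odd $g$.
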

 This theorem is largely inspired by the theory of algebro-geometric solutions to integrable, nonlinear PDEs (see \cite{belokolos1994algebro}, \cite{dubrovin1981theta} 
for a general introduction). Its first part has already been proved for example in \cite{dubrovin1988real} with some additional assumptions on the underlying real basis of homological cycles. We remove them here. The second part is inspired by the work of Malanyuk \cite{malanyuk1994finite}, \cite{malanyuk1991finite}, whereas its original statement seems to us to be incorrect.\footnote{
According to our understanding, it is claimed there (with different normalization conventions), that the theta function does not vanish, under appropriate hypotheses,
on the set $\Rsp_1$ introduced in section  \ref{ztf}. But this is impossible, because that set contains points corresponding to odd theta characteristics.
} For a modified statement (Point 2 in the previous theorem), we provide  proof based on theorem (\ref{intro:thm_notin}) above and on Riemann theorem about the zeros of the theta function.

\vspace{0.5em}

It is not difficult to verify that $\Thf$ is real on both $\Rsp$  and $\Isp$, if $\permat$ belongs to the real Siegel upper-half space $\realsiegelnow{g}{2g_0}{1}$ (see lemma \ref{rvt:lemma_1}).
From Theorem \ref{intro:th_zeri_RI} one deduces that $\Thf$ cannot change sign on $\Rsp$ or $\Isp$, if $\permat$ is a period matrix of topological type $(g,0,0)$ or $(g,g+1-2g_0,1)$
respectively. Under the same hypothesis, theorem \ref{intro:thm_last} implies that $\Thf$ takes a strictly positive value at some point of $\Rsp$ as well as at some point of $\Isp$. Theorem \ref{theorem_2} is then obtained by means of some appropriate manipulations based on the  properties of the theta function.

\vspace{1em}

When considered in the general framework exposed in section \ref{form_of_prob}, this last result implies the following criterion to decide about the existence of real points on a real Riemann surface.

\begin{imptheorem}\label{thm_criterion}
 Let $(\Gamma, \sigma)$ be a real Riemann surface of genus $g$ equal to $2g_0$ or $2g_0+1$ for some integer $g_0\geq 1$. Let $M$ be the reflection matrix of $(\Gamma,\sigma)$ w.r.t. some semi-real basis of cycles in $\homgroup$.
 \begin{enumerate}
  \item If $\lambda(M)\neq 2g_0$ or $\varepsilon(M)\neq 1$, then $(\Gamma,\sigma)$ has real points.
  \item If $\lambda(M)= 2g_0$ and $\varepsilon(M)=1$, 
  let $\permat$ be a real period matrix of $(\Gamma,\sigma)$ and $\bsbeta$ any vector in $\{0,1\}^g$ satisfying the conditions
  \begin{align}
   \tp{\bsbeta}\matmnow{g}{2g_0}{1}\bsbeta \equiv 2 \mod 4, 
   \quad\quad
   \beta_g=0 \quad \text{if g is odd.} 
  \end{align}
  where the matrix $\matmnow{g}{2g_0}{1}$ is defined in (\ref{intro:14}). In this case $(\Gamma,\sigma)$ has real points if and only if 
  \begin{align}
   \Theconst{\permat}{\mathbf{0}}{\bsbeta}<0.
  \end{align}
 \end{enumerate}
\end{imptheorem}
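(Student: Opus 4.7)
The criterion is an almost direct consequence of the structural results collected in the introduction; essentially all substantive content is already contained in Theorems \ref{theorem_all_poss} and \ref{theorem_2}. I would treat the two cases separately.

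\textbf{Part (1).} I would apply Theorem \ref{theorem_all_poss} contrapositively. Since the real type $(\lambda(M), \varepsilon(M))$ of a reflection matrix is invariant under the action \eqref{new_action}, it is an intrinsic invariant of $(\Gamma,\sigma)$ and does not depend on the chosen semi-real basis. If $(\Gamma,\sigma)$ had no real points, i.e. $k=0$, then \eqref{tttt} would force $(\lambda, \varepsilon) = (2g_0, 1)$, contradicting the hypothesis. Hence $k>0$ and real points exist.

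\textbf{Part (2).} Here I am in the critical case: by Theorem \ref{theorem_all_poss} the real type $(g, 2g_0, 1)$ is consistent with exactly two topological types, namely $(g, 0, 0)$ (no real points) and $(g, g+1-2g_0, 1)$ (real points exist). The plan is to show that the sign of the single theta constant $\Theconst{\permat}{\mathbf{0}}{\bsbeta}$ separates these two alternatives. First I would observe that the prescribed $\bsbeta$ represents an element of $\setonow{g}{2g_0}{1}$: comparing with definition \eqref{def_set_o} specialized at $\lambda = 2g_0$, the quadratic congruence $\tp{\bsbeta}\matmnow{g}{2g_0}{1}\bsbeta \equiv 2 \mod 4$ is identical, while the parity conditions $\beta_{\lambda+1}\equiv \cdots \equiv \beta_g \equiv 0 \mod 2$ either reduce to a vacuous statement (when $g = 2g_0$) or to $\beta_g \equiv 0$ (when $g = 2g_0+1$), matching the hypothesis of the theorem. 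Hence $\Theconst{\permat}{\mathbf{0}}{\bsbeta}$ is one of the entries of the indexed family $\Ocal(\permat)$.

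Applying Theorem \ref{theorem_2} then closes the argument: if the topological type is $(g, g+1-2g_0, 1)$, every entry of $\Ocal(\permat)$ is strictly negative, so $\Theconst{\permat}{\mathbf{0}}{\bsbeta} < 0$; if the topological type is $(g, 0, 0)$, every entry is nonnegative, so $\Theconst{\permat}{\mathbf{0}}{\bsbeta} \geq 0$. Since exactly one of these topological types occurs, the stated equivalence follows.

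\textbf{Main obstacle.} Because the criterion is essentially a re-packaging of Theorem \ref{theorem_2}, there is no single decisive obstacle here; the only point that merits verification is the combinatorial identification of $\bsbeta$ with an element of $\seto$. It is reassuring to note that a $\bsbeta$ of the required form always exists: $\bsbeta = \ebf_1 + \ebf_{g_0+1}$ satisfies $\tp{\bsbeta}\matmnow{g}{2g_0}{1}\bsbeta = 2$, and in the odd case $g=2g_0+1$ one has $g_0+1 < g$ so that $\beta_g = 0$ automatically.
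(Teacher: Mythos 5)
Your proposal is correct and follows exactly the route the paper intends: the paper states Theorem \ref{thm_criterion} as a direct consequence of Theorem \ref{theorem_all_poss} (for part (1)) and Theorem \ref{theorem_2} (for part (2)), leaving the deduction implicit, and your write-up supplies precisely that deduction, including the needed check that the prescribed $\bsbeta$ represents an element of $\setonow{g}{2g_0}{1}$ so that $\Theconst{\permat}{\mathbf{0}}{\bsbeta}$ is an entry of $\Ocal(\permat)$. The existence remark with $\bsbeta=\ebf_1+\ebf_{g_0+1}$ is a correct and worthwhile addition.
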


Notice that our criterion requires only the knowledge of the quantities $\lambda$ and $\epsilon$, 
and of the sign of at most one (real) theta-constant. Its implementation as 
numerical algorithm and an extensive exam of its potential for the investigation of systems of multivariate real polynomial equations are currently part of a work in progress.

\section{Invariant theta constants for real ppavs}

\label{itc}

The main purpose of this section is the introduction of the indexed families $\Ocal (\permat)$ and $\Ecal (\permat)$ as invariants -modulo permutations of the indexes- 
of real ppavs of orthosymmetric type (Theorem \ref{theorem_1}). We start with a result holding also in the diasymmetric case.
\begin{theorem}[See Thm. \ref{th_inv_theta_val}]
 \label{itc:prop_20}
 Let $(g,\lambda,\epsilon)$ be a type of real ppavs, with $g\geq 2$. Denote with $M$ the corresponding matrix $\matm$, defined in (\ref{intro:14}). Let $G$ be an element of the real modular group $\realgroup$. Recall that in this case there exists a matrix 
 \begin{align}\label{itc:prop_20:form_1}
  A \in \GL{g}{\Zbb} \quad\quad \mbox{ satisfying } \quad\quad  AM\tp{A}\equiv M \mod 2 
 \end{align}
 such that  
 \begin{align}\label{itc:prop_20:form_2}
  G = \ls \begin{smallmatrix}  
           A & \nicefrac{1}{2}(M-AM\tp{A})(\tp{A})^{-1} \\
           \mathrm{0} & (\tp{A})^{-1}
          \end{smallmatrix}
 \rs
 \end{align}
 Let $\permat, \tilde{\permat}$ be real Riemann matrices of the real Siegel upper-half plane $\realsiegel$ such that 
 \begin{align}\label{itc:prop_20:form_3}
  \tilde{\permat} = \mfrak(G, \permat) = A\permat \tp{A} + \nicefrac{1}{2}(M-AM\tp{A}).
 \end{align}
 Consider theta-characteristics $\chars{\mathbf{0}}{\bsbeta}$, $\chars{\mathrm{0}}{\tilde{\bsbeta}}$ (of dimension $2n$), whose classes modulo 2 are related as follows:
 \begin{align}\label{itc:prop_20:form_4}
  [\tilde{\bsbeta}]_2 = \afrak \lp G, [\bsbeta]_2\rp = [A\bsbeta + \nicefrac{1}{2}\diag (M-AM\tp{A})]_2.
 \end{align}
 One has then 
 \begin{align}\label{itc:prop_20:form_5}
  \Thefunccar{A\zbf}{\tilde{\permat}}{\mathrm{0}}{\tilde{\bsbeta}} = \Thefunccar{\zbf}{\permat}{\mathrm{0}}{\bsbeta}
 \end{align}
\end{theorem}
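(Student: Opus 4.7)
The plan is to prove the equality by direct computation, substituting the definitions of $\tilde{\permat}$ and $\tilde{\bsbeta}$ into the theta series and performing a change of summation variable. First I would expand both sides using the definition (\ref{def_theta_b}). Since $\bsalpha=0$, no half-integer shift appears in the lattice of summation, so
\begin{align*}
\Thefunccar{A\zbf}{\tilde{\permat}}{\mathrm{0}}{\tilde{\bsbeta}}
= \sum_{\mbf\in\Zbb^g}\exp\bigl[\pi i\,\tp{\mbf}\tilde{\permat}\mbf + 2\pi i\,\tp{(A\zbf+\tilde{\bsbeta}/2)}\mbf\bigr].
\end{align*}
Plugging in (\ref{itc:prop_20:form_3}) and (\ref{itc:prop_20:form_4}) (using a lift of $\tilde{\bsbeta}$ to $\Zbb^g$, which is legitimate because $\frac{1}{2}\diag(M-AM\tp{A})\in\Zbb^g$ thanks to (\ref{itc:prop_20:form_1})), and writing $N := M-AM\tp{A}$, the exponent becomes
\begin{align*}
\pi i\,\tp{\mbf}A\permat\tp{A}\mbf + 2\pi i\,\tp{\zbf}\tp{A}\mbf + \pi i\,\tp{\bsbeta}\tp{A}\mbf + \tfrac{\pi i}{2}\tp{\mbf}N\mbf + \tfrac{\pi i}{2}\tp{\diag(N)}\mbf.
\end{align*}

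Next I would perform the change of summation index $\mbf' := \tp{A}\mbf$, which is a bijection of $\Zbb^g$ because $A\in\GL{g}{\Zbb}$. The first three terms of the exponent transform immediately into $\pi i\,\tp{\mbf'}\permat\mbf' + 2\pi i\,\tp{\zbf}\mbf' + \pi i\,\tp{\bsbeta}\mbf'$, which is exactly the exponent of the right-hand side $\Thefunccar{\zbf}{\permat}{\mathrm{0}}{\bsbeta}$. It remains to show that the two residual terms
\begin{align*}
\tfrac{\pi i}{2}\tp{\mbf}N\mbf + \tfrac{\pi i}{2}\tp{\diag(N)}\mbf
\end{align*}
belong to $2\pi i\,\Zbb$ for every $\mbf\in\Zbb^g$; once this is established, the corresponding exponential factor is $1$ and the two sums coincide term by term.

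To verify this last point, set $N' := N/2$, which is a symmetric integer matrix by (\ref{itc:prop_20:form_1}). Then $\tp{\mbf}N\mbf = 2\sum_j N'_{jj}m_j^2 + 4\sum_{j<k}N'_{jk}m_jm_k$, and using the elementary identity $m_j^2\equiv m_j\bmod 2$ one gets $\tp{\mbf}N\mbf \equiv 2\,\tp{\diag(N')}\mbf = \tp{\diag(N)}\mbf\bmod 4$. Hence
\begin{align*}
\tfrac{\pi i}{2}\tp{\mbf}N\mbf + \tfrac{\pi i}{2}\tp{\diag(N)}\mbf \equiv \pi i\,\tp{\diag(N)}\mbf \equiv 0 \mod 2\pi i,
\end{align*}
the final congruence because $\diag(N)$ has even entries. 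The main subtlety in the whole argument is this quadratic mod-$4$ reduction, which is where the congruence condition $AM\tp{A}\equiv M\bmod 2$ of (\ref{itc:prop_20:form_1}) is essentially used; everything else is bookkeeping. No appeal to the general transformation formula (\ref{trasf_theta_b}) is needed, and in particular the scalar prefactor $\kappa[\det(C\permat+D)]^{1/2}$ appearing there collapses to $1$ in the present semi-real setting because $C=0$ and $D=(\tp A)^{-1}$.
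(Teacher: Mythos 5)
Your proof is correct and follows essentially the same route as the paper's: expand the theta series, substitute the explicit forms of $\tilde{\permat}$ and $\tilde{\bsbeta}$, change the summation index via $\mbf\mapsto\tp{A}\mbf$, and kill the residual factor through the congruence $\tp{\mbf}N\mbf+\tp{\diag(N)}\mbf\equiv 0 \pmod 4$, which both you and the paper derive from writing $N=2N'$ with $N'$ symmetric integral and using $m^2\equiv m \pmod 2$. The one step you leave implicit --- that the value of $\Theta\chars{\mathbf{0}}{\tilde{\bsbeta}}$ depends only on the class $[\tilde{\bsbeta}]_2$, so a particular integral lift may be chosen --- is exactly the paper's opening appeal to (\ref{form_segno_theta_char}), which is harmless here since $\bsalpha=\mathbf{0}$ makes the sign factor trivial.
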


\begin{proof}
 Recall that in view of (\ref{form_segno_theta_char}), the value of the theta functions in (\ref{itc:prop_20:form_5}) is invariant as the characteristics vary in a given class modulo 2. One can then assume, without loss of generality,
 \begin{align}
  \tilde{\bsbeta} = A \bsbeta + \nicefrac{1}{2} \diag (M-AM\tp{A}).
 \end{align}
 Plugging this last one and (\ref{itc:prop_20:form_3}) into the definition (\ref{def_theta}) of theta function with characteristics, one obtains
 \begin{align}
  \Thefunccar{A\zbf}{\tilde{\permat}}{\mathrm{0}}{\tilde{\bsbeta}} = & 
  \sum_{\mbf\in \Zbb^n} \exp \lb 
  \pi i \tp{\mbf}\ls A\permat \tp{A}  + \nicefrac{1}{2}\lp M-AM\tp{A}\rp \rs \mbf \right. \\
  &\left. \quad\quad \quad\quad +  2\pi i \tp{\ls A\zbf + \nicefrac{1}{2} A \bsbeta + \nicefrac{1}{4}\diag(M-AM\tp{A}) \rs} \mbf
  \rb \\
  =  & \sum_{\mbf\in \Zbb^n} \exp \lb \pi i \tp{\mbf}(A\permat \tp{A})\mbf + 2\pi i \tp{\ls A(\zbf+\nicefrac{1}{2}\bsbeta)\rs}  \right. \\ 
  &  \left. \quad\quad\quad \quad + \nicefrac{\pi i }{2}  \ls\tp{\mbf}\lp M- AM\tp{A} \rp \mbf + \tp{\diag(M-AM\tp{A})}\mbf  \rs \rb \\
  = & \sum_{\mbf\in \Zbb^n} \exp\lb \pi i \tp{(\tp{A}\mbf)}\permat (\tp{A}\mbf) + 2\pi i \tp{(\zbf+\nicefrac{1}{2}\bsbeta)}(\tp{A}\mbf) \rb \\
  = & \Thefunccar{\zbf}{\permat}{\mathrm{0}}{\bsbeta} 
 \end{align}
  In this last chain of equalities, we have exploited the relation
 \begin{align}
  \tp{\mbf}\lp M-A M \tp{A} \rp \mbf + \tp{\mbf} \diag (M-A M \tp{A}) \equiv 0 \mod 4.
 \end{align}
 In order to deduce this last one, observe that due to (\ref{itc:prop_20:form_5}) 
 \begin{align}
  M - A M \tp{A} = 2 X \quad \quad \mbox{for some} \quad\quad X \in \Mat{n\times n}{\Zbb}, \quad \tp{X}=X.
 \end{align}
 One can then rewrite 
 \begin{align}
  \tp{\mbf}\lp M-A M \tp{A} \rp \mbf + \tp{\mbf} \diag (M-A M \tp{A}) = 2 \ls \tp{\mbf} X \mbf + \tp{\mbf} \diag X \rs = 2 \ls 2 \tp{\mbf} \diag X + 2 c \rs
 \end{align}
 for some appropriate $c$ in $\Zbb$.
 \end{proof}

Next, we restrict to orthosymmetric, real ppavs and prove a purely arithmetic property of the real version (\ref{def_afrak_rid}) of the action $\afrak$.

\begin{theorem}[See Thm. \ref{th_inv_theta_char}] \label{itc:theorem_1}
 Let $(g,\lambda,1)$ be any orthosymmetric type. The sets $\setonow{g}{\lambda}{1}$ and $\setenow{g}{\lambda}{1}$ are invariant for the action $\afrak$ of the group $\realgroupnow{g}{\lambda}{1}$.
\end{theorem}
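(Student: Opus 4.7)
My plan is to verify directly that each of the two defining conditions of $\setonow{g}{\lambda}{1}$ (and, mutatis mutandis, of $\setenow{g}{\lambda}{1}$) is preserved by the action $\afrak$. Fix $G \in \realgroupnow{g}{\lambda}{1}$ with associated $A \in \GL{g}{\Zbb}$ satisfying $AM\tp{A} \equiv M \pmod 2$, where $M := \matmnow{g}{\lambda}{1}$, and set $\tilde{\bsbeta} := A\bsbeta - \nicefrac{1}{2}\diag(M - AM\tp{A})$. I must show (i) that the last $g-\lambda$ entries of $\tilde{\bsbeta}$ are even, and (ii) that $\tp{\tilde{\bsbeta}}M\tilde{\bsbeta} \equiv \tp{\bsbeta}M\bsbeta \pmod 4$; these are precisely the two conditions cutting $\setonow{g}{\lambda}{1}$ out of $\Zbb_2^g$.

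I first decompose $A = \ls\begin{smallmatrix} A_1 & A_2 \\ A_3 & A_4 \end{smallmatrix}\rs$ compatibly with $M = \ls\begin{smallmatrix} \matmnow{\lambda}{\lambda}{1} & 0 \\ 0 & 0 \end{smallmatrix}\rs$, so that $A_1$ has size $\lambda \times \lambda$. Since $\matmnow{\lambda}{\lambda}{1}$ is its own inverse modulo $2$, expanding $AM\tp{A} \equiv M \pmod 2$ block by block yields simultaneously $A_3 \equiv 0 \pmod 2$ and $A_1 \matmnow{\lambda}{\lambda}{1} \tp{A_1} \equiv \matmnow{\lambda}{\lambda}{1} \pmod 2$. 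Inverting this last equation mod $2$ and exploiting the self-inverse property once more produces the companion identity $\tp{A_1}\matmnow{\lambda}{\lambda}{1} A_1 \equiv \matmnow{\lambda}{\lambda}{1} \pmod 2$, which will be essential for (ii).

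Pick the representative $\bsbeta$ with zero in its last $g-\lambda$ components, and write $\bsbeta_1$ for its first $\lambda$ components. Then (i) is immediate: the last $g-\lambda$ entries of $A\bsbeta$ are $A_3 \bsbeta_1 \equiv \mathbf{0} \pmod 2$, while those of $\nicefrac{1}{2}\diag(M - AM\tp{A})$ equal $-\nicefrac{1}{2}(A_3 \matmnow{\lambda}{\lambda}{1} \tp{A_3})_{ii}$, which is even since $A_3 \equiv 0 \pmod 2$ forces $A_3 \matmnow{\lambda}{\lambda}{1} \tp{A_3}$ to be divisible by $4$. For (ii), because $M$ is supported on its top-left $\lambda \times \lambda$ block, the claim is equivalent to $\tp{\tilde{\bsbeta}_1}\matmnow{\lambda}{\lambda}{1}\tilde{\bsbeta}_1 \equiv \tp{\bsbeta_1}\matmnow{\lambda}{\lambda}{1}\bsbeta_1 \pmod 4$, with $\tilde{\bsbeta}_1 = A_1 \bsbeta_1 + \xi_1$ and $\xi_{1,i} = Q(a_i)$, where $a_i$ denotes the $i$-th row of $A_1$ and $Q(v) := \sum_{j=1}^{\lambda_0} v_j v_{j+\lambda_0}$ is the standard hyperbolic quadratic refinement of the form $\matmnow{\lambda}{\lambda}{1}$. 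Dividing by $2$, this reduces to $Q(A_1 \bsbeta_1 + \xi_1) \equiv Q(\bsbeta_1) \pmod 2$.

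The main obstacle is this last congruence. Expanding it via the polarization identity $Q(x+y) = Q(x) + \tp{x}\matmnow{\lambda}{\lambda}{1} y + Q(y)$, and using the formula $Q(A_1\bsbeta_1) \equiv Q(\bsbeta_1) + \sum_k Q(c_k)\,\beta_{1,k} \pmod 2$ (with $c_k$ the $k$-th column of $A_1$, derived from $\tp{A_1}\matmnow{\lambda}{\lambda}{1} A_1 \equiv \matmnow{\lambda}{\lambda}{1} \pmod 2$), the verification boils down to two arithmetic identities modulo $2$: a linear one that must hold for every column index $k$, and the constant identity $\sum_{j=1}^{\lambda_0} Q(a_j)\,Q(a_{j+\lambda_0}) \equiv 0 \pmod 2$. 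The latter is the well-known invariance of the Arf invariant of $Q$: the rows $a_1,\ldots,a_\lambda$ form a hyperbolic basis for the bilinear form $\matmnow{\lambda}{\lambda}{1}$ modulo $2$ (by the orthogonality of $A_1$), and evaluated on the standard basis the Arf invariant of $Q$ is zero, so the same sum over any hyperbolic basis also vanishes. The linear identity is its ``derivative'', and both are verified by patient bookkeeping using the explicit shape of $\matmnow{\lambda}{\lambda}{1}$ and the orthogonality of the mod-$2$ reduction of $A_1$.
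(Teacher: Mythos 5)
Your proposal is correct in its overall architecture and reaches the theorem by a genuinely different route from the paper's. The reduction is the same in both cases: one must show that the two conditions defining $\setonow{g}{\lambda}{1}$ (evenness of the last $g-\lambda$ entries, and the value of $\tp{\bsbeta}\matmnow{g}{\lambda}{1}\bsbeta$ modulo $4$) are preserved, and your block decomposition of $A$, the deduction that the lower-left block is even, and the resulting divisibility by $4$ of the relevant diagonal reproduce the paper's Lemma \ref{itc:lemma_20} and the first half of Lemma \ref{itc:lemma_30} almost verbatim. The divergence is in the key congruence $\tp{\tilde{\bsbeta}_1}\matmnow{\lambda}{\lambda}{1}\tilde{\bsbeta}_1 \equiv \tp{\bsbeta_1}\matmnow{\lambda}{\lambda}{1}\bsbeta_1 \pmod 4$. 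The paper (Lemma \ref{itc:lemma_1}) handles it by invoking the Newman--Smart theorem to lift $A_1$ modulo $2$ to a genuine element $X$ of $\Sp{2g_0}{\Zbb}$, recognizing the map $\bsbeta \mapsto X\bsbeta - \nicefrac{1}{2}\diag(XM\tp{X})$ as the classical action $\afrak$ of a symplectic matrix on theta characteristics, and then citing the classical preservation of parity of characteristics. You instead work entirely over $\mathbb{F}_2$ with the quadratic refinement $Q$ and prove the statement as an identity about symplectic bases, with the constant term identified as the Arf invariant. Your route buys self-containedness: it dispenses with the Newman--Smart lifting, which is a nontrivial external input in the paper. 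The paper's route buys brevity, since the parity-preservation theorem it cites is standard.

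One caveat. After polarization, your argument rests on two identities: the constant one, $\sum_{j}Q(a_j)Q(a_{j+\lambda_0})\equiv 0 \pmod 2$, which you correctly justify by Arf invariance, and the linear one, which in coordinate-free form reads $\xi_1 = A_1 \matmnow{\lambda}{\lambda}{1}\,\eta \pmod 2$ with $\eta_k = Q(c_k)$. This linear identity is exactly where the content of the classical parity-preservation theorem sits; it is true (I checked it on several symplectic matrices and it is equivalent, given the Arf identity, to the statement you are proving evaluated at standard basis vectors), but it does not follow from ``patient bookkeeping'' with the shape of $\matmnow{\lambda}{\lambda}{1}$ alone --- the standard proofs go through generators (transvections) of the mod-$2$ symplectic group or through the functional equation of theta. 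As written, your proof relocates rather than discharges the one genuinely nontrivial arithmetic fact; either supply the transvection argument for the linear identity or, as the paper does, cite the classical theorem on the parity of transformed characteristics.
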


The vectors of the sets $\setonow{g}{\lambda}{1}$ and $\setenow{g}{\lambda}{1}$ can be considered the real (orthosymmetric) analogue of the classical odd and even theta characteristics. 
The proof of theorem \ref{itc:theorem_1} requires some preliminary work. 
\begin{lemma} \label{itc:lemma_1}
 Fix a strictly positive, even integer g equal to $2g_0$. Denote with $M$ the orthosymmetric matrix $\matmnow{2g_0}{2g_0}{1}$. Let $A\in \Mat{2g_0\times 2g_0}{\Zbb}$ satisfy
 \begin{align} \label{itc:lemma_1:form_1}
  AM\tp{A} \equiv M \mod 2.
 \end{align}
 Assume that $\bsbeta, \tilde{\bsbeta} \in \Zbb^{2g_0}$ satisfy
 \begin{align}\label{itc:lemma_1:form_2}
  \tilde{\bsbeta} = A \bsbeta - \frac{1}{2}\diag (AM\tp{A}).
 \end{align}
 Then
 \begin{align}\label{itc:lemma_1:form_3}
  \tp{\tilde{\bsbeta}} M \tilde{\bsbeta} \equiv \tp{\bsbeta} M \bsbeta \mod 4.
 \end{align}
\end{lemma}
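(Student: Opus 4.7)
The strategy is to recast the congruence \eqref{itc:lemma_1:form_3} as the preservation of a $\Zbb_2$-valued quadratic form under an affine-linear map. Since $M = \matmnow{2g_0}{2g_0}{1}$ has zero diagonal and satisfies $\tp v M v = 2\sum_{i=1}^{g_0} v_i v_{g_0+i}$ for every $v \in \Zbb^{2g_0}$, the desired $\bmod\,4$ identity is equivalent to $\bar Q(\tilde\bsbeta) = \bar Q(\bsbeta)$ in $\Zbb_2$, where $\bar Q(v) := \sum_{i=1}^{g_0} v_i v_{g_0+i} \bmod 2$ is a quadratic form on $V := \Zbb_2^{2g_0}$ whose associated bilinear form is $\bar B(u,v) := \tp u M v \bmod 2$.

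My first step would be to promote hypothesis \eqref{itc:lemma_1:form_1} to the companion congruence $\tp A M A \equiv M \bmod 2$: the identity $M^2 = I_{2g_0}$ (specific to the orthosymmetric $M$), together with the observation that $\det A$ is odd (obtained by taking determinants in \eqref{itc:lemma_1:form_1}), allows one to invert the given congruence modulo $2$. Equivalently, the rows $\mathbf{r}_1,\ldots,\mathbf{r}_{2g_0}$ of $A$ reduce mod $2$ to a symplectic basis of $(V,\bar B)$, with $\bar B(\mathbf{r}_i,\mathbf{r}_{g_0+j})=\delta_{ij}$ and the other pairings zero. A direct expansion of $\diag(AM\tp A)$ also yields $\mathbf{d}_i = \sum_{k=1}^{g_0} A_{ik} A_{i,g_0+k}$, whose reduction mod $2$ coincides with $\bar Q(\mathbf{r}_i)$.

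I would then introduce $F(\bsbeta) := \bar Q(A\bsbeta - \mathbf{d}) + \bar Q(\bsbeta) \in \Zbb_2$; the goal is $F \equiv 0$. Expanding with the identity $\bar Q(u-v) = \bar Q(u)+\bar Q(v)+\bar B(u,v)$, the quadratic part of $F$ in $\bsbeta$, namely $\bar Q(A\bsbeta)+\bar Q(\bsbeta)$, has associated bilinear form $\tp{\bsbeta_1}(\tp A M A + M)\bsbeta_2 \equiv 0 \bmod 2$, so $F$ is affine-linear in $\bsbeta$. Hence $F \equiv 0$ reduces to $F(0)=0$ and $F(e_j)=0$ at each standard basis vector $e_j$. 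The former evaluates to $\bar Q(\mathbf{d}) = \sum_{i=1}^{g_0} \bar Q(\mathbf{r}_i)\bar Q(\mathbf{r}_{g_0+i})$, which is the Arf invariant of $\bar Q$ computed in the symplectic basis of rows of $A$; by classical Arf invariance it equals the same sum in the standard basis, namely $\sum_i \bar Q(e_i)\bar Q(e_{g_0+i}) = 0$.

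For $F(e_j) = \bar Q(A e_j - \mathbf{d})$, the vanishing $\bar Q(\mathbf{d})=0$ reduces the task to the identity $\bar Q(A e_j) + \bar B(A e_j,\mathbf{d}) = 0$. I would prove this by decomposing the standard vector $e_{\hat j}$ (where $\hat j := j+g_0$ if $j \le g_0$, else $\hat j := j-g_0$) in the row symplectic basis as
\[
e_{\hat j} = \sum_{i=1}^{g_0} \bigl[(\mathbf{r}_{g_0+i})_j\,\mathbf{r}_i + (\mathbf{r}_i)_j\,\mathbf{r}_{g_0+i}\bigr],
\]
and then expanding $\bar Q(e_{\hat j})$ via the quadratic-form rule. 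The surviving terms reorganize into exactly $\bar Q(A e_j) + \bar B(A e_j,\mathbf{d})$ in $\Zbb_2$, and this vanishes because standard basis vectors always satisfy $\bar Q(e_{\hat j})=0$. The main obstacle is the Arf-invariance fact used for $F(0)=0$; if a self-contained argument is preferred, it can be supplied either by checking invariance on a generating set of transvections of $\Sp{2g_0}{\Zbb_2}$, or by induction on $g_0$ reducing to the easily verified case $g_0=1$.
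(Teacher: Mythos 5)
Your proof is correct, but it takes a genuinely different route from the paper's. The paper first lifts $A$ to an honest integral symplectic matrix via the Newman--Smart theorem ($X\in\Sp{2g_0}{\Zbb}$ with $X\equiv A\bmod 2$), observes that replacing $A$ by $X$ changes $\tilde{\bsbeta}$ only modulo $2$ and hence $\tp{\tilde{\bsbeta}}M\tilde{\bsbeta}$ only modulo $4$, and then recognizes the resulting transformation as the classical action $\afrak$ of $\Sp{2g_0}{\Zbb}$ on theta characteristics, so that the conclusion is exactly the well-known preservation of parity of characteristics. You instead never leave $\Zbb_2$: you reformulate the claim as invariance of the quadratic form $\bar Q$ refining $\bar B(u,v)=\tp{u}Mv\bmod 2$, use $\tp{A}MA\equiv AM\tp{A}\equiv M$ to show the defect $F$ is affine-linear, and then verify $F$ at $\mathbf{0}$ (via Arf invariance, since the rows of $A$ form a $\bar B$-symplectic basis and $\mathbf{d}_i=\bar Q(\mathbf{r}_i)$) and at the standard basis vectors (via a direct expansion of $\bar Q(e_{\hat\jmath})=0$ in the row basis); I checked this last computation and it does reorganize into $\bar Q(Ae_j)+\bar B(Ae_j,\mathbf{d})$ as claimed. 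The two arguments are morally related --- parity preservation of theta characteristics under $\Sp{2g_0}{\Zbb_2}$ is essentially Arf invariance --- but your version trades the nontrivial integral lifting theorem of Newman--Smart for the elementary Arf invariance of $\bar Q$ over $\Zbb_2$, which makes it more self-contained and arguably cleaner; the paper's version has the advantage of requiring no computation once the two classical facts are cited. If you write yours up in full, do make the Arf step explicit (either by citation or by one of the two completions you sketch), since it is the only external input.
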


\begin{proof}
 According to a theorem of Newman and Smart \cite{newman1964symplectic} there exists a matrix $X$ such that
 \begin{align}\label{itc:lemma_1:form_5}
  X  \in \Sp{2g_0}{\Zbb} \quad\quad \mbox{and} \quad\quad  X \equiv  A \mod 2. 
 \end{align}
 Put
 \begin{align}\label{itc:lemma_1:form_6}
  \hat{\bsbeta} = X\bsbeta -\nicefrac{1}{2} \diag (X M \tp{X})
 \end{align}
 As $\hat{\bsbeta}$ and $\bsbeta$ are congruent modulo 2, one has 
 \begin{align}\label{itc:lemma_1:form_7}
  \tp{\hat{\bsbeta}} M \hat{\bsbeta} \equiv \tp{\tilde{\bsbeta}} M \tilde{\bsbeta} \quad\quad \mod 4.
 \end{align}
 Introduce the notation 
 \begin{align}\label{itc:lemma_1:form_8}
  \bsbeta = \chars{\bsalpha_0}{\bsbeta_0}, \quad \hat{\bsbeta} = \chars{\hat{\bsalpha}_0}{\hat{\bsbeta}_0} \quad \quad \quad \bsalpha_0, \bsbeta_0, \hat{\bsalpha}_0, \hat{\bsbeta}_0 \in \Zbb^{g_0}.
 \end{align}
 Then 
 \begin{align}\label{itc:lemma_1:form_9}
  \tp{\bsbeta} M \bsbeta = 2\tp{\bsalpha_0} \bsbeta_0 
  \quad\quad 
  \tp{\hat{\bsbeta}} M \hat{\bsbeta} = 2\tp{\hat{\bsalpha}_0}\hat{\bsbeta}_0.
 \end{align}
 For the action $\afrak$ defined in (\ref{def_afrak}) one has
 \begin{align}\label{itc:lemma_1:form_10}
  [\hat{\bsbeta}]_2 = \afrak (-JXJ, [\bsbeta]_2) \quad\quad J = \ls \begin{smallmatrix} \mathrm{0} & \mathrm{Id}_{n_0} \\ -\mathrm{Id}_{n_0} & \mathrm{0} \end{smallmatrix} \rs.
 \end{align}
 Indeed, if
 \begin{align}\label{itc:lemma_1:form_11}
  X = \ls \begin{smallmatrix} P & Q \\ R & S \end{smallmatrix} \rs
 \end{align}
 from the condition $X \in \Sp{2g_0}{\Zbb}$, one has
 \begin{align}\label{itc:lemma_1:form_12}
  P\tp{Q} = Q \tp{P}, \quad\quad R\tp{S}= S\tp{R}.
 \end{align}
 which yields
 \begin{align}\label{itc:lemma_1:form_13}
  \hat{\bsbeta} 
  = 
  \ls \begin{smallmatrix} P & Q \\ R & S \end{smallmatrix} \rs \chars{\bsalpha_0}{\bsbeta_0} 
  - 
  \nicefrac{1}{2}\diag \lp 
  \ls\begin{smallmatrix} P & Q \\ R & S \end{smallmatrix}\rs
  \ls\begin{smallmatrix} \mathrm{0} & \mathrm{Id} \\ \mathrm{Id} & \mathrm{0} \end{smallmatrix}\rs
  \ls\begin{smallmatrix} \tp{P} & \tp{R} \\ \tp{Q} & \tp{S} \end{smallmatrix}\rs  
  \rp
  =
  \chars{P\bsalpha_0 + Q \bsbeta_0 - \diag (P\tp{Q})}{R \bsalpha_0 + S \bsbeta_0 - \diag (R\tp{S})}
 \end{align}
 The action $\afrak$ is well-known (\cite{Fay}, \cite{farkas2012theta}) to preserve the parity of the characteristics (\ref{itc:lemma_1:form_8}). That is
 \begin{align}\label{itc:lemma_1:form_14}
  \tp{\hat{\bsalpha}_0} \hat{\bsbeta}_0 \equiv \tp{\bsalpha_0} \bsbeta_0 \mod2.
 \end{align}
 In view of (\ref{itc:lemma_1:form_9}), this last one implies 
 \begin{align}\label{itc:lemma_1:form_15}
  \tp{\hat{\bsbeta}}  M \hat{\bsbeta} \equiv \tp{\bsbeta} M \bsbeta \mod 4.
 \end{align}
 The thesis follows then from (\ref{itc:lemma_1:form_7}) and (\ref{itc:lemma_1:form_15}) together.

\end{proof}

\begin{lemma} \label{itc:lemma_20}
 Fix a type of real ppav $(g,\lambda,\varepsilon)$, with $0<\lambda<g$. Let $A\in \GL{g}{\Zbb}$ satisfy
 \begin{align} \label{itc:lemma_20:form_1}
  A \matm \tp{A} \equiv \matm \mod 2. 
 \end{align}
 Introduce the notation 
 \begin{align}\label{itc:lemma_20:form_2}
  A = \ls
      \begin{smallmatrix}
       A_0 & A_{01} \\ A_{10} & A_1
      \end{smallmatrix}
      \rs.
 \end{align}
 Here $A_0$ and $A_1$ are understood to be square matrices of dimension $\lambda$ and $g-\lambda$ respectively. One has then
 \begin{align}\label{itc:lemma_20:form_3}
  A_0 \matmnow{\lambda}{\lambda}{\varepsilon} \tp{A_0} 
  \equiv
  \matmnow{\lambda}{\lambda}{\varepsilon},
  \quad\quad
  A_{10}\equiv \mathrm{0} 
  \quad\quad
  \mod 2
 \end{align}
 and 
 \begin{align} \label{itc:lemma_20:form_4}
  \diag (A_{10}\matmnow{\lambda}{\lambda}{\varepsilon}\tp{A_{10}}) \equiv 0 \mod 4.
 \end{align}
\end{lemma}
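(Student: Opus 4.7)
The plan is to expand $A\matm\tp A$ in block form and compare with $\matm$ entry-wise modulo $2$. Setting $M_0 := \matmnow{\lambda}{\lambda}{\varepsilon}$ and using the block decomposition of $A$ from (\ref{itc:lemma_20:form_2}), together with the fact that $\matm$ has $M_0$ in its upper-left $\lambda\times\lambda$ block and zeros elsewhere, a direct computation gives
\begin{align*}
 A\matm\tp{A} = \ls\begin{smallmatrix} A_0 M_0 \tp{A_0} & A_0 M_0 \tp{A_{10}} \\ A_{10} M_0 \tp{A_0} & A_{10} M_0 \tp{A_{10}} \end{smallmatrix}\rs.
\end{align*}
Comparing the upper-left block with $M_0$ modulo $2$ yields the first assertion of (\ref{itc:lemma_20:form_3}).

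For the second assertion, I first observe that in both the diasymmetric and orthosymmetric cases $M_0$---being either $\mathrm{Id}_\lambda$ or $\ls\begin{smallmatrix} 0 & \mathrm{Id}_{\lambda_0} \\ \mathrm{Id}_{\lambda_0} & 0 \end{smallmatrix}\rs$---has determinant $\pm 1$, hence is invertible modulo $2$. Taking determinants in $A_0 M_0 \tp{A_0}\equiv M_0 \mod 2$ gives $\det(A_0)^2 \equiv 1 \mod 2$, so $A_0$ is itself invertible modulo $2$. The lower-left block of the expansion above reads $A_{10} M_0 \tp{A_0} \equiv 0 \mod 2$; right-multiplying by the mod-$2$ inverses of $\tp{A_0}$ and of $M_0$ forces $A_{10}\equiv 0 \mod 2$.

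Finally, (\ref{itc:lemma_20:form_4}) is an immediate consequence of the previous step: writing $A_{10} = 2B$ with $B$ an integer matrix, one has $A_{10} M_0 \tp{A_{10}} = 4\,B M_0 \tp{B}$, so every entry---in particular every diagonal entry---is divisible by $4$. The whole content of the lemma is really an unpacking of the block structure of the mod-$2$ condition $A\matm\tp{A}\equiv \matm$, combined with the trivial observation that $M_0$ is invertible over $\Zbb/2\Zbb$; I do not anticipate any substantial obstacle, and the hypothesis $A\in\GL{g}{\Zbb}$ is in fact not even needed beyond ensuring that the block $A_0$ of the computation above is a well-defined integer matrix.
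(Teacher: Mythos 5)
Your proof is correct and follows essentially the same route as the paper's: both expand the mod-$2$ condition blockwise, deduce invertibility of $A_0 M_0$ modulo $2$ from the upper-left block, use the off-diagonal block to force $A_{10}\equiv 0$, and then write $A_{10}=2B$ to obtain the divisibility by $4$. Your closing observation that $A\in\GL{g}{\Zbb}$ is not actually needed is also accurate, since the invertibility of $A_0$ modulo $2$ already follows from the first block congruence.
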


\begin{proof}
 With notation 
 (\ref{itc:lemma_20:form_2}), condition (\ref{itc:lemma_20:form_1}) is equivalent to the following system of equations
 \begin{align}\label{itc:lemma_20:form_5}
  \lb
   \begin{array}{lr}
    A_0\matmnow{\lambda}{\lambda}{\varepsilon}\tp{A_0} \equiv \matmnow{\lambda}{\lambda}{\varepsilon} 
    & \mathrm{mod} 2\\
    A_0 \matmnow{\lambda}{\lambda}{\varepsilon}\tp{A_{10}} \equiv \mathrm{0} 
    & \mathrm{mod} 2\\
    A_{10}\matmnow{\lambda}{\lambda}{\varepsilon} \tp{A_{10}} \equiv \mathrm{0}
    & \mathrm{mod} 2.
   \end{array}
  \right.
 \end{align}
 The first one of (\ref{itc:lemma_20:form_5}) is exactly the first one of (\ref{itc:lemma_20:form_3}) and implies
 \begin{align}
  \det (A_0 \matmnow{\lambda}{\lambda}{\varepsilon}) \equiv 1 \mod 2.
 \end{align}
 The second equation in (\ref{itc:lemma_20:form_5}) implies then the second one of (\ref{itc:lemma_20:form_3}). This last one is equivalent to the existence of a matrix $X\in \Mat{(g-\lambda)\times \lambda}{\Zbb}$ such that
 \begin{align}
  A_{10} = 2X.
 \end{align}
 This yields
 \begin{align}
  \diag(A_{10}\matmnow{\lambda}{\lambda}{\varepsilon}\tp{A_{10}}) = 4\diag (X\matmnow{\lambda}{\lambda}{\varepsilon} \tp{X})
 \end{align}
 which is (\ref{itc:lemma_20:form_4})
\end{proof}

\begin{lemma} \label{itc:lemma_30}
 Fix any orthosymmetric type $(g,\lambda, 1)$. Denote with $M$ the corresponding matrix $\matmnow{g}{\lambda}{1}$. Let $A\in \GL{g}{\Zbb}$ satisfy
 \begin{align} \label{itc:lemma_30:form_1}
  AM\tp{A} \equiv M \mod 2.
 \end{align}
 Consider $\bsbeta\in \Zbb^g$ of the form
 \begin{align} \label{itc:lemma_30:form_2}
  \bsbeta = 
  \ls 
  \begin{smallmatrix} \bsbeta_0 \\ \bsbeta_1 \end{smallmatrix}
  \rs
  , \quad \quad 
  \bsbeta_1 \in \Zbb^{g-\lambda}, \,\,\,\, \bsbeta_1 \equiv \mathbf{0} \mod 2.
 \end{align}
 Put 
 \begin{align} \label{itc:lemma_30:form_3}
  \tilde{\bsbeta} 
  = 
  \ls
  \begin{smallmatrix} 
  \tilde{\bsbeta}_0 \\ \tilde{\bsbeta}_1 
  \end{smallmatrix}
  \rs 
  = A\bsbeta -\nicefrac{1}{2}\diag (A M \tp{A}).
 \end{align}
 Then 
 \begin{align} \label{itc:lemma_30:form_4}
  \tilde{\bsbeta}_1 \equiv \mathbf{0} \mod 2
 \end{align}
 and 
 \begin{align} \label{itc:lemma_30:form_5}
  \tp{\tilde{\bsbeta}_0}\matmnow{\lambda}{\lambda}{1}\tilde{\bsbeta}_0 \equiv \tp{\bsbeta_0}\matmnow{\lambda}{\lambda}{1}\bsbeta_0 \mod 4.
 \end{align}
\end{lemma}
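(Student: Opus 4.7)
The plan is to block-decompose $A$ according to the splitting $g = \lambda + (g-\lambda)$ and reduce the claim to Lemma \ref{itc:lemma_1} applied to the upper-left block. Writing $A = \ls\begin{smallmatrix} A_0 & A_{01} \\ A_{10} & A_1 \end{smallmatrix}\rs$ as in Lemma \ref{itc:lemma_20} (with $A_0$ of dimension $\lambda$), I would exploit three facts furnished by that lemma: $A_0\matmnow{\lambda}{\lambda}{1}\tp{A_0} \equiv \matmnow{\lambda}{\lambda}{1} \mod 2$; $A_{10} \equiv \mathbf{0} \mod 2$; and, crucially, $\diag(A_{10}\matmnow{\lambda}{\lambda}{1}\tp{A_{10}}) \equiv \mathbf{0} \mod 4$. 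Since the only nonzero block of $M = \matmnow{g}{\lambda}{1}$ is its top-left $\matmnow{\lambda}{\lambda}{1}$, I can then read off blockwise
$$\diag(AM\tp{A}) = \ls\begin{smallmatrix} \diag(A_0\matmnow{\lambda}{\lambda}{1}\tp{A_0}) \\ \diag(A_{10}\matmnow{\lambda}{\lambda}{1}\tp{A_{10}}) \end{smallmatrix}\rs.$$

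For (\ref{itc:lemma_30:form_4}) I would simply observe that, blockwise,
$$\tilde{\bsbeta}_1 = A_{10}\bsbeta_0 + A_1\bsbeta_1 - \tfrac{1}{2}\diag(A_{10}\matmnow{\lambda}{\lambda}{1}\tp{A_{10}}),$$
and each of the three summands is $\equiv \mathbf{0} \mod 2$: the first since $A_{10}$ is even, the second since $\bsbeta_1$ is even by hypothesis, and the third since the halved diagonal is even by the third fact above.

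For (\ref{itc:lemma_30:form_5}) the corresponding expansion is
$$\tilde{\bsbeta}_0 = \bsbeta_0^{\star} + A_{01}\bsbeta_1, \qquad \bsbeta_0^{\star} := A_0\bsbeta_0 - \tfrac{1}{2}\diag(A_0\matmnow{\lambda}{\lambda}{1}\tp{A_0}).$$
Since $\bsbeta_1$ is even, $A_{01}\bsbeta_1 = 2Y$ for some $Y \in \Zbb^{\lambda}$; expanding the quadratic form in $\matmnow{\lambda}{\lambda}{1}$ and using its symmetry shows that the cross and quadratic terms in $2Y$ vanish mod $4$, so $\tp{\tilde{\bsbeta}_0}\matmnow{\lambda}{\lambda}{1}\tilde{\bsbeta}_0 \equiv \tp{\bsbeta_0^{\star}}\matmnow{\lambda}{\lambda}{1}\bsbeta_0^{\star} \mod 4$. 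At this point I would invoke Lemma \ref{itc:lemma_1} with even dimension $\lambda = 2\lambda_0$ and matrix $A_0$ — its hypothesis is exactly the first fact from Lemma \ref{itc:lemma_20} — to conclude $\tp{\bsbeta_0^{\star}}\matmnow{\lambda}{\lambda}{1}\bsbeta_0^{\star} \equiv \tp{\bsbeta_0}\matmnow{\lambda}{\lambda}{1}\bsbeta_0 \mod 4$, and chain the two congruences.

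I do not anticipate a serious conceptual obstacle: this lemma essentially serves as a bookkeeping bridge between Lemmas \ref{itc:lemma_1} and \ref{itc:lemma_20}, where the real arithmetic content is located. The only part requiring genuine care is the precise mod-$4$ refinement (\ref{itc:lemma_20:form_4}) of Lemma \ref{itc:lemma_20}, which is exactly what guarantees that the correction from the lower block in $\diag(AM\tp{A})$ does not spoil the evenness of $\tilde{\bsbeta}_1$. The edge cases $\lambda = g$ (where $\bsbeta_1$ is absent and the statement reduces directly to Lemma \ref{itc:lemma_1} applied with $A_0 = A$) and $\lambda = 0$ (where $M$ is the zero matrix and both claims are trivial or vacuous, Lemma \ref{itc:lemma_20} being stated only for $0 < \lambda < g$) should be dispatched separately but cause no real difficulty.
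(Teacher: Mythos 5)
Your proposal is correct and follows essentially the same route as the paper: block-decompose $A$, harvest the three congruences of Lemma \ref{itc:lemma_20}, observe that the lower block of $\tilde{\bsbeta}$ is even term by term, and reduce the mod-$4$ statement for the upper block to Lemma \ref{itc:lemma_1} applied to $A_0$ after discarding the even perturbation $A_{01}\bsbeta_1$. The only (immaterial) difference is that your $\bsbeta_0^{\star}$ carries the sign $-\nicefrac{1}{2}\diag(A_0\matmnow{\lambda}{\lambda}{1}\tp{A_0})$ matching Lemma \ref{itc:lemma_1} exactly, whereas the paper's intermediate vector uses $+\nicefrac{1}{2}$; the two agree mod $2$, so both work.
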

\begin{proof}
 For the ease of notation, let us indicate the matrix $\matmnow{\lambda}{\lambda}{1}$ simply with $M_0$. Making use of notation (\ref{itc:lemma_20:form_2}) and simplifying according to (\ref{itc:lemma_20:form_4}), from the definition (\ref{itc:lemma_30:form_3}) of $\tilde{\bsbeta}$ one obtains
 \begin{align}
  \begin{cases}
   \tilde{\bsbeta}_0 \equiv A_0\bsbeta_0 + A_{01}\bsbeta_1 + \nicefrac{1}{2}\diag (A_0 M_0 \tp A_0) & \mod 2 \\
   \tilde{\bsbeta}_1 \equiv A_1 \bsbeta_1 & \mod 2.
  \end{cases}
 \end{align}
 Relation (\ref{itc:lemma_30:form_4}) follows immediately.
 Now put 
 \begin{align}
  \tilde{\tilde{\bsbeta}}_0 = A_0\bsbeta_0 + \frac{1}{2}\diag(A_0M_0\tp A_0).
 \end{align}
 As 
 \begin{align}
  \tilde{\bsbeta}_0 = \tilde{\tilde{\bsbeta}}_0 \quad\quad \mod 2,
 \end{align}
 one has
 \begin{align} \label{tc_v2:thm_20:form_50}
  \tp{\tilde{\tilde{\bsbeta}}_0} M_0 \tilde{\tilde{\bsbeta}}_0\equiv \tp{\tilde{\bsbeta}_0} M_0 \tilde{\bsbeta}_0 \quad\quad \mod 4
 \end{align}
 On the other side, $A_0, M_0, \bsbeta_0$ and $\tilde{\tilde{\bsbeta}}_0$ satisfy the hypothesis of lemma \ref{itc:lemma_1}. Consequently,
 \begin{align} \label{tc_v2:thm_20:form_60}
  \tp{\tilde{\tilde{\bsbeta}}}_0 M \tilde{\tilde{\bsbeta}}_0 \equiv \tp{\bsbeta_0} M \bsbeta_0 \quad\quad \mod 4.
 \end{align}
 Formulas (\ref{tc_v2:thm_20:form_50}) and (\ref{tc_v2:thm_20:form_60}) together give (\ref{itc:lemma_30:form_5}). The proof is complete.
\end{proof}

\begin{proof}[Proof of theorem \ref{itc:theorem_1}]
 Let 
 $\ls\begin{smallmatrix} \mathbf{0} \\ \bsbeta \end{smallmatrix}\rs_2$
 be any element of $\setonow{g}{\lambda}{1}$. For the ease of notation, let us put 
 \begin{align}
  \bsbeta = 
  \ls
  \begin{smallmatrix} \bsbeta_1 \\ \bsbeta_2 \end{smallmatrix}
  \rs
  , \quad\quad 
  \bsbeta_1 \in \Zbb^{\lambda}, \bsbeta \in \Zbb^{g-\lambda}.
 \end{align}
 From the definition (\ref{def_set_o}) of $\setonow{g}{\lambda}{1}$ one has 
 \begin{align}
  \bsbeta_2 \equiv \mathbf{0} \mod 2, \quad\quad \tp{\bsbeta_1} \matmnow{\lambda}{\lambda}{1}\bsbeta_1 \equiv 2 \mod 4. 
 \end{align}
 Consider
 \begin{align}
  G = \ls\begin{array}{cc} A & \frac{1}{2}(M-A M \tp{A})(\tp{A})^{-1}\\
                           \mathbf{0} & (\tp{A})^{-1}
  \end{array}\rs
  \in \realgroupnow{g}{\lambda}{1}
 \end{align}
 where, in view of definition (\ref{def_real_mod_group}), 
 \begin{align}
  A \in \GL{g}{\Zbb} 
  \quad\quad \mbox{and} \quad\quad 
  A M \tp{A} \equiv M \mod 2.
 \end{align}
 Put 
 \begin{align}
  \tilde{\bsbeta} = A\bsbeta - \nicefrac{1}{2}\diag (A M \tp{A}).
 \end{align}
 According to definition (\ref{def_afrak_rid}), one has then
 \begin{align}
  [\tilde{\bsbeta}]_2
  = \afrak (G, [\bsbeta]_2).
 \end{align}
 This last one is easily seen to belong to $\setonow{g}{\lambda}{1}$. Indeed, let us put 
 \begin{align}
  \tilde{\bsbeta} = 
  \ls
  \begin{smallmatrix} \tilde{\bsbeta}_1 \\ \tilde{\bsbeta}_2 \end{smallmatrix}
  \rs, \quad\quad \tilde{\bsbeta}_1 \in \Zbb^{\lambda}, \tilde{\bsbeta}_2 \in \Zbb^{g-\lambda}.
 \end{align}
 From lemma \ref{itc:lemma_30} one has then 
 \begin{align}
  \tilde{\bsbeta}_2 \equiv \mathbf{0} \mod 2 
 \end{align}
 and
 \begin{align}
  \tp{\tilde{\bsbeta}_1}\matmnow{\lambda}{\lambda}{1}\tilde{\bsbeta}_1 \equiv \tp{\bsbeta_1}\matmnow{\lambda}{\lambda}{1}\bsbeta_1 \mod 4. 
 \end{align}
 The argument for $\setenow{g}{\lambda}{1}$ is analogous.
\end{proof}

We are now ready for the proof of the following

\setcounter{imptheorem}{0}
\begin{theorem}[See Thm. \ref{theorem_1}]\label{itc:theorem_100}
 Fix any orthosymmetric type $(g,\lambda,1)$. Let $\permat, \tilde{\permat} \in \realsiegelnow{g}{\lambda}{1}$ lie in the same orbit of the modular action $\mfrak$ of the real modular group $\realgroupnow{g}{\lambda}{1}$ on $\realsiegelnow{g}{\lambda}{1}$. Then the families $\Ocal (\permat)$, $\Ocal (\tilde{\permat})$ coincide up to a permutation of the indexes. The same holds for the families $\Ecal (\permat)$, $\Ecal (\tilde{\permat})$.
\end{theorem}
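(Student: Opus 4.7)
The plan is to combine Theorems \ref{itc:prop_20} and \ref{itc:theorem_1}. Since $\permat$ and $\tilde{\permat}$ lie in the same $\realgroupnow{g}{\lambda}{1}$-orbit, I would fix $G \in \realgroupnow{g}{\lambda}{1}$ with $\tilde{\permat} = \mfrak(G, \permat)$, writing $A$ for its upper-left block as in (\ref{itc:prop_20:form_2}).

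First I would consider the map $\pi_G := \afrak(G, \cdot) : \Zbb_2^g \to \Zbb_2^g$ from (\ref{def_afrak_rid}). Because this is the restriction of the classical group action of $\Sp{2g}{\Zbb}$ on $\Zbb^{2g}$ to the stable subgroup $\realgroupnow{g}{\lambda}{1}$ and the stable subset of characteristics of the form $\chars{\mathbf{0}}{\bsbeta}$, it is itself a group action; in particular $\pi_G$ is bijective, with inverse $\pi_{G^{-1}}$. By Theorem \ref{itc:theorem_1}, both $\pi_G$ and $\pi_{G^{-1}}$ send $\setonow{g}{\lambda}{1}$ to itself and $\setenow{g}{\lambda}{1}$ to itself; hence $\pi_G$ restricts to a permutation of $\setonow{g}{\lambda}{1}$ and to a permutation of $\setenow{g}{\lambda}{1}$. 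These will play the role of the permutations of indexes required in the statement.

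Next I would specialize Theorem \ref{itc:prop_20} to $\zbf = \mathbf{0}$: for each $[\bsbeta]_2 \in \Zbb_2^g$, setting $[\tilde{\bsbeta}]_2 := \pi_G([\bsbeta]_2)$, one obtains
\begin{align*}
\Theconst{\tilde{\permat}}{\mathbf{0}}{\tilde{\bsbeta}} = \Theconst{\permat}{\mathbf{0}}{\bsbeta}.
\end{align*}
Letting $[\bsbeta]_2$ range over $\setonow{g}{\lambda}{1}$ and combining with the previous paragraph, this means that the entry of $\Ocal(\tilde{\permat})$ indexed by $\pi_G([\bsbeta]_2)$ coincides with the entry of $\Ocal(\permat)$ indexed by $[\bsbeta]_2$; consequently the two families agree up to the permutation $\pi_G|_{\setonow{g}{\lambda}{1}}$. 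The identical argument applied to $\setenow{g}{\lambda}{1}$ handles $\Ecal$.

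I do not anticipate a genuine obstacle: the two main ingredients supply exactly the two halves needed, namely combinatorial invariance of the index sets on the one hand and analytic equality of the theta-constant values on the other. The one point worth verifying explicitly is that the formula in (\ref{def_afrak_rid}) really defines a group action of $\realgroupnow{g}{\lambda}{1}$ on $\Zbb_2^g$, so that $\pi_G$ is indeed a bijection; this in turn reduces to checking that the subgroup $\realgroupnow{g}{\lambda}{1}$ and the set of characteristics with zero upper half are jointly stable under the composition law of $\afrak$ inherited from $\Sp{2g}{\Zbb}$.
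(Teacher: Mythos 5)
Your proposal is correct and follows essentially the same route as the paper: fix $G$ with $\tilde{\permat}=\mfrak(G,\permat)$, use Theorem \ref{itc:theorem_1} to see that $\afrak(G,\cdot)$ restricts to a permutation of $\setonow{g}{\lambda}{1}$ (resp. $\setenow{g}{\lambda}{1}$), and use Theorem \ref{itc:prop_20} at $\zbf=\mathbf{0}$ to match the theta-constant values. Your extra remark justifying bijectivity via the inverse element $G^{-1}$ is a small point the paper leaves implicit, but it does not change the argument.
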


\begin{proof}
 Let us first prove the thesis for the families $\Ocal (\permat)$, $\Ocal (\tilde{\permat})$. Let $G$ be an element of $\realgroupnow{g}{\lambda}{1}$ satisfying
 \begin{align} \label{itc:theorem_30:form_1}
  \tilde{\permat} = \mfrak (G, \permat).
 \end{align}
 In view of theorem \ref{itc:theorem_1}, the map
 \begin{align}\label{itc:theorem_30:form_3}
  [\bsbeta]_2 \longrightarrow \afrak(G,[\bsbeta]_2) 
  \quad\quad \quad \quad [\bsbeta]_2\in\Zbb^g_2,
 \end{align}
 reduces to a bijection of $\setonow{g}{\lambda}{1}$ into itself. Moreover, due to theorem \ref{itc:prop_20}, 
 \begin{align}\label{itc:theorem_30:form_4}
  \Theconst{\tilde{\permat}}{\mathrm{0}}{\afrak(G,\bsbeta)} =
  \Theconst{\permat}{\mathrm{0}}{\bsbeta}, 
  \quad\quad\quad\quad
  \forall \bsbeta \in \Zbb^g.
 \end{align}
 One has then 
 \begin{align}\label{itc:theorem_30:form_5}
  \Ocal (\permat) 
  = 
  \lb 
  \Theconst{\permat}{\mathrm{0}}{\bsbeta}
  \rb_{[\bsbeta]_2\in \setonow{g}{\lambda}{1}} 
  = 
  \lb 
  \Theconst{\tilde{\permat}}{\mathrm{0}}{\afrak(G,\bsbeta)}
  \rb_{[\bsbeta]_2\in \setonow{g}{\lambda}{1}}.
 \end{align}
 On the other side, according to definition (\ref{def_o_tau}),
 \begin{align}\label{itc:theorem_30:form_6}
  \Ocal(\tilde{\permat}) 
  = 
  \lb 
  \Theconst{\tilde{\permat}}{\mathrm{0}}{\bsbeta}
  \rb_{[\bsbeta]_2\in \setonow{g}{\lambda}{1}}.
 \end{align}
 One concludes that the families $\Ocal (\permat)$ and $\Ocal (\tilde{\permat})$ coincide up to the permutation (\ref{itc:theorem_30:form_3})  of their indexes. The proof for families $\Ecal (\permat)$, $\Ecal (\tilde{\permat})$ is analogous.
\end{proof}
 We conclude this section with the following
\begin{lemma} \label{rvt:lemma_1} 
 Fix an orthosymmetric real type $(g,\lambda,1)$. Denote with $M$ the corresponding matrix $\matmnow{g}{\lambda}{1}$. Let $\permat$ be a real 
 Riemann matrix of the real Siegel upper-half space $\realsiegelnow{g}{\lambda}{1}$. The function $\Thf$ (is even and) satisfies
 \begin{align} \label{rvt:lemma_1:form_1}
  \overline{\Thefunc{\zbf}{\permat}} = \Thefunc{\overline{\zbf}}{\permat} \quad\quad\quad\quad \forall \zbf \in \Cbb^g.
 \end{align}
 In particular, $\Thf$ is real on both $\Rsp$ and $\Isp$ introduced in (\ref{intro:def_RI}).
\end{lemma}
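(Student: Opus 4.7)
The plan is to read off both claims directly from the series expansion of $\Thf$, using that $\permat = \tfrac{1}{2}M + iT$ with $M = \matmnow{g}{\lambda}{1}$ having the explicit orthosymmetric form (\ref{intro:14}).

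First I would establish evenness: in the defining series
\begin{equation*}
 \Thefunc{\zbf}{\permat} = \sum_{\mbf\in\Zbb^g} \exp\!\bigl[\pi i\,\tp{\mbf}\permat\mbf + 2\pi i\,\tp{\mbf}\zbf\bigr],
\end{equation*}
the substitution $\mbf\mapsto -\mbf$ is a bijection of $\Zbb^g$ that leaves the quadratic term unchanged and flips the sign of the linear term, yielding $\Thefunc{-\zbf}{\permat}=\Thefunc{\zbf}{\permat}$.

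Next I would prove the conjugation identity (\ref{rvt:lemma_1:form_1}). Conjugating the series term by term and using that $\overline{\permat}=\tfrac{1}{2}M - iT = -\permat + M$, one gets
\begin{equation*}
 \overline{\Thefunc{\zbf}{\permat}} = \sum_{\mbf\in\Zbb^g} \exp\!\bigl[\pi i\,\tp{\mbf}\permat\mbf - \pi i\,\tp{\mbf}M\mbf - 2\pi i\,\tp{\mbf}\overline{\zbf}\bigr].
\end{equation*}
The key arithmetic input is that $\tp{\mbf}M\mbf$ is an even integer for every $\mbf\in\Zbb^g$: because $(g,\lambda,1)$ is orthosymmetric with $\varepsilon=1$, the explicit form (\ref{intro:14b}) gives $M = \smallmat{0 & \mathrm{Id}_{\lambda_0} \\ \mathrm{Id}_{\lambda_0} & 0}$ in the top-left $\lambda\times\lambda$ block and zero elsewhere, so $\tp{\mbf}M\mbf = 2\sum_{j=1}^{\lambda_0} m_j m_{j+\lambda_0} \in 2\Zbb$. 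Hence the factor $\exp[-\pi i\,\tp{\mbf}M\mbf]$ equals $1$, and a second substitution $\mbf \mapsto -\mbf$ turns the remaining expression into $\Thefunc{\overline{\zbf}}{\permat}$.

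Finally, reality on $\Rsp$ is immediate from (\ref{rvt:lemma_1:form_1}) since $\overline{\zbf}=\zbf$ there, while reality on $\Isp$ follows by combining (\ref{rvt:lemma_1:form_1}) with the evenness established in the first step: for $\zbf=i\xbf$ with $\xbf\in\Rbb^g$ one has $\overline{\zbf}=-\zbf$, so $\overline{\Thf(\zbf)} = \Thf(-\zbf) = \Thf(\zbf)$. No serious obstacle is expected; the only nontrivial point is the parity of $\tp{\mbf}M\mbf$, which hinges entirely on the orthosymmetric assumption $\varepsilon=1$ and would fail in the diasymmetric case where $M$ has diagonal entries equal to $1$.
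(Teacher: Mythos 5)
Your proof is correct and follows essentially the same route as the paper's: term-by-term conjugation of the series using $\overline{\permat}=M-\permat$, the observation that $\tp{\mbf}M\mbf$ is even for the orthosymmetric $M$ (so the extra factor is $1$), the substitution $\mbf\mapsto-\mbf$, and evenness of $\Thf$ to handle $\Isp$. You are merely a bit more explicit than the paper about the reindexing step and about why the orthosymmetric hypothesis is needed, which is a welcome clarification rather than a deviation.
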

\begin{proof}
 Recall that 
 \begin{align}
  \overline{\permat} = M - \permat.
 \end{align}
 This yields 
 \begin{align}
  \overline{\Thefunc{\zbf}{\permat}} 
  &=\overline{\sum_{\mbf\in \Zbb^g}\exp (\pi i \tp{\mbf}\permat\mbf + 2\pi i \tp{\mbf}\zbf})\\
  &=\sum_{\mbf\in\Zbb^g} \exp (-\pi i \tp{\mbf}\overline{\permat}\mbf - 2\pi i \tp{\mbf}\overline{\zbf})\\
  &=\sum_{\mbf\in \Zbb^g} \exp \ls -\pi i \tp{\mbf}(M-\permat)\mbf - 2\pi i \tp{\mbf}\overline{\zbf} \rs\\
  &=\sum_{\mbf\in \Zbb^g} \exp \ls -\pi i \tp{\mbf}M\mbf + \pi i \tp{\mbf}\permat \mbf -2\pi i \tp{\mbf}\overline{\zbf}\rs \\
  &=\sum_{\mbf\in \Zbb^g} \exp(\pi i \tp{\mbf}\permat \mbf - 2\pi i \tp{\mbf}\overline{\zbf})\\
  &= \Thefunc{\overline{\zbf}}{\permat}
 \end{align}
 The parity of the function $\Thf$ is a general, well-known fact. In view of this last one, (\ref{rvt:lemma_1:form_1}) implies
 \begin{align}\label{rvt:lemma_1:form_2}
  \overline{\Thefunc{\zbf}{\permat}} = \Thefunc{-\overline{\zbf}}{\permat}.
 \end{align}
 The reality of $\Thf$ on $\Rsp$ and $\Isp$ follows from (\ref{rvt:lemma_1:form_1}) and (\ref{rvt:lemma_1:form_2}) respectively, after 
 recalling that these last ones are the loci of vectors of $\Cbb^g$ with real and imaginary entries respectively.
\end{proof}

Notice that the elements of the indexed families $\Ocal(\permat)$ and $\Ecal(\permat)$ have the form $\Thefunc{\beta/2}{\permat}$. In view of the previous lemma, these last ones are real, provided that $\permat$ belongs to some real Siegel upper-half space $\realsiegelnow{g}{\lambda}{1}$ space of orthosymmetric type. This concludes the proof of theorem \ref{theorem_1}.


\section{Sign and zeros of the theta function in the real framework}

 \label{rvt}

 We continue the study of the theta function in the real framework. The results of this section hold for all matrices of some appropriate real Siegel upper-half spaces, without restriction to real period matrices. We start with the following
 \begin{theorem}[See Thm. \ref{intro:thm_last}]
 \label{rvt:thm_1}
  Fix any orthosymmetric real type $(g,\lambda,1)$, with $\lambda>0$. Introduce the set
  \begin{align} \label{rvt:form_1}
   \setbnow{g}{\lambda}{1} = \lb 
   \bsbeta \,\, | \,\, \bsbeta \in \{0,1\}^g; \,\,
   \beta_k\beta_{\lambda_0+k}=0,\, k=1,2,\ldots\lambda_0;\,\,
   \beta_{\lambda+h}=0,\, h=1,2,\ldots,g-\lambda
   \rb
  \end{align}
  Let $\permat$ be any real Riemann matrix in $\realsiegelnow{g}{\lambda}{1}$. One has
  \begin{align} \label{rvt:form_2}
   \sum_{\bsbeta\in \setbnow{g}{\lambda}{1}} \Thefunc{\nicefrac{1}{2}\bsbeta}{\permat} > 0.  
  \end{align}
  In particular, for every $\permat\in \realsiegelnow{g}{\lambda}{1}$ there exists $\bsbeta \in \setbnow{g}{\lambda}{1}$ such that 
  \begin{align}\label{rvt:form_3}
   \Thefunc{\nicefrac{1}{2}\bsbeta}{\permat} >0.
  \end{align}
 \end{theorem}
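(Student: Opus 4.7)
The plan is to compute the sum in \eqref{rvt:form_2} explicitly by interchanging the finite sum over $\bsbeta\in\setbnow{g}{\lambda}{1}$ with the lattice sum defining the theta function, and to show that the resulting coefficients of each exponential $e^{-\pi\tp{\mbf}T\mbf}$ are non-negative, with the $\mbf=\mathbf{0}$ term giving a strictly positive contribution. Concretely, write $\permat=\nicefrac{1}{2}M+iT$ with $M=\matmnow{g}{\lambda}{1}$ and $T$ real, symmetric and positive definite. Using definition \eqref{def_theta}, one has
\begin{align*}
  \Thefunc{\nicefrac{1}{2}\bsbeta}{\permat}
  =\sum_{\mbf\in\Zbb^g}
  \exp\!\bigl(\tfrac{\pi i}{2}\tp{\mbf}M\mbf-\pi\tp{\mbf}T\mbf+\pi i\tp{\mbf}\bsbeta\bigr).
\end{align*}

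The explicit form \eqref{intro:14} of $M$ gives $\tp{\mbf}M\mbf=2\sum_{k=1}^{\lambda_0}m_k m_{\lambda_0+k}$, so $\exp(\tfrac{\pi i}{2}\tp{\mbf}M\mbf)=\prod_{k=1}^{\lambda_0}(-1)^{m_k m_{\lambda_0+k}}$. Next, I would exchange the order of summation. Since every $\bsbeta\in\setbnow{g}{\lambda}{1}$ has $\beta_{\lambda+1}=\cdots=\beta_g=0$ and, for each $k\in\{1,\ldots,\lambda_0\}$, the pair $(\beta_k,\beta_{\lambda_0+k})$ takes independently one of the three values $(0,0),(1,0),(0,1)$, the sum over $\bsbeta$ factorizes:
\begin{align*}
  \sum_{\bsbeta\in\setbnow{g}{\lambda}{1}} e^{\pi i\tp{\mbf}\bsbeta}
  =\prod_{k=1}^{\lambda_0}\bigl(1+(-1)^{m_k}+(-1)^{m_{\lambda_0+k}}\bigr).
\end{align*}

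Combining with the sign factor from $\tp{\mbf}M\mbf$, the coefficient of $e^{-\pi\tp{\mbf}T\mbf}$ becomes $\prod_{k=1}^{\lambda_0}f(m_k,m_{\lambda_0+k})$ with $f(a,b):=(-1)^{ab}\bigl(1+(-1)^a+(-1)^b\bigr)$. A case analysis on the parities of $a,b\in\Zbb$ gives $f(a,b)=3$ when both $a$ and $b$ are even and $f(a,b)=1$ otherwise; in particular $f$ is always strictly positive. Therefore
\begin{align*}
  \sum_{\bsbeta\in\setbnow{g}{\lambda}{1}}\Thefunc{\nicefrac{1}{2}\bsbeta}{\permat}
  =\sum_{\mbf\in\Zbb^g} e^{-\pi\tp{\mbf}T\mbf}\prod_{k=1}^{\lambda_0}f(m_k,m_{\lambda_0+k})
\end{align*}
is a sum of strictly positive terms, hence strictly positive. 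The $\mbf=\mathbf{0}$ term alone contributes $3^{\lambda_0}>0$, so even absolute convergence (which follows from positive definiteness of $T$) suffices.

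This argument is essentially a direct computation: no topological or geometric input is needed, and the positivity of $T$ plus the orthosymmetric structure of $M$ do all the work. The only place where one has to be slightly careful is in verifying that both the sign $(-1)^{m_k m_{\lambda_0+k}}$ coming from $M$ and the parity pattern of $1+(-1)^{m_k}+(-1)^{m_{\lambda_0+k}}$ conspire so that $f$ is everywhere non-negative; I do not anticipate a genuine obstacle elsewhere. The second assertion \eqref{rvt:form_3} is then an immediate consequence: some summand on the left of \eqref{rvt:form_2} must be strictly positive.
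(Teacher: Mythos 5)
Your proof is correct, and its overall skeleton coincides with the paper's: both interchange the finite sum over $\bsbeta$ with the lattice sum, factor out $e^{-\pi\tp{\mbf}T\mbf}$, and reduce the claim to the strict positivity, for every $\mbf\in\Zbb^g$, of the inner character sum $\sum_{\bsbeta\in\setbnow{g}{\lambda}{1}}\exp[\pi i(\nicefrac{1}{2}\tp{\mbf}\matmnow{g}{\lambda}{1}\mbf+\tp{\mbf}\bsbeta)]$. Where you diverge is in how that positivity is established. The paper first projects onto the first $\lambda$ coordinates, then counts the number $\nneg{\mbf}$ of $\bsbeta$ contributing $-1$ and proves by induction on $\lambda_0$ that $\nneg{\mbf}\leq(3^{\lambda_0}-1)/2$, so the inner sum $3^{\lambda_0}-2\nneg{\mbf}$ is at least $1$. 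You instead observe that the inner sum factorizes completely as $\prod_{k=1}^{\lambda_0}f(m_k,m_{\lambda_0+k})$ with $f(a,b)=(-1)^{ab}(1+(-1)^a+(-1)^b)\in\{1,3\}$, which is sharper (it gives the exact value of the inner sum, namely $3^{j}$ where $j$ is the number of indices $k$ with both $m_k$ and $m_{\lambda_0+k}$ even) and dispenses with the induction; your base-case table for $f$ agrees with the paper's $\lambda_0=1$ computation. The interchange of summations is harmless since the $\bsbeta$-sum is finite and the lattice sum converges absolutely by positive definiteness of $T$. Both arguments are purely computational, so nothing is lost either way; your version is the more transparent of the two.
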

 \begin{proof}[Proof of Theorem \ref{intro:thm_last}]
  Let us introduce the notation
  \begin{align}\label{rvt:form_4}
   \syms{g}{\lambda}{1}(\mbf, \bsbeta) = \exp\ls \pi i \lp \nicefrac{1}{2}\tp{\mbf}\matmnow{g}{\lambda}{1}\mbf + \tp{\mbf}\bsbeta \rp \rs, \quad\quad \mbf \in \Zbb^g, \bsbeta \in \setbnow{g}{\lambda}{1}. 
  \end{align}
  One has
  \begin{align} \label{rvt:form_5}
   \sum_{\bsbeta\in \setbnow{g}{\lambda}{1}} \Thefunc{\nicefrac{1}{2}\bsbeta}{\permat} &= \sum_{\bsbeta\in \setbnow{g}{\lambda}{1}} \sum_{\mbf\in \Zbb^g} \exp (\pi i\tp{\mbf}\permat \mbf + \pi i \tp{\mbf}\bsbeta) \\
   & = \sum_{\bsbeta\in \setbnow{g}{\lambda}{1}} \sum_{\mbf\in \Zbb^g} \exp\ls \pi i \lp \nicefrac{1}{2} \tp{\mbf}\matmnow{g}{\lambda}{1} \mbf \rp - \pi \tp{\mbf} T \mbf + \pi i \tp{\mbf}\bsbeta \rs \\
   & = \sum_{\mbf\in \Zbb^g} \ls \exp\lp -\pi \tp{\mbf}T \mbf \rp \sum_{\bsbeta\in \setbnow{g}{\lambda}{1}} \syms{g}{\lambda}{1} (\mbf, \bsbeta) \rs.
  \end{align}
  To prove the thesis, it suffices to show that 
  \begin{align}\label{rvt:form_6}
   \sum_{\bsbeta\in \setbnow{g}{\lambda}{1}} \syms{g}{\lambda}{1}(\mbf, \bsbeta) >0, \quad \quad \quad \forall \mbf\in \Zbb^g.
  \end{align}
  Denote with $\prl$ the projection of an $n$-dimensional vector on its first $\lambda$ components. Notice, that $\prl$ is a bijection from $\setbnow{n}{\lambda}{1}$ onto $\setbnow{\lambda}{\lambda}{1}$. Moreover, 
  \begin{align}\label{rvt:form_7}
   \syms{n}{\lambda}{1}(\mbf, \bsbeta) =  \syms{\lambda}{\lambda}{1} (\prl (\mbf), \prl (\bsbeta))
  \end{align}
  for any $\mbf$ in $\Zbb^g$ and any $\bsbeta$ in $\setbnow{g}{\lambda}{1}$. This implies
  \begin{align}\label{rvt:form_8}
   \sum_{\bsbeta\in \setbnow{g}{\lambda}{1}} \syms{g}{\lambda}{1}(\mathbf{m}, \bsbeta) 
   = \sum_{\bsbeta\in \setbnow{\lambda}{\lambda}{1}} \syms{\lambda}{\lambda}{1} (\prl(\mbf), \prl (\bsbeta)) 
   = \sum_{\bsbeta\in \setbnow{\lambda}{\lambda}{1}} \syms{\lambda}{\lambda}{1}(\prl(\mbf), \bsbeta).
  \end{align}
  So, (\ref{rvt:form_6}) follows, if
  \begin{align} \label{rvt:form_9}
   \sum_{\bsbeta\in \setbnow{\lambda}{\lambda}{1}} \syms{\lambda}{\lambda}{1}(\mbf, \bsbeta) > 0, \quad\quad \quad  \forall \mbf\in \Zbb^{\lambda}.
  \end{align}
  Let us prove this last one. For every $\mbf\in \Zbb^{\lambda}$, let us denote with $\npos{\mbf}$ and $\nneg{\mbf}$ the number of elements $\bsbeta$ of $\setbnow{\lambda}{\lambda}{1}$ such that $\syms{\lambda}{\lambda}{1}(\mbf, \bsbeta)$ equals $1$ or $-1$ respectively:
  \begin{subequations}\label{rvt:form_10}
   \begin{align}
    \npos{\mbf} = \mathrm{card}\lb \bsbeta \mid \bsbeta \in \setbnow{\lambda}{\lambda}{1}; \, \syms{\lambda}{\lambda}{1} (\mbf, \bsbeta)=1 \rb,
   \end{align}
   \begin{align} 
    \nneg{\mbf} = \mathrm{card}\lb \bsbeta \mid \bsbeta \in \setbnow{\lambda}{\lambda}{1}; \, \syms{\lambda}{\lambda}{1} (\mbf, \bsbeta)=-1 \rb.
   \end{align}
  \end{subequations}
  The cardinality of $\setbnow{\lambda}{\lambda}{1}$ is $3^{\lambda_0}$. Indeed, its elements can be described as the vectors $\bsbeta$ of $\{0,1\}^{\lambda}$ such that $(\beta_k, \beta_{\lambda_0+k})$ belongs to $\{(0,0),(0,1),(1,0) \}$, for $k=1,2,\ldots,\lambda_0$. 
  As $\syms{\lambda}{\lambda}{1}(\mbf,\bsbeta)$ can only take values 1 or -1, these facts imply
  \begin{align} \label{rvt:form_11}
   \sum_{\bsbeta\in \setbnow{\lambda}{\lambda}{1}} \syms{\lambda}{\lambda}{1} = \npos{\mbf} - \nneg{\mbf} = 3^{\lambda_0} - 2 \nneg{\mbf}.
  \end{align}
  Let us prove that 
  \begin{align} \label{rvt:form_12}
   \nneg{\mbf} \leq \frac{3^{\lambda_0}-1}{2}, \quad\quad\quad \mbf\in \Zbb^{\lambda}.
  \end{align}
  We proceed by induction on $\lambda_0= \nicefrac{1}{2}\lambda$. For $\lambda_0=1$, one has 
  \begin{align} \label{rvt:form_13}
   \syms{2}{2}{1}(\mbf,\bsbeta) = \exp \ls \pi i \lp m_1m_2 + m_1\beta_1 +m_2\beta_2 \rp \rs 
  \end{align}
  for $\mbf=\tp{(m_1,m_2)}$ in $ \Zbb^2$ and $\bsbeta=\tp{(\beta_1,\beta_2)}$ in 
  $ \setbnow{2}{2}{1}$. It follows, that in this case $\nneg{\mbf}$ is 0 if both $m_1$ and $m_2$ are even and 1 otherwise. This confirms (\ref{rvt:form_9}). Consider now $\lambda_0>1$. Introduce the projections 
  \begin{align} \label{rvt:form_14}
   \prt: \mbf \longrightarrow (m_1,m_{\lambda_0+1})\in \Zbb^2, \quad\quad \quad \prlmt: \mbf \longrightarrow (m_2, \ldots, m_{\lambda_0},m_{\lambda_0+2}, \ldots, m_{\lambda}) \in \Zbb^{\lambda-2}
  \end{align}
  for any $\mbf\in \Zbb^{\lambda}$. One has
  \begin{align}
   \syms{\lambda}{\lambda}{1}(\mbf, \bsbeta) 
   &= \exp \ls \pi i \sum_{k=1}^{\lambda_0}\lp m_k m_{\lambda_0+k} + m_k \beta_k + m_{\lambda_0+k}\beta_{\lambda_0+k} \rp \rs \\
   &= \exp\ls \pi i \lp m_1 m_{\lambda_0+1} + m_1\beta_1 + m_{\lambda_0+1}\beta_{\lambda_0+1} \rp \rs 
   \exp \ls \pi i \sum_{k=2}^{\lambda_0}\lp m_km_{\lambda_0+k} + m_k\beta_k + m_{\lambda_0+k}\beta_{\lambda_0+k} \rp \rs\\
   &= \syms{2}{2}{1}(\prt(\mbf),\prt (\bsbeta))\cdot\syms{\lambda-2}{\lambda-2}{1}(\prlmt(\mbf), \prlmt(\bsbeta)).
  \end{align}
  In view of this decomposition, for any given $\mbf\in \Zbb^{\lambda}$, the vectors $\bsbeta$ of $\setbnow{\lambda}{\lambda}{1}$ such that $\syms{\lambda}{\lambda}{1}(\mbf, \bsbeta)=-1$ are exactly the ones satisfying either of the two following conditions:
  \begin{align}
   \begin{cases}
    \syms{2}{2}{1}(\prt(\mbf), \prt (\bsbeta)) = 1 & \\
    \syms{\lambda-2}{\lambda-2}{1} (\prlmt (\bsbeta), \prlmt (\bsbeta)) = -1 & 
   \end{cases}
   \quad\quad\mathrm{or}\quad\quad
   \begin{cases}
    \syms{2}{2}{1}(\prt(\mbf), \prt (\bsbeta)) = -1 & \\
    \syms{\lambda-2}{\lambda-2}{1} (\prlmt (\bsbeta), \prlmt (\bsbeta)) = 1 & 
   \end{cases}.
  \end{align}
  Moreover, the map
  \begin{align}
   \bsbeta \longrightarrow \lp \prt(\bsbeta), \prlmt (\bsbeta) \rp
  \end{align}
  is a bijection from $\setbnow{\lambda}{\lambda}{1}$ to $\setbnow{2}{2}{1}\times \setbnow{\lambda-2}{\lambda-2}{1}$. As a consequence of these facts and of the inductive hypothesis, for every vector $\mbf$ in $\Zbb^{\lambda}$ one has
  \begin{align}
   \nneg{\mbf} 
   &= \npos{\prt(\mbf)}\nneg{\prlmt(\mbf)} + \nneg{\prt(\mbf)}\npos{\prlmt(\mbf)}\\
   &= \ls 3 -\nneg{\prt(\mbf)} \rs \nneg{\prlmt(\mbf)} + \nneg{\prt(\mbf)}\ls 3^{\lambda_0-1}-\nneg{\prlmt(\mbf)} \rs\\
   &= 3\nneg{\prlmt(\mbf)} + \nneg{\prt(\mbf)} \ls 3^{\lambda_0-1}-2\nneg{\prlmt(\mbf)} \rs\\
   &\leq 3\nneg{\prlmt(\mbf)} + 3^{\lambda_0-1} - 2\nneg{\prlmt(\mbf)}\\
   &= \nneg{\prlmt(\mbf)} + 3^{\lambda_0-1}\\
   & \leq \frac{3^{\lambda_0-1}-1}{2} + 3^{\lambda_0-1} = \frac{3^{\lambda_0}-1}{2}
  \end{align}
  This proves the theorem. 
 \end{proof}

 Notice that $\mathcal{B}_{g,\lambda,1}$ contains exactly one representative for each class of the set $\mathcal{T}_{g,\lambda,1}$ defined in (\ref{def_T_cal}). So, inequalities (\ref{ineq_theta_intro}) and (\ref{rvt:form_2}) are equivalent and theorem \ref{intro:thm_last} follows. We will now exploit the previous result to deduce the existence of points in the spaces $\Isp$ and $\Rsp$ at which the theta function takes a strictly positive value. The following, simple lemma will play a crucial role for the proof of theorem \ref{theorem_2}.  
 \begin{lemma} \label{rvt:lemma_2} 
  Fix an orthosymmetric real type $(g, \lambda,1)$, with $\lambda>0$. Denote with $M$ the corresponding matrix $\matmnow{g}{\lambda}{1}$. Let $\permat$ be a real Riemann 
  matrix in $\realsiegelnow{g}{\lambda}{1}$ and $\bsbeta$ be any vector in $\Zbb^g$. One has\footnote{
  Here we use the convention 
  \begin{align}  \label{rvt:form_2001}
   \sgn x = \begin{cases}
             1 & \text{if } x>0 \\
             0 & \text{if } x=0 \\
             -1 & \text{if } x<0
            \end{cases},
  \quad\quad\quad\quad\quad x\in \Rbb.
  \end{align}
  }
  \begin{align} \label{rvt:form_2002}
   \sgn \Thefunc{\permat(-M\bsbeta)+ \nicefrac{1}{2}\bsbeta}{\permat} 
   = 
   \begin{cases}
    \sgn \Thefunc{\nicefrac{1}{2}\bsbeta}{\permat} & \text{if } \tp{\bsbeta}M\bsbeta \equiv 0 \mod 4\\
    -\sgn \Thefunc{\nicefrac{1}{2}\bsbeta}{\permat} & \text{if } \tp{\bsbeta}M \bsbeta \equiv 2 \mod 4 
   \end{cases}.
  \end{align}
 \end{lemma}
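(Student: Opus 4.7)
The plan is to deduce the statement from a single application of the quasi-periodicity of the theta function, together with the explicit form $\permat=\nicefrac{1}{2}M+iT$ of matrices in $\realsiegelnow{g}{\lambda}{1}$. Specifically, from the identity
\[
\Thefunc{\zbf+\permat\mbf}{\permat}=\exp\bigl(-\pi i\,\tp{\mbf}\permat\mbf-2\pi i\,\tp{\mbf}\zbf\bigr)\,\Thefunc{\zbf}{\permat},\qquad \zbf\in\Cbb^g,\ \mbf\in\Zbb^g,
\]
applied with $\zbf=\nicefrac{1}{2}\bsbeta$ and $\mbf=-M\bsbeta$, I would immediately write
\[
\Thefunc{\permat(-M\bsbeta)+\nicefrac{1}{2}\bsbeta}{\permat}=\exp\bigl(-\pi i\,\tp{\bsbeta}M\permat M\bsbeta+\pi i\,\tp{\bsbeta}M\bsbeta\bigr)\,\Thefunc{\nicefrac{1}{2}\bsbeta}{\permat}.
\]

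The next step is to simplify the prefactor by separating the real and imaginary parts of $\permat$. Plugging $\permat=\nicefrac{1}{2}M+iT$ into the exponent yields
\[
\pi\,\tp{\bsbeta}MTM\bsbeta+\pi i\bigl(\tp{\bsbeta}M\bsbeta-\nicefrac{1}{2}\tp{\bsbeta}M^{3}\bsbeta\bigr).
\]
Here I would invoke the orthosymmetric structure of $M=\matmnow{g}{\lambda}{1}$: the block form together with $M_{0}^{2}=\mathrm{Id}_{\lambda}$ gives $M^{2}=\mathrm{diag}(\mathrm{Id}_{\lambda},\mathbf{0})$, so that $M^{3}=M$. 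Consequently the imaginary part of the exponent collapses to $\nicefrac{\pi i}{2}\,\tp{\bsbeta}M\bsbeta$, and the prefactor factorizes as
\[
\exp(\pi\,\tp{\bsbeta}MTM\bsbeta)\cdot\exp\bigl(\nicefrac{\pi i}{2}\,\tp{\bsbeta}M\bsbeta\bigr).
\]

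Since $T$ is positive definite, the first factor is a strictly positive real number. For the second factor, one observes that for the orthosymmetric form $\tp{\bsbeta}M\bsbeta=2\sum_{k=1}^{\lambda_{0}}\beta_{k}\beta_{\lambda_{0}+k}$, which is always an even integer — so the two cases $\tp{\bsbeta}M\bsbeta\equiv 0\pmod 4$ and $\tp{\bsbeta}M\bsbeta\equiv 2\pmod 4$ of the statement exhaust all possibilities and produce $\exp(\nicefrac{\pi i}{2}\tp{\bsbeta}M\bsbeta)=+1$ and $-1$ respectively. Combining, and using Lemma \ref{rvt:lemma_1} to guarantee that $\Thefunc{\nicefrac{1}{2}\bsbeta}{\permat}$ is real so that its sign is well-defined, yields both cases of \eqref{rvt:form_2002}.

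No serious obstacle arises: the argument is purely a bookkeeping exercise once the quasi-periodicity law is invoked. The only conceptually non-trivial ingredient is the interplay between the identity $M^{3}=M$ (forced by the orthosymmetric block form) and the automatic evenness of $\tp{\bsbeta}M\bsbeta$, which together are precisely what turns an a-priori complex phase into the real sign prescribed by the statement.
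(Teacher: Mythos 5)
Your proposal is correct and follows essentially the same route as the paper's proof: apply the quasi-periodicity law with $\bslambda=-M\bsbeta$, split $\permat=\nicefrac{1}{2}M+iT$, use $M^3=M$ and the positivity of $\exp(\pi\,\tp{\bsbeta}MTM\bsbeta)$, and read off the sign from $\tp{\bsbeta}M\bsbeta\bmod 4$. The only cosmetic difference is that you keep the phase as $\exp(+\nicefrac{\pi i}{2}\,\tp{\bsbeta}M\bsbeta)$ while the paper writes $\exp(-\nicefrac{\pi i}{2}\,\tp{\bsbeta}M\bsbeta)$; these agree because $\tp{\bsbeta}M\bsbeta$ is even, as you note.
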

 \begin{proof}
  Recall that
  \begin{align} \label{rvt:form_2003}
   \permat = \nicefrac{1}{2}M + i T
  \end{align}
  for some real, symmetric and positive definite matrix $T$. Applying the quasi-periodicity law (\ref{pertheta}) to the l.h.s. of (\ref{rvt:form_2002}) one obtains
  \begin{align}
    \Thefunc{\permat (-M\bsbeta) + \nicefrac{1}{2}\bsbeta}{\permat} 
    &= \exp\ls -2\pi i \tp{\lp- M\bsbeta\rp}\lp \nicefrac{1}{2}\bsbeta \rp - \pi i \tp{\lp-M\bsbeta\rp}\permat \lp -M\bsbeta \rp \rs \cdot \Thefunc{\nicefrac{1}{2}\bsbeta}{\permat} \\
    &= \exp \ls \pi i \tp{\bsbeta}M\bsbeta - \pi i \tp{\bsbeta}(M\permat M)\bsbeta \rs \cdot \Thefunc{\nicefrac{1}{2}\bsbeta}{\permat} \\
    &= \exp \lp -\nicefrac{1}{2}\pi i \tp{\bsbeta}M \bsbeta \rp \cdot \exp \lp \pi\tp{\bsbeta}M T M \bsbeta \rp \cdot \Thefunc{\nicefrac{1}{2}\bsbeta}{\permat}.
  \end{align}
  The second factor in the last expression is strictly positive. The thesis follows.
 \end{proof}
 
 \begin{corollary}\label{rvt:coro_1} Fix an orthosymmetric real type $(g,\lambda,1)$, with $\lambda>0$. Denote with $M$ the matrix $\matmnow{g}{\lambda}{1}$ defined in (\ref{intro:14}). Let $\permat$ be a real Riemann matrix in $\realsiegelnow{g}{\lambda}{1}$. 
 There exists a point $\xbf_0$ in $\Rsp$ and a point $\zbf_0$ in $\Isp$ such that 
 \begin{align} \label{rvt:inequalities}
  \Thefunc{\xbf_0}{\permat}>0, \quad \quad \quad \Thefunc{\zbf_0}{\permat}>0.
 \end{align}
 Here $\Rsp$ and $\Isp$ are understood to indicate the linear spaces introduced in (\ref{intro:def_RI}).
 \end{corollary}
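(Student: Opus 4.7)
The plan is to extract both points $\xbf_0$ and $\zbf_0$ from a single application of Theorem \ref{rvt:thm_1}, combined with the sign-transfer encoded in Lemma \ref{rvt:lemma_2}. By Theorem \ref{rvt:thm_1}, there exists $\bsbeta \in \setbnow{g}{\lambda}{1}$ such that $\Thefunc{\nicefrac{1}{2}\bsbeta}{\permat} > 0$. Since $\bsbeta \in \{0,1\}^g \subset \Rbb^g$, the vector $\xbf_0 := \nicefrac{1}{2}\bsbeta$ automatically lies in $\Rsp$, which takes care of the first inequality in (\ref{rvt:inequalities}) with no further work.

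For the second inequality, I would feed the same $\bsbeta$ into Lemma \ref{rvt:lemma_2}. First I check the congruence condition: the defining relations of $\setbnow{g}{\lambda}{1}$ force $\beta_k\beta_{\lambda_0+k}=0$ for $k=1,\ldots,\lambda_0$ and $\beta_{\lambda+h}=0$ for $h=1,\ldots,g-\lambda$, so
\begin{align}
\tp{\bsbeta}M\bsbeta \;=\; 2\sum_{k=1}^{\lambda_0}\beta_k\beta_{\lambda_0+k} \;=\; 0 \;\equiv\; 0 \,\,\mathrm{mod}\,\, 4.
\end{align}
Lemma \ref{rvt:lemma_2} therefore yields
\begin{align}
\sgn\Thefunc{\permat(-M\bsbeta) + \nicefrac{1}{2}\bsbeta}{\permat} \;=\; \sgn\Thefunc{\nicefrac{1}{2}\bsbeta}{\permat} \;>\; 0.
\end{align}

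The only remaining point is to verify that the argument $\permat(-M\bsbeta) + \nicefrac{1}{2}\bsbeta$ actually lies in $\Isp$. Here the special block structure of $M = \matmnow{g}{\lambda}{1}$ is decisive: since $\matmnow{\lambda}{\lambda}{1}^2 = \mathrm{Id}_{\lambda}$, one has $M^2 = \ls\begin{smallmatrix}\mathrm{Id}_{\lambda} & 0 \\ 0 & 0\end{smallmatrix}\rs$, and because $\bsbeta$ is supported on its first $\lambda$ coordinates, $M^2\bsbeta = \bsbeta$. Writing $\permat = \nicefrac{1}{2}M + iT$, the real parts cancel exactly:
\begin{align}
\permat(-M\bsbeta) + \nicefrac{1}{2}\bsbeta \;=\; -\nicefrac{1}{2}M^2\bsbeta - iTM\bsbeta + \nicefrac{1}{2}\bsbeta \;=\; -iTM\bsbeta \;\in\; \Isp.
\end{align}
Setting $\zbf_0 := -iTM\bsbeta$ then yields the second inequality. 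I do not anticipate any real obstacle in this argument: everything reduces to observing that the congruence hypothesis of Lemma \ref{rvt:lemma_2} is trivially satisfied on $\setbnow{g}{\lambda}{1}$, and that the involution $\bsbeta \mapsto \permat(-M\bsbeta) + \nicefrac{1}{2}\bsbeta$ maps such a real half-period precisely into $\Isp$ thanks to the identity $M^2\bsbeta=\bsbeta$.
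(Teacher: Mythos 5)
Your proposal is correct and follows essentially the same route as the paper: both take the $\bsbeta\in\setbnow{g}{\lambda}{1}$ supplied by Theorem \ref{rvt:thm_1}, set $\xbf_0=\nicefrac{1}{2}\bsbeta$, and then use Lemma \ref{rvt:lemma_2} together with the identity $M(M\bsbeta)=\bsbeta$ to land the point $\zbf_0=(\permat-\nicefrac{1}{2}M)(-M\bsbeta)$ in $\Isp$ with the same sign. Your explicit verification that $\tp{\bsbeta}M\bsbeta=0$ on $\setbnow{g}{\lambda}{1}$ is a detail the paper leaves implicit, but the argument is identical.
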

 \begin{proof}
  According to theorem \ref{rvt:thm_1} there exists $\bsbeta\in \lb0,1\rb^g$ satisfying the conditions
  \begin{align} \label{conditions}
   \tp{\bsbeta} M \bsbeta = 0, \quad\quad \beta_{\lambda+1}=\beta_{\lambda+2}= \ldots = \beta_g =0,
  \end{align}
  such that
  \begin{align}
   \Thefunc{\nicefrac{1}{2}\bsbeta}{\permat}>0.
  \end{align}
  Put $\xbf_0= \nicefrac{1}{2}\bsbeta$. This last one belongs to $\Rsp$ and satisfies the first inequality in (\ref{rvt:inequalities}). Now notice that the second part of conditions (\ref{conditions}) implies 
  \begin{align}
   M(M\bsbeta)=\bsbeta,
  \end{align}
  and put  
  \begin{align}
   \zbf_0 = \permat(-M\bsbeta) + \nicefrac{1}{2}\bsbeta = (\permat-\nicefrac{1}{2}M)(-M\bsbeta).
  \end{align}
  This last one belongs to $\Isp$. Moreover, again in view of conditions (\ref{conditions}) one can apply lemma \ref{rvt:lemma_2} to obtain
  \begin{align}
   \sgn \Thefunc{\zbf_0}{\permat} = \sgn \Thefunc{\permat(-M\bsbeta)+\nicefrac{1}{2}\bsbeta}{\permat} = \sgn \Thefunc{\nicefrac{1}{2}\bsbeta}{\permat} =1.
  \end{align}
  Consequently, $\zbf_0$ satisfies the second inequality in (\ref{rvt:inequalities}).
 \end{proof}


In the remaining part of this section, we will investigate the zeros of the theta function in the real framework. To that purpose, let us recall some well-known facts. Fix any real type $(g,\lambda,\varepsilon)$ and let $\permat$ be a matrix of $\realsiegel$. Consider the real ppav determined by $\permat$ via (\ref{concrete_form}). We will need to consider 
the locus of real and imaginary points on it. That is, of points $[\zbf]_{\Lambda(\permat)}$ in $\Cbb^g/\Lambda(\permat)$ satisfying the equations 
\begin{align}
 S_{conj}([\zbf]_{\Lambda(\permat)}) = [\zbf]_{\Lambda (\permat)} 
 \quad\quad \quad \mbox{or} \quad\quad\quad 
 S_{conj}([\zbf]_{\Lambda(\permat)}) = -[\zbf]_{\Lambda (\permat)} 
\end{align}
respectively. Making use of the sets $\Rsp$ and $\Isp$ defined in (\ref{intro:def_RI}), let us introduce
\begin{align}
 \Rcmp(\permat) = \{ [ \zbf ]_{\Lambda(\permat)} \mid \zbf \in \Rsp \}, 
 \quad\quad\quad
 \Icmp(\permat) = \{ [ \zbf ]_{\Lambda(\permat)} \mid \zbf \in \Isp \}.
\end{align}
In view of definition (\ref{concrete_form_b}), all points of these last ones are real resp. imaginary on the real ppav. If $\lambda<g$, introduce
\begin{align} \label{def_RI_q}
 \Rsp_{\qbf}(\permat) = \lb \permat \bsalpha + \bsbeta \mid \bsalpha, \bsbeta \in \Rbb^g, 2\bsalpha = \left[\begin{smallmatrix} \mathbf{0}\\ \qbf\end{smallmatrix}\right] \rb,
 \quad\quad\quad
 \Isp_{\qbf}(\permat) = \lb \permat \bsalpha + \bsbeta \mid \bsalpha,\bsbeta \in \Rbb^g, M\bsalpha+2\bsbeta = \left[\begin{smallmatrix} \mathbf{0}\\ \qbf\end{smallmatrix}\right] \rb
\end{align}
for $\qbf\in \{ 0,1 \}^{g-\lambda}$ with $\qbf\neq \mathbf{0}$ and $M$ equal to $\matm$. Introduce also the corresponding 
\begin{align} 
 \Rcmp_{\qbf}(\permat) = \{[\zbf]_{\Lambda(\permat)} \mid \zbf\in \Rsp_{\qbf}\}, 
 \quad\quad\quad
 \Icmp_{\qbf}(\permat) = \{[\zbf]_{\Lambda(\permat)} \mid \zbf\in \Isp_{\qbf}\}.
\end{align}
The locus of real (resp. imaginary) points has $2^{g-\lambda}$ connected components given by $\Rcmp(\permat)$ (resp. $\Icmp(\permat)$) 
together with the $(2^{g-\lambda}-1)$ sets $\Rcmp_{\qbf}(\permat)$ (resp. $\Icmp_{\qbf}(\permat)$) if $\lambda<g$ (see \cite{Com155}). \\
Let us recall the  well-known quasi-periodicity property of the $g$-dimensional theta function 
 \begin{align} \label{pertheta}
  \Thefunc{\zbf+\permat\bslambda + \bsmu}{\permat} = \exp (-2\pi i \tp{\bslambda}\zbf - \pi i \tp{\bslambda}\permat \bslambda)\cdot \Thefunc{\zbf}{\permat}  
  \quad \quad \quad \forall \bslambda,\bsmu \in \Zbb^g, \forall \zbf \in \Cbb^g, \forall \permat \in \siegel.
 \end{align}

\begin{theorem}\label{ztf:thm_1}
 Let $(g,\lambda,1)$ be an orthosymmetric real type, with $\lambda<g$. Denote with $\permat$ a matrix of $\realsiegelnow{g}{\lambda}{1}$. Fix a vector $\qbf$ in $\{0,1\}^{g-\lambda}$, $\qbf\neq \mathbf{0}$. The locus 
 \begin{align}
  \lb \zbf \in \Rsp_{\qbf}(\permat) \,\,| \,\, \Thefunc{\zbf}{\permat}=0 \rb
 \end{align}
 is a real analytic variety of dimension $g-1$.
\end{theorem}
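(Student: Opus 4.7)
\emph{Setup and reality of a twisted theta.} I plan to parameterize $\Rsp_{\qbf}(\permat)$ as a totally real $g$-dimensional affine subspace of $\Cbb^g$, twist $\Thf$ by a suitable phase so that the zero locus coincides with that of a nontrivial real-analytic function on $\Rbb^g$, and then argue by sign changes. Write $M=\matmnow{g}{\lambda}{1}$, $\bsalpha_0 = \tfrac{1}{2}\tp{[\tp{\mathbf{0}},\tp{\qbf}]}$, and $\mbf = 2\bsalpha_0\in\Zbb^g$. The block form (\ref{intro:14}) forces $M\bsalpha_0=\mathbf{0}$ and $M\mbf=\mathbf{0}$, so $\permat\bsalpha_0 = iT\bsalpha_0$ is purely imaginary and $\permat\mbf = 2iT\bsalpha_0$. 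In particular $\Rsp_{\qbf}(\permat) = \Rbb^g + iT\bsalpha_0$; set $\zbf(\bsbeta):=\bsbeta+iT\bsalpha_0$ and
\begin{align}
 F(\bsbeta) \;:=\; e^{i\pi\tp{\mbf}\bsbeta}\,\Thf\bigl(\zbf(\bsbeta)\bigr),\qquad \bsbeta\in\Rbb^g.
\end{align}
Because $\overline{\zbf(\bsbeta)} = \zbf(\bsbeta)-\permat\mbf$, the quasi-periodicity (\ref{pertheta}) applied with $\bslambda=-\mbf$, $\bsmu=\mathbf{0}$ (and using $\tp{\mbf}\permat\mbf = i\tp{\mbf}T\mbf$ together with $\tp{\mbf}T\bsalpha_0 = \tfrac{1}{2}\tp{\mbf}T\mbf$) reduces to $\Thf(\overline{\zbf(\bsbeta)}) = e^{2\pi i\tp{\mbf}\bsbeta}\,\Thf(\zbf(\bsbeta))$. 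Combining this with $\overline{\Thf(\zbf)}=\Thf(\overline{\zbf})$ from Lemma \ref{rvt:lemma_1} gives $\Thf(\zbf(\bsbeta)) = e^{-2\pi i\tp{\mbf}\bsbeta}\,\overline{\Thf(\zbf(\bsbeta))}$, whence $\overline{F}=F$; so $F$ is real-analytic on $\Rbb^g$.

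\emph{Non-triviality and sign change.} If $F$ vanished identically, $\Thf$ would vanish on the totally real $g$-submanifold $\Rbb^g+iT\bsalpha_0\subset\Cbb^g$; by the standard fact that a holomorphic function on $\Cbb^g$ which vanishes on a totally real submanifold of maximal dimension must vanish identically, one would deduce $\Thf\equiv 0$, a contradiction. Hence $\{F=0\}$ is a proper real-analytic subset of $\Rbb^g$, of real dimension at most $g-1$. Since $\qbf\neq\mathbf{0}$, I may pick $k$ with $q_k=1$ and set $i_0:=\lambda+k$, so $\mbf_{i_0}=1$. Translating $\bsbeta$ by the integer vector $\ebf_{i_0}$ leaves $\Thf(\zbf(\bsbeta))$ unchanged by pure periodicity, while multiplying the phase $e^{i\pi\tp{\mbf}\bsbeta}$ by $e^{i\pi}=-1$. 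Therefore $F(\bsbeta+\ebf_{i_0}) = -F(\bsbeta)$ for every $\bsbeta$, so $F$ attains strictly positive as well as strictly negative values.

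\emph{Conclusion and main obstacle.} The disjoint nonempty open sets $\{F>0\}$ and $\{F<0\}$ together exhaust $\Rbb^g\setminus\{F=0\}$, so $\{F=0\}$ disconnects $\Rbb^g$. A proper real-analytic subvariety of $\Rbb^g$ of real codimension at least two cannot separate $\Rbb^g$ (by the standard structure theory of real-analytic sets, e.g.\ Lojasiewicz), so $\{F=0\}$ has real dimension exactly $g-1$. Via the homeomorphism $\bsbeta\mapsto\zbf(\bsbeta)$ this transfers to the locus claimed in the theorem. I expect the only genuinely non-routine step to be the reality of $F$: one has to bookkeep the quasi-periodicity and the complex conjugation so that all the contributions of $\tp{\mbf}T\mbf$ cancel and precisely the phase $e^{i\pi\tp{\mbf}\bsbeta}$ survives; once that is in place, the disconnection argument and the uniqueness of holomorphic functions on totally real $g$-manifolds are classical.
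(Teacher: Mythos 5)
Your proof is correct and follows essentially the same route as the paper: you construct the same phase-twisted auxiliary function $e^{\pi i\tp{\ebf_{\qbf}}\xbf}\,\Thefunc{\xbf+\nicefrac{1}{2}\permat\ebf_{\qbf}}{\permat}$, establish its reality on $\Rbb^g$ (you route this through Lemma \ref{rvt:lemma_1} where the paper reconjugates the series directly), derive the same anti-periodicity $F(\bsbeta+\ebf_{i_0})=-F(\bsbeta)$, and conclude by the same disconnection argument. The only difference is that you explicitly justify $F\not\equiv 0$ via the identity principle on a totally real $g$-dimensional submanifold, a point the paper's proof leaves implicit but which is needed for the upper bound $\dim\leq g-1$.
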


\begin{proof}
 In view of definition (\ref{def_RI_q}) it is sufficient to prove the thesis for the locus 
 \begin{subequations}
 \label{ztf:form_1}
 \begin{align}
  \lb \xbf \in \Rbb^g \,\, | \,\, \Thefunc{\xbf+\nicefrac{1}{2}\permat\ebf_{\qbf}}{\permat}=0 \rb
 \end{align}
 where
 \begin{align}
  \ebf_{\qbf} = \left[\begin{smallmatrix} \mathbf{0}\\ \qbf\end{smallmatrix}\right] \in \{0,1\}^g.
 \end{align}
 \end{subequations}
 Introduce the auxiliary function
 \begin{align}\label{ztf:form_2}
  \Tfunc = \exp (\pi i \tp{\ebf_{\qbf}}\cdot\xbf) \Thefunc{\xbf+\nicefrac{1}{2}\permat \ebf_{\qbf}}{\permat} \quad\quad \quad \xbf \in \Rbb^g.
 \end{align}
 First let us show that 
 \begin{align}\label{ztf:form_3}
  \Tfunc\in \Rbb, \quad\quad\quad \forall \xbf \in \Rbb^g.
 \end{align}
 Indeed, denote with $M$ the matrix $\matmnow{g}{\lambda}{1}$ defined in (\ref{intro:14}). As 
 \begin{align}\label{ztf:form_4}
  \overline{\permat} = M-\permat
 \end{align}
 one has
 \begin{align}
  \overline{\Thefunc{\xbf+\nicefrac{1}{2}\permat\ebf_{\qbf}}{\permat}} 
  & = \overline{\sum_{\mbf\in \Zbb^g}\exp\ls \pi i \tp{\mbf}\permat \mbf + 2\pi i \tp{\mbf}(\xbf+\nicefrac{1}{2}\permat\ebf_{\qbf}) \rs} \\
  & = \sum_{\mbf \in \Zbb^g} \exp \ls -\pi i \tp{\mbf}\overline{\permat}\mbf - 2\pi i \tp{\mbf}(\xbf + \nicefrac{1}{2}\overline{\permat}\ebf_{\qbf}) \rs \\
  & = \sum_{\mbf \in \Zbb^g} \exp \lp -\pi i \tp{\mbf}M\mbf + \pi i \tp{\mbf}\permat \mbf - 2\pi i \tp{\mbf}\xbf - \pi i \tp{\mbf}M\ebf_{\qbf} + \pi i \tp{\mbf}\permat \ebf_{\qbf} \rp\\
  & = \sum_{\mbf \in \Zbb^g} \exp \ls \pi i \tp{\mbf}\permat \mbf + 2 \pi i \tp{\mbf}\lp -\xbf + \nicefrac{1}{2}\permat \ebf_{\qbf} \rp \rs \\
  & = \Thefunc{\xbf-\nicefrac{1}{2}\permat\ebf_{\qbf}}{\permat}.
 \end{align}
 On the other side, in view of (\ref{pertheta})
 \begin{align}
  \Thefunc{\xbf-\nicefrac{1}{2}\permat\ebf_{\qbf}}{\permat} 
  & = \Thefunc{(\xbf+\nicefrac{1}{2}\permat \ebf_{\qbf})-\permat \ebf_{\qbf}}{\permat}\\
  & = \exp [ -2\pi i \tp{(-\ebf_{\qbf})}( \xbf +\nicefrac{1}{2}\permat\ebf_{\qbf} ) - \pi i \tp{\ebf_{\qbf}}\permat \ebf_{\qbf} ] \Thefunc{\xbf+\nicefrac{1}{2}\permat\ebf_{\qbf}}{\permat}\\
  & = \exp(2\pi i \tp{\ebf_{\qbf}}\xbf)\Thefunc{\xbf+\nicefrac{1}{2}\permat\ebf_{\qbf}}{\permat}.
 \end{align}
 Combining the two last equations, one obtains 
 \begin{align}
  \overline{\Thefunc{\xbf + \nicefrac{1}{2}\permat\ebf_{\qbf}}{\permat}} = \exp \lp 2\pi i \tp{\ebf_{\qbf}}\xbf \rp \Thefunc{\xbf + \nicefrac{1}{2}\permat\ebf_{\qbf}}{\permat} 
  \quad\quad \quad \xbf\in \Rbb^g.
 \end{align}
 With this last one, one deduces (\ref{ztf:form_3}). Moreover, let $\mathbf{s}$ be a vector in $\{0,1\}^{g-\lambda}$ such that $\tp{\qbf}\cdot \mathbf{s}=1$ and put 
 \begin{align}
  \ebf_{\mathbf{s}} = \left[\begin{smallmatrix} \mathbf{0}\\ \mathbf{s}\end{smallmatrix}\right] \in \{0,1\}^g.
 \end{align}
 Using again (\ref{pertheta}), with a simple manipulation of definition (\ref{ztf:form_2}) one obtains
 \begin{align}\label{ztf:rel_fond}
  \Tfuncnow{\xbf+\ebf_{\mathbf{s}}} = -\Tfunc \quad\quad \quad \xbf\in \Rbb^g.
 \end{align}
 Now notice that $T$ vanishes exactly on the analytic variety defined in (\ref{ztf:form_1}). If this last one had dimension strictly less than $g-1$, its 
 complementary set in $\Rbb^g$ would be path-connected. Consequently, $T$ would be either nonpositive or nonnegative on the whole $\Rbb^g$. But this is not possible: Let $\xbf_0$ be a point of $\Rbb^g$ such that $T(\xbf_0)\neq 0$. In view of (\ref{ztf:rel_fond}), $T(\xbf_0+\ebf_{\mathbf{s}})$ is also different from zero and has opposite sign. As $\permat$ was chosen arbitrarily in $\realsiegelnow{g}{\lambda}{1}$, the proof is complete.
\end{proof}

\begin{lemma} \label{ztf:lemma_1}
 Fix any type $(g,\lambda,\varepsilon)$, with $g\geq 2$. Denote with M the corresponding matrix $\matm$ defined in (\ref{intro:14}). Let $\permat, \tilde{\permat}$ be real Riemann matrices of the real Siegel upper-half space $\realsiegel$. Assume that for some $G\in\realgroup$ one has 
 \begin{align}\label{ztf:lemma_1:form_0}
  \tilde{\permat} = \mfrak(G,\permat).
 \end{align}
 The function $\Thf$ has zeros on the imaginary space $\Isp$ defined in (\ref{intro:def_RI}) if and only if $\Thftilde$ does.
\end{lemma}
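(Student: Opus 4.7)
The plan is to use Theorem~\ref{itc:prop_20} to transfer the question from $\Thftilde$ to $\Thf$, and then to absorb the resulting real shift into a lattice translation via the quasi-periodicity~(\ref{pertheta}).

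Write $G = \ls\begin{smallmatrix} A & \nicefrac{1}{2}(M-AM\tp A)(\tp A)^{-1} \\ \mathrm{0} & (\tp A)^{-1} \end{smallmatrix}\rs$ with $A \in \GL{g}{\Zbb}$ and $AM\tp A \equiv M \pmod 2$, and fix the integral representative $\tilde\bsbeta := -\nicefrac{1}{2}\diag(M - AM\tp A) \in \Zbb^g$. Applying Theorem~\ref{itc:prop_20} with initial characteristic $\bsbeta = \mathrm{0}$ and rewriting the resulting theta with characteristic $\chars{\mathrm{0}}{\tilde\bsbeta}$ as a shift of $\Theta$ yields
\begin{align*}
 \Thftilde\!\left(A\zbf + \nicefrac{1}{2}\tilde\bsbeta\right) = \Thf(\zbf), \qquad \zbf \in \Cbb^g.
\end{align*}
Since $A^{-1}$ is real it preserves $\Isp$; setting $\vbf := \nicefrac{1}{2}A^{-1}\tilde\bsbeta \in \Rbb^g$, this equivalence says that $\Thftilde$ has a zero on $\Isp$ iff $\Thf$ has a zero on $\Isp - \vbf$.

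Because the automorphy factor in~(\ref{pertheta}) is nowhere zero, the zero locus of $\Thf$ in $\Cbb^g$ is invariant under $\Lambda(\permat) = \Zbb^g + \permat\Zbb^g$. It therefore suffices to find $\bsmu, \bslambda \in \Zbb^g$ such that $-\vbf + \bsmu + \permat\bslambda \in \Isp$. Writing $\permat = \nicefrac{1}{2}M + iT$, the real part of this condition reduces to $\vbf = \bsmu + \nicefrac{1}{2}M\bslambda$, equivalently $A^{-1}\tilde\bsbeta \in 2\Zbb^g + M\Zbb^g$. From the explicit form of $\matm$ in~(\ref{intro:14}), $\mathrm{Im}(M \bmod 2)$ is the subgroup of $\Zbb_2^g$ whose last $g-\lambda$ coordinates vanish, so the remaining task is to prove
\begin{align*}
 (A^{-1}\tilde\bsbeta)_j \equiv 0 \pmod 2 \qquad \text{for every } j > \lambda.
\end{align*}
The boundary cases $\lambda = 0$ (forcing $M = \mathrm{0}$ and $\tilde\bsbeta = \mathrm{0}$) and $\lambda = g$ (no indices to check) are immediate.

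This congruence is exactly what Lemma~\ref{itc:lemma_20} delivers. Decompose $A = \ls\begin{smallmatrix} A_0 & A_{01} \\ A_{10} & A_1 \end{smallmatrix}\rs$ with $A_0$ of size $\lambda$. The lemma yields $A_{10} \equiv \mathrm{0} \pmod 2$ and $\diag(A_{10} \matmnow{\lambda}{\lambda}{\varepsilon} \tp A_{10}) \equiv \mathrm{0} \pmod 4$; since $M_{jj} = 0$ for $j > \lambda$, direct expansion gives $(AM\tp A)_{jj} = (A_{10}\matmnow{\lambda}{\lambda}{\varepsilon}\tp A_{10})_{j-\lambda,\,j-\lambda}$, so $\tilde\bsbeta_j \equiv 0 \pmod 2$ for $j > \lambda$. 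Writing $A^{-1} = \ls\begin{smallmatrix} B_0 & B_{01} \\ B_{10} & B_1 \end{smallmatrix}\rs$, the identity $AA^{-1} = \mathrm{Id}$ modulo $2$ combined with $A_{10} \equiv \mathrm{0} \pmod 2$ shows that $A$ is block-upper-triangular mod~$2$; hence $A_1$ is invertible mod $2$ and consequently $B_{10} \equiv \mathrm{0} \pmod 2$. For $j > \lambda$, every summand of
\begin{align*}
 (A^{-1}\tilde\bsbeta)_j = \sum_{k \leq \lambda}(B_{10})_{j-\lambda,\,k}\,\tilde\bsbeta_k + \sum_{k > \lambda}(B_1)_{j-\lambda,\,k-\lambda}\,\tilde\bsbeta_k
\end{align*}
carries an even factor, so the total is even. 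Choosing the associated $(\bsmu,\bslambda)$ then translates $\Isp - \vbf$ into $\Isp$ modulo $\Lambda(\permat)$.

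The main obstacle is precisely this last arithmetic step: a priori $\vbf$ is merely half-integral, and it is not obvious that its deviation from $\Zbb^g$ lies in $\nicefrac{1}{2}M\Zbb^g$. The block-triangular mod-$2$ structure of $A$, together with the stronger mod-$4$ control on $\diag(A_{10}\matmnow{\lambda}{\lambda}{\varepsilon}\tp A_{10})$ supplied by Lemma~\ref{itc:lemma_20}, is exactly what forces the required evenness, and the bookkeeping applies uniformly to orthosymmetric and diasymmetric types.
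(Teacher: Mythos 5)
Your proof is correct and follows essentially the same route as the paper's: Theorem \ref{itc:prop_20} transfers the zero locus between $\permat$ and $\tilde{\permat}$, and Lemma \ref{itc:lemma_20} supplies exactly the arithmetic needed to show that the residual half-integral real shift lies in $\Zbb^g + \nicefrac{1}{2}M\Zbb^g$ up to an imaginary vector, so that quasi-periodicity absorbs it. The only (harmless) difference is that you perform the lattice absorption in $\Lambda(\permat)$ after applying $A^{-1}$ — which costs you the extra mod-2 block-triangularity argument for $A^{-1}$ — whereas the paper absorbs the shift directly in $\Lambda(\tilde{\permat})$ via the integral vectors $\pbf = 2M\sbf$ and $\rbf = \sbf - \nicefrac{1}{2}M\pbf$ with $\sbf = \nicefrac{1}{4}\diag(M - AM\tp{A})$.
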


\begin{proof}
 Recall that $G$ belongs to $\realgroup$ if and only if it has the form 
 \begin{align} \label{ztf:lemma_1:form_1}
  G =   \ls  
  \begin{smallmatrix} 
  A & \nicefrac{1}{2}(M-AM\tp A)(\tp A)^{-1} \\ 
  0 & (\tp A)^{-1} 
  \end{smallmatrix} 
  \rs
 \end{align}
 for some $A\in \GL{g}{\Zbb}$ such that 
 \begin{align} \label{ztf:lemma_1:form_2}
  AM\tp{A} \equiv 0 \mod 2.
 \end{align}
 Correspondingly, condition (\ref{ztf:lemma_1:form_0}) can be recast as follows
 \begin{align}\label{ztf:lemma_1:form_3}
  \tilde{\permat} = A \permat \tp{A} + \nicefrac{1}{2}(M-AM\tp{A}).
 \end{align}
 Let us introduce the map
 \begin{align}\label{ztf:lemma_1:form_4}
  \phi(\zbf) = A \zbf + \nicefrac{1}{4}\diag (M-AM\tp{A}), \quad\quad\quad\quad \zbf\in \Cbb^g.
 \end{align}
 In view of condition (\ref{ztf:lemma_1:form_2}), the constant term on the r.h.s. belongs to $\nicefrac{1}{2}\Zbb^g$. Moreover, in view of lemma \ref{itc:lemma_20}, its last $g-\lambda$ entries are integers. So, both vectors $\pbf$ and $\rbf$ defined as follows
 \begin{align}
  \pbf = 2M\sbf, \quad\quad \rbf = \sbf - \nicefrac{1}{2}M\pbf, \quad\quad        \sbf = \nicefrac{1}{4} \diag (M-AM\tp{A})
 \end{align}
 have only integral entries. With these last ones, one can rewrite
 \begin{align} \label{ztf:lemma_1:form_5}
  \nicefrac{1}{4}\diag (M-AM\tp{A}) 
  = \nicefrac{1}{2}M\pbf + \rbf
  = (\tilde{\permat}-\nicefrac{1}{2}M)(-\pbf) + (\tilde{\permat}\pbf+\rbf).
 \end{align}
 In this way, the l.h.s. has been decomposed as the sum of an imaginary vector and an integer period. Define 
 \begin{align} \label{ztf:lemma_1:form_6}
  g(\zbf) = A\zbf + (\tilde{\permat}-\nicefrac{1}{2}M)(\pbf) \quad\quad \quad \zbf \in \Cbb^g.
 \end{align}
 One has then
 \begin{align} \label{ztf:lemma_1:form_7}
  \phi (\zbf) = g(\zbf) + (\tilde{\permat}\pbf + \rbf) \quad\quad\quad \zbf\in\Cbb^g.
 \end{align}
 Applying theorem \ref{itc:prop_20} and identity (\ref{pertheta}) one obtains 
 \begin{align} \label{ztf:lemma_1:form_8}
  \Thefunc{\zbf}{\permat} &= \Thefunccar{\zbf}{\permat}{\mathrm{0}}{\mathrm{0}} \\
  & = \Thefunccar{A\zbf}{\tilde{\permat}}{\mathrm{0}}{\nicefrac{1}{2}\diag(M-AM\tp{A})} \\
  & = \Thefunc{\phi(\zbf)}{\tilde{\permat}} \\
  & = \Thefunc{g(\zbf)+\tilde{\permat}\pbf+\rbf}{\tilde{\permat}} \\
  & = c(\zbf)\Thefunc{g(\zbf)}{\tilde{\permat}},
 \end{align} 
 for all $\zbf\in \Cbb^g$, $c(\zbf)$ being defined as the never vanishing quantity 
 \begin{align}
  c(\zbf) = \exp [-2\pi i \tp{\mathbf{p}}g(\zbf) - \pi i \tp{\mathbf{p}}\permat \mathbf{p}]
 \end{align}
 It follows that $\Thefunc{\zbf}{\permat}$ vanishes if and only if $\Thefunc{g(\zbf)}{\tilde{\permat}}$ does. 
 Now notice that $g$ reduces to a bijection of the space $\Isp$ into itself. Indeed, the constant term on the r.h.s. of its 
 definition (\ref{ztf:lemma_1:form_6}) belongs to this last one and the matrix $A$ is invertible and real. This concludes the proof. 
\end{proof}


\section{Theta divisor of real Jacobians}

\label{ztf}

This section is dedicated to the study the locus of real and imaginary points of the theta-divisor of real Jacobians (Theorem \ref{intro:th_zeri_RI}). We will first consider separated topological types. 
\begin{proposition}[See Thm. \ref{intro:th_zeri_RI}] \label{ztf:prop_100}
 Let $(\Gamma, \sigma)$ be a real Riemann surface of separated topological type $(g,k,1)$ with $g\geq 2$. Let $\permat$ be a real period matrix of its. One has
 \begin{align}
  \Thefunc{\zbf}{\permat}\neq 0 \quad\quad\quad \forall \zbf \in \Isp.
 \end{align}
\end{proposition}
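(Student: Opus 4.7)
The plan is to argue by contradiction, combining Riemann's vanishing theorem with a careful analysis of how the Abel map and the vector of Riemann constants behave under complex conjugation in a real basis, and then closing with a topological argument that crucially uses the separating property $\delta = 1$. Suppose that there exists $\zbf_0 \in \Isp$ with $\Thefunc{\zbf_0}{\permat} = 0$. By Riemann's theorem, there is an effective divisor $D = P_1 + \ldots + P_{g-1}$ and a vector of Riemann constants $K_{P_0}$, computed with respect to a base point $P_0$ and the real basis underlying $\permat$, such that
\begin{align}
 \zbf_0 \equiv \abelmap(D) - K_{P_0} \pmod{\La{\permat}}.
\end{align}

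Next I would establish two conjugation laws in the real basis. From $\sigma_\star a_j = a_j$ and $\sigma_\star b_j = -b_j + \sum_k \matmnow{g}{\lambda}{1}_{jk} a_k$ one derives, for the normalized holomorphic differentials, that $\overline{\abelmap(P)} \equiv \abelmap(\sigma P) + \boldsymbol{\gamma}(P_0) \pmod{\La{\permat}}$ and a companion law $\overline{K_{P_0}} \equiv -K_{P_0} + \boldsymbol{\kappa}(P_0) \pmod{\La{\permat}}$, where the correction terms are explicit half-periods built from $\matmnow{g}{\lambda}{1}$ and controllable by choosing $P_0$ to be a real point (which exists since $k \geq 1$ in the separated case). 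Conjugating the Riemann relation, using $\overline{\zbf_0} \equiv -\zbf_0$, and combining the two conjugation laws, one extracts the linear equivalence
\begin{align}
 D + \sigma D \sim C_\sigma,
\end{align}
where $C_\sigma$ is a canonical divisor on $\Gamma$ that can be arranged to be $\sigma$-invariant. Note that the degrees match: $2(g-1) = 2g - 2 = \deg K_\Gamma$.

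The principal obstacle, and the step where $\delta = 1$ is essential, is to derive a contradiction from this linear equivalence. Since $C_\sigma$ is canonical and $\sigma$-invariant, it is the divisor $(\omega)$ of a holomorphic differential $\omega$ whose restriction to the ovals is real. For separated topological type $(g,k,1)$, the real locus $\Fix(\sigma)$ disconnects $\Gamma$ into two bordered Riemann surfaces $\Gamma^+$ and $\Gamma^-$ of genus $(g+1-k)/2$, each with $k$ boundary ovals, interchanged by $\sigma$. Writing $D = D^+ + D^- + D^{\mathrm{ov}}$ according to whether the points of $D$ lie in the interior of $\Gamma^+$, of $\Gamma^-$, or on an oval, the equality $D + \sigma D = (\omega)$ forces that $\omega|_{\Gamma^+}$ has precisely $\deg D^+ + \deg D^{\mathrm{ov}}$ zeros in $\Gamma^+$, with the remaining zeros lying on the ovals in pairs. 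I expect the rigid count of zeros of a real holomorphic differential on $\Gamma^+$, combined with the parity constraint $k \equiv g+1 \pmod 2$, to be incompatible with $\deg D = g-1$, yielding the desired contradiction.

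A cleaner variant of the argument is to invoke Lemma \ref{ztf:lemma_1}: since the vanishing of $\Thf$ on $\Isp$ is preserved under the action of $\realgroupnow{g}{\lambda}{1}$, it suffices to verify the statement in any one real basis adapted to $(\Gamma,\sigma)$. Choosing a basis in which $k$ of the $a$-cycles coincide with the ovals reduces the above counting to a transparent statement about $\omega$ on the bordered surface $\Gamma^+$ via the classical doubling construction, and the contradiction becomes a direct consequence of the Riemann-Hurwitz formula applied to the double of $\Gamma^+$.
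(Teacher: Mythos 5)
Your strategy is genuinely different from the paper's. The paper does not reprove the adapted-basis case at all: it observes that when the cycles $a_{\lambda+1},\dots,a_g$ are homologous to ovals the non-vanishing of $\Thf$ on $\Isp$ is already in the literature (Dubrovin--Natanzon, Fay), and its only new content is the reduction of an arbitrary real basis to such a basis via the modular invariance of ``$\Thf$ has zeros on $\Isp$'' (Lemma \ref{ztf:lemma_1}). Your closing ``cleaner variant'' is in fact exactly the paper's argument; the bulk of your proposal instead attempts a self-contained proof of the hard adapted-basis statement, and that attempt has a genuine gap at its decisive step.

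The gap is the claimed contradiction from counting zeros. Granting $D+\sigma D=(\omega)$ with $D\geq 0$ of degree $g-1$, write $D=D^++D^-+D^{\mathrm{ov}}$ as you propose. Then the zeros of $\omega$ on $\overline{\Gamma^+}$ are $D^++\sigma D^-+2D^{\mathrm{ov}}$, of total degree $\deg D^++\deg D^-+2\deg D^{\mathrm{ov}}$, and summing over $\Gamma^+$, $\Gamma^-$ and the ovals gives $2\deg D=2g-2$ identically: the degree bookkeeping is consistent for \emph{every} such $D$, so no contradiction can come from it, and ``Riemann--Hurwitz applied to the double of $\Gamma^+$'' adds nothing because the double of $\Gamma^+$ is $\Gamma$ itself. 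The actual contradiction in the literature is a sign argument: $\sigma$-invariance of $(\omega)$ forces the zeros of the real $1$-form $\omega|_{c_j}$ on each oval $c_j$ to have even order, hence $\oint_{c_j}\omega\neq 0$ for every $j$, while Stokes on $\Gamma^+$ gives $\sum_{j=1}^k\oint_{c_j}\omega=0$; one must then show these $k$ nonzero real periods all have the \emph{same} sign, and that is precisely where one uses that $\zbf_0$ lies in the imaginary component through the origin rather than in one of the other components $\Isp_{\qbf}(\permat)$. Nothing in your argument distinguishes $\Isp$ from $\Isp_{\qbf}(\permat)$, so it cannot close for $k\geq 2$ (for $k=1$ the single relation $\oint_{c_1}\omega=0$ already contradicts $\oint_{c_1}\omega\neq 0$, so your sketch happens to suffice there). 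A secondary issue: the half-period corrections in the conjugation laws for $\abelmap$ and for the Riemann constants do not all vanish merely by taking $P_0$ real; they must be tracked explicitly (as in Lemma \ref{ztf:real_case:lemma_1}) to obtain $D+\sigma D\sim K_\Gamma$ on the nose rather than up to a $2$-torsion point.
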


\begin{proof}
 Recall that, according to (\ref{tttt}), all real period matrices of $(\Gamma,\sigma)$ belong to the real Siegel upper-half space $\realsiegelnow{g}{\lambda}{1}$, where $\lambda$ equals $g+1-k$.
 Let $\Bcal$ be the real, canonical basis in $\homgroup$ with respect to which the (real) period matrix $\permat$ has been computed. 
 If its cycles $a_{\lambda+1},a_{\lambda+2},\ldots, a_n$ are homologically equivalent to ovals of $(\Gamma, \sigma)$, the theorem has already been proved 
 in \cite{dubrovin1988real},\cite{Fay}. Let $\tilde{\permat}$ be any other real period matrix of $(\Gamma,\sigma)$. 
 Then there exists  $G\in \realgroupnow{g}{\lambda}{1}$ such that
 \begin{align}
  \tilde{\permat} = \mfrak(G,\permat).
 \end{align}
 For lemma \ref{ztf:lemma_1}, then, 
 \begin{align}
  \Thefunc{\zbf}{\tilde{\permat}} \neq 0, \quad\quad\quad \forall \zbf \in \Isp.
 \end{align}
 The proof is complete.
\end{proof}
Next, we consider real Riemann surfaces without real points. We will make use of the following, fundamental result.
\begin{theorem}[Riemann, see \cite{Fay}] \label{ztf:real_case:thm_1}
 Let $\Gamma$ be a (compact, complex) Riemann surface and $\mathcal{B}$ a basis of cycles in $\homgroup$.
 There exists a divisor $\riemdiv$ of degree $g-1$ on $\Gamma$ such that for any $P_0\in \Gamma$ and $\ebf\in \Cbb^g$ one has
 \begin{enumerate}
  \item If $\Thefunc{\ebf}{\permat}\neq 0$, there exist exactly $g$ points on $\Gamma$ satisfying the condition 
  \begin{align}
   \Thefunc{\abelmap(P-P_0)-\ebf}{\permat}=0.
  \end{align}
  Denote with $D=P_1+P_2+\ldots+P_g$ the positive divisor of such points. One has 
  \begin{align}
   \ebf \equiv \abelmap (D-P_0-\riemdiv) \quad\quad \quad \mbox{in } \quad \Jac (\Gamma).
  \end{align}
  \item If $\Thefunc{\ebf}{\permat}=0$, then, for some positive divisor $D$ on $\Gamma$ of degree $g-1$, one has 
  \begin{align}
   \ebf \equiv \abelmap (D-\riemdiv) \quad\quad \quad \mbox{in} \quad \Jac(\Gamma).
  \end{align}
 \end{enumerate}
\end{theorem}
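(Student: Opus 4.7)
The plan is to follow Riemann's classical argument: pull the theta function back to the Riemann surface via the Abel map and analyze its zeros by contour integration on a fundamental polygon. I assume everything stated earlier in the excerpt, especially the quasi-periodicity relation (\ref{pertheta}) and the properties of the basis $\Bcal$ and normalized differentials $\omega_j$.

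First, dissect $\Gamma$ along the cycles $\Bcal = \{a_1,\dots,a_g;b_1,\dots,b_g\}$ to obtain a simply connected fundamental polygon $\hat{\Gamma}$ whose boundary consists of $4g$ arcs, identified pairwise by the cycles. Fix $\ebf \in \Cbb^g$ and define
\[
F(P) \;=\; \Thefunc{\abelmap(P-P_0)-\ebf}{\permat}, \qquad P\in \hat{\Gamma}.
\]
By (\ref{pertheta}) together with the period relations $\oint_{a_k}\omega_j=\delta_{jk}$ and $\oint_{b_k}\omega_j=\permatent_{jk}$, the multivalued function $F$ transforms on $\hat\Gamma$ by nonvanishing factors across the identified sides, so its \emph{zero set} descends to a well-defined divisor on $\Gamma$.

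Next, assume $F\not\equiv 0$ on $\Gamma$, which is implied by $\Thefunc{\ebf}{\permat}\neq 0$ once one checks that vanishing of $F$ at every point would force $\Theta$ to vanish identically along a translate of the Abel image. Count the zeros by the argument principle,
\[
N \;=\; \frac{1}{2\pi i}\oint_{\partial\hat\Gamma} d\log F.
\]
The paired $a_j$-arcs contribute opposite integrals since $F$ is invariant there; the paired $b_j$-arcs contribute because $\log F$ shifts by an explicit affine function of $\abelmap(P-P_0)$ coming from (\ref{pertheta}). A direct calculation using $\oint_{a_k}\omega_j=\delta_{jk}$ collapses these contributions to $N = g$. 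Call the zeros $P_1,\dots,P_g$ and set $D=P_1+\cdots+P_g$.

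For the identification of $\ebf$, compute the vector-valued integral
\[
\frac{1}{2\pi i}\oint_{\partial\hat\Gamma} \abelmap(P-P_0)\,d\log F,
\]
which by residues equals $\sum_{j=1}^g \abelmap(P_j-P_0)$. Evaluating it on the paired sides via the same transformation law yields, after telescoping, $\ebf+\boldsymbol{K}_{P_0}$ modulo $\La{\permat}$, where $\boldsymbol{K}_{P_0}\in\Cbb^g$ depends only on the base point and on $\Bcal$. Defining $\riemdiv$ as an effective divisor of degree $g-1$ with $\abelmap(\riemdiv+P_0)\equiv -\boldsymbol{K}_{P_0}$ (and checking $P_0$-independence up to linear equivalence) gives part (1). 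For part (2), if $\Thefunc{\ebf}{\permat}=0$, approximate by $\ebf_n\to\ebf$ with $\Thefunc{\ebf_n}{\permat}\neq 0$, apply part (1) to get $\ebf_n\equiv\abelmap(D_n-P_0-\riemdiv)$, and pass to the limit using compactness of the symmetric product $\Gamma^{(g)}$; since $\Theta$ is even, exchanging the roles via $\ebf\mapsto-\ebf$ and Abel's theorem converts this into an effective representation of degree $g-1$.

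\textbf{Main obstacle.} The hard part is Step three: the contour integral of $\abelmap(P-P_0)\,d\log F$ must be evaluated with scrupulous care, using the Riemann bilinear relations to handle the $b$-cycle contributions where $\log F$ is multivalued. This is where the Riemann constant $\riemdiv$ emerges \emph{intrinsically} rather than being postulated, and tracking its $P_0$-dependence (linear in $P_0$ modulo $\La{\permat}$) is the only delicate bookkeeping in the argument.
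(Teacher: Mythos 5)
The paper does not actually prove this theorem: it is quoted as a classical result of Riemann with a citation to Fay, so there is no internal argument to compare against. Your sketch is precisely the standard proof from that source (argument principle on the fundamental polygon to get $N=g$, then the vector-valued contour integral of $\abelmap(P-P_0)\,d\log F$ to produce the Riemann constants), and it is sound in outline. Two points need correction. First, $\riemdiv$ need not be effective --- the paper itself remarks right after the statement that the Riemann divisor ``is in general not positive'' --- and your normalization $\abelmap(\riemdiv+P_0)\equiv-\boldsymbol{K}_{P_0}$ is dimensionally inconsistent with the paper's Abel map (\ref{def_Abel_map}), which is defined only on degree-zero divisors; the correct normalization is $\abelmap(\riemdiv-(g-1)P_0)\equiv\boldsymbol{K}_{P_0}$. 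Second, in part (2) your limiting argument only produces $\ebf\equiv\abelmap(D-P_0-\riemdiv)$ with $D$ effective of degree $g$; to descend to degree $g-1$ you must use that $F(P_0)=\Thefunc{-\ebf}{\permat}=\Thefunc{\ebf}{\permat}=0$ by evenness of $\Theta$, so that $P_0$ itself lies in the zero divisor whenever $F\not\equiv 0$, and you must treat separately the degenerate case $F\equiv 0$ (e.g.\ by varying $P_0$ and showing that identical vanishing for every base point would force $\Theta\equiv 0$). The phrase ``exchanging the roles via $\ebf\mapsto-\ebf$ and Abel's theorem'' does not by itself accomplish this reduction.
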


In the statement above, the symbol $\mathcal{A}$ denotes the Abel map defined in (\ref{def_Abel_map_b}). We will refer to the divisor $\riemdiv$ as \emph{Riemann divisor}. This last one is in general not positive and depends on the fixed basis of cycles $\mathcal{B}$ in 
$\homgroup$. Recall that the anti-holomorphic involution $\sigma$ on $\Gamma$ easily extends to divisors by linearity: 
\begin{align}
 \sigma \lp \sum_{n=1}^N k_n P_n \rp = \sum_{n=1}^N k_n (\sigma P_n), \quad\quad\quad \sum_{n=1}^N k_n P_n \in \mathrm{Div} (\Gamma).
\end{align}
Let us show that the Riemann divisor behaves well w.r.t. this extension, if $(\Gamma, \sigma)$ is a real Riemann surface without real ovals. 
The following lemma mimics proposition 6.1 of \cite{Fay}.

\begin{lemma} \label{ztf:real_case:lemma_1}
 Let $(\Gamma, \sigma)$ be a real Riemann surface of topological type $(g,0,0)$, with $g\geq 2 $. Fix a real canonical basis of cycles $\Bcal$ in $\homgroup$. Then for the Riemann divisor one has 
 \begin{align}
  \riemdiv - \sigma \riemdiv \equiv 0 \quad\quad \mbox{in } \Pic (\Gamma)
 \end{align}
\end{lemma}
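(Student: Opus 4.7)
The plan is to interpret $\xi := \mathcal{A}(\sigma\riemdiv - \riemdiv) \in \Jac(\Gamma)$ as a translation-symmetry of the theta divisor $(\Thf)$, and then to force $\xi = 0$ by the rigidity of the principal polarization. First, I would transfer $\sigma$ to the Jacobian via the Abel map. Since $\Bcal$ is real, the normalized differentials satisfy $\sigma^{*}\omega_{k} = \overline{\omega_{k}}$: both sides have $a_j$-period $\delta_{jk}$ by $\sigma_{\star} a_j = a_j$ and $b_j$-period $M_{jk} - \permat_{jk}$, using $\sigma_{\star}b_j = -b_j + \sum_k M_{jk}a_k$ on the left and $\overline{\permat} = M - \permat$ on the right, where $M = \matmnow{g}{2g_0}{1}$ is forced by the topological type $(g,0,0)$. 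A path-lifting argument ($\int_{\sigma P_0}^{\sigma P}\boldsymbol{\omega} = \overline{\int_{P_0}^{P}\boldsymbol{\omega}}$) then yields
\[
\mathcal{A}(\sigma D) \equiv \overline{\mathcal{A}(D)} \quad \text{in } \Cbb^g / \Lambda(\permat)
\]
for every degree-zero divisor $D$ on $\Gamma$, the conjugation being well defined since $\overline{\Lambda(\permat)} = \Lambda(\permat)$.

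The key step is to extract genuine translation invariance of $(\Thf)$. Since the real type $(g,2g_0,1)$ is orthosymmetric, Lemma \ref{rvt:lemma_1} gives $\overline{\Thefunc{\zbf}{\permat}} = \Thefunc{\overline{\zbf}}{\permat}$, so $(\Thf)$ is stable under conjugation on $\Cbb^g/\Lambda(\permat)$. Writing $\ebf \in (\Thf)$ as $\ebf = \mathcal{A}(D - \riemdiv)$ via Riemann's Theorem \ref{ztf:real_case:thm_1}, the previous display gives
\[
\overline{\ebf} = \mathcal{A}(\sigma D - \sigma\riemdiv) = \mathcal{A}(\sigma D - \riemdiv) - \xi,
\]
so $\overline{\ebf} + \xi \in (\Thf)$. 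Composing with conjugation invariance of $(\Thf)$ produces the one-sided implication $\ebf \in (\Thf) \Rightarrow \ebf + \xi \in (\Thf)$. Now the canonical class is real: $\overline{\sigma^{*}\omega}$ is a holomorphic differential with divisor $\sigma K_\Gamma$, so $\sigma K_\Gamma \equiv K_\Gamma$, and combined with $2\riemdiv \equiv K_\Gamma$ this gives $2\xi = \mathcal{A}(\sigma K_\Gamma - K_\Gamma) = 0$. Applying the implication once more, now to $\ebf + \xi$, reverses it and upgrades it to honest translation invariance of $(\Thf)$ by $\xi$. This invariance forces $T_\xi^{*}\mathcal{O}((\Thf)) \cong \mathcal{O}((\Thf))$, placing $\xi$ in the kernel of the polarization morphism $\phi_L : \Jac(\Gamma) \to \widehat{\Jac(\Gamma)}$; principality makes $\phi_L$ an isomorphism, so $\xi = 0$ and $\sigma\riemdiv \equiv \riemdiv$ in $\Pic(\Gamma)$.

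The main delicate point sits in the second step: the direct combination of ``$(\Thf)$ is real'' with ``$\sigma$-equivariance of the Abel map'' yields only the \emph{twisted} symmetry ``conjugation followed by translation by $\xi$'' of $(\Thf)$. It is precisely the 2-torsion property of $\xi$, resting in turn on the reality of the canonical class, that lets these two twisted symmetries compose into an honest translation invariance of the theta divisor. I note that the hypothesis $k = 0$ does not enter visibly in this argument; the same reasoning appears to give $\sigma\riemdiv \equiv \riemdiv$ for any orthosymmetric real Riemann surface equipped with a real basis of cycles, which is consistent with the stated lemma but strictly stronger.
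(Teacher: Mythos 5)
Your proof is correct, but it takes a genuinely different and heavier route than the paper's. The paper (mimicking Fay's Proposition~6.1) works at a \emph{generic} point $\ebf$ with $\Thefunc{\ebf}{\permat}\neq 0$ and uses part~(1) of Theorem~\ref{ztf:real_case:thm_1}: there the divisor $D$ is uniquely pinned down as the zero set of $P\mapsto\Thefunc{\abelmap(P-P_0)-\ebf}{\permat}$, and the reality of $\Theta$ identifies the divisor attached to $(\overline{\ebf},\sigma P_0)$ as exactly $\sigma D$. Subtracting the two inversion formulas $\ebf\equiv\abelmap(D-P_0-\riemdiv)$ and $\overline{\ebf}\equiv\abelmap(\sigma D-\sigma P_0-\riemdiv)$, after conjugating the first via the $\sigma$-equivariance of the Abel map, cancels $D$, $P_0$ and their images and leaves $\abelmap(\riemdiv-\sigma\riemdiv)=0$ in one line --- no torsion argument and no rigidity of the polarization are needed. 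You instead work on the theta divisor itself via part~(2), where $D$ is not unique, so you can only extract the twisted symmetry $\ebf\in(\Thf)\Rightarrow\overline{\ebf}+\xi\in(\Thf)$; untwisting it costs you three standard inputs absent from the paper: the converse of part~(2) (the Riemann vanishing theorem, needed to place $\abelmap(\sigma D-\riemdiv)$ back on $(\Thf)$), the identity $2\riemdiv\equiv K_\Gamma$ together with reality of the canonical class (to get $2\xi=0$), and the triviality of the kernel of the polarization map for a principal polarization (plus reducedness of the Jacobian theta divisor, so that set-theoretic invariance upgrades to invariance of line bundles). All of these are classical and your deductions from them are sound, so the argument stands; it is simply less economical. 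Your closing observation that the hypothesis $k=0$ is never used is also correct and consistent with the paper, whose proof likewise works for any real period matrix of orthosymmetric type.
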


Recall that $J_0(\Gamma)$ was defined in (\ref{def_Pic}) as the quotient of the group of divisors of degree zero w.r.t. the subgroup of the principal ones.

\begin{proof}
 Denote with $\permat$ the (real) period matrix corresponding to $\Bcal$. In view of (\ref{tttt}), this last one belongs to $\realsiegelnow{g}{g}{1}$ or $\realsiegelnow{g}{g-1}{1}$, depending on $g$ being even or odd respectively. So, from lemma \ref{rvt:lemma_1}, one has
 \begin{align} \label{ztf:real_case:lemma_1:form_1}
  \Thefunc{\overline{\zbf}}{\permat} 
  = \overline{\Thefunc{\zbf}{\permat}} \quad\quad\quad \forall \zbf\in \Cbb^g.
 \end{align}
 Fix $P_0\in \Gamma$ and $\ebf\in \Cbb^g$ such that $\Thefunc{\ebf}{\permat}\neq 0$. Let
 \begin{align}\label{ztf:real_case:lemma_1:form_2}
  D = P_1+P_2+\ldots +P_g
 \end{align}
 be the positive divisor of the g points on $\Gamma$ satisfying the condition
 \begin{align} \label{ztf:real_case:lemma_1:form_3}
  \Thefunc{\abelmap(P-P_0)-\ebf}{\permat}=0.
 \end{align}
 In view of symmetry (\ref{ztf:real_case:lemma_1:form_1}), the points of the divisor 
 \begin{align} \label{ztf:real_case:lemma_1:form_4}
  \sigma D = \sigma P_1+\sigma P_2 + \ldots +\sigma P_g
 \end{align}
 are exactly the ones satisfying
 \begin{align} \label{ztf:real_case:lemma_1:form_5}
  \Thefunc{\abelmap(P-\sigma P_0) - \overline{\ebf}}{\permat} = 0.
 \end{align}
 According to theorem \ref{ztf:real_case:thm_1}, conditions (\ref{ztf:real_case:lemma_1:form_3}) and (\ref{ztf:real_case:lemma_1:form_5}) imply 
 \begin{align}
  \ebf \equiv \abelmap (D-P_0-\riemdiv) 
  \quad\quad \mbox{and} \quad\quad
  \overline{\ebf}\equiv \abelmap(\sigma D - \sigma P_0 - \riemdiv)
  \quad\quad\quad \mbox{in} \,\,\, \Jac (\Gamma). 
 \end{align}
 The thesis follows from an elementary manipulation of these last ones.
\end{proof}
Let $(\Gamma,\sigma)$ be a real Riemann surface of topological type $(g,0,0)$, with $g\geq 2$. Fix a real, canonical basis of cycles $\Bcal$ in $\homgroup$. Indicate with $\permat$ the corresponding real period matrix of $\Gamma$. The results above allow a convenient characterization of\footnote{This set is of course well-defined, in view of (\ref{pertheta}).}
\begin{align}
 \thetadivreal = \lb [\zbf]_{\Lambda (\permat)} \in \mathrm{Jac}(\Gamma) \mbox{ such that }
 S_{conj} ([\zbf]_{\Lambda (\permat)}) = [\zbf]_{\Lambda(\permat)} \mbox{ and } \Thefunc{\zbf}{\permat}=0
 \rb
\end{align}
Denote with $\Gamma^{g-1}$ the set of positive divisors on $\Gamma$ of degree $g-1$. 
Introduce
\begin{align}
 \Zcal (\Gamma) := \lb D \in \Gamma^{g-1} \mid D-\sigma D \equiv 0 \,\,\mbox{in} \,\,  \Pic(\Gamma) \rb.  
\end{align}
As a consequence\footnote{See \cite{farkas1992riemann} and \cite{siegel_vol1} for more detail.} of theorem \ref{ztf:real_case:thm_1}, one has 
\begin{align}
 \thetadivreal = \lb \abelmap(D-\riemdiv) \mid D \in \Zcal (\Gamma) \rb
\end{align}
Now notice that this last one can be decomposed as 
\begin{align}
 \Zcal(\Gamma) = \Zcal_{triv}(\Gamma) \cup \Zcal_{nontriv}(\Gamma)
\end{align}
where
\begin{align}
 \Zcal_{triv} = \lb D \in \Zcal (\Gamma) \mid D = \sigma D \,\,\mbox{as plane divisors} \rb \\
 \Zcal_{nontiv} = \lb D \in \Zcal (\Gamma) \mid D \neq \sigma D \,\, \mbox{as plane divisors} \rb
\end{align}
We will consider the points of $\thetadivreal$ yielded by each of these two sets.
\begin{proposition} \label{ztf:real_case:prop_1}
 The set
 \begin{align} \label{ztf:real_case:set_2}
  \lb \abelmap (D-\riemdiv) \mid D \in \Zcal_{nontriv}(\Gamma)\rb
 \end{align}
 is contained in a real analytic subvariety of the locus of 
 real points of $(\Jac (\Gamma), S_{conj})$ whose real codimension is at least three.
\end{proposition}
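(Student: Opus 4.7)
The starting point is the observation that $D \in \Zcal_{nontriv}(\Gamma)$ is equivalent to $\dim|D| \geq 1$: indeed, $D$ and $\sigma D$ are by assumption two distinct divisors in the complete linear system $|D|$. Moreover the entire system $|D|$ is contained in $\Zcal(\Gamma)$ --- for any $D' \in |D|$, one has $\sigma D' \in |\sigma D| = |D|$ (using $D\equiv \sigma D$), hence $\sigma D' \equiv D'$. Since the map $D \mapsto \abelmap(D - \riemdiv)$ is constant on $|D|$, the restriction
\begin{align*}
 \Zcal_{nontriv}(\Gamma) \longrightarrow \Jac(\Gamma), \qquad D \longmapsto \abelmap(D - \riemdiv),
\end{align*}
has fibers of complex dimension at least one, hence of real dimension at least two.

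I would next bound $\dim_{\Rbb}\Zcal(\Gamma)$ from above. Consider the real-analytic map
\begin{align*}
 \mu : \Gamma^{g-1} \longrightarrow \Icmp(\permat), \qquad D \longmapsto \abelmap(D) - S_{conj}(\abelmap(D)),
\end{align*}
whose target is a real $g$-dimensional torus; by construction $\Zcal(\Gamma) = \mu^{-1}(\mathbf{0})$. At a generic effective divisor $D = P_1 + \cdots + P_{g-1}$ with pairwise non-$\sigma$-conjugate points, the image of $d\abelmap$ is the complex $(g-1)$-dimensional subspace $W = \mathrm{span}_{\Cbb}\{\omega(P_1), \ldots, \omega(P_{g-1})\} \subset \Cbb^g$, and the image of $d\mu$ equals $2i\cdot\mathrm{Im}(W) \subset \Isp$. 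A real-linear dimension count --- using the absence of real points of $(\Gamma, \sigma)$ to force $W \cap \Rsp$ to have the expected real dimension $g-2$ --- shows $\mathrm{Im}(W) = \Rsp$, so $\mu$ is a submersion in a neighborhood of such a $D$. Consequently
\begin{align*}
 \dim_{\Rbb} \Zcal(\Gamma) \;\leq\; 2(g-1) - g \;=\; g - 2.
\end{align*}

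Combining the two steps, the image of $\Zcal_{nontriv}(\Gamma)$ under $D \mapsto \abelmap(D - \riemdiv)$ has real dimension at most $(g-2) - 2 = g-4$, so it is contained in a real analytic subvariety of $\Rcmp(\permat)$ of real codimension at least $4$, \emph{a fortiori} at least $3$, which is the asserted bound.

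The principal obstacle is the submersion claim for $\mu$, which amounts to showing $W + \overline{W} = \Cbb^g$ for generic $D$. This is precisely where the hypothesis of topological type $(g,0,0)$ enters substantively: a real point $P \in \Gamma$ would place $\omega(P)$ inside $\Rsp$, enlarging $W \cap \Rsp$ and potentially breaking the surjectivity of $d\mu$. The verification reduces to a general-position statement for the $2g-2$ period vectors $\omega(P_i), \omega(\sigma P_i)$ inside $\Cbb^g$, together with the linear independence of the normalized holomorphic differentials at distinct points of $\Gamma$.
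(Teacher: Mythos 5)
Your first step agrees with the paper's key observation: from $D\equiv\sigma D$ with $D\neq\sigma D$, both effective, one gets $l(D)>1$, so $\Zcal_{nontriv}(\Gamma)$ consists of special divisors (note it is an implication, not an equivalence as you wrote, but only the implication is used). The second step, however, has a genuine gap. You bound $\dim_{\Rbb}\Zcal(\Gamma)=\dim_{\Rbb}\mu^{-1}(\mathbf{0})$ by $2(g-1)-g$ on the grounds that $\mu$ is a submersion near a \emph{generic} $D\in\Gamma^{g-1}$. Generic submersivity does not bound the dimension of a particular fibre: $\mu^{-1}(\mathbf{0})$ may lie entirely in the non-generic locus, where no rank estimate holds (compare $\mu(x,y,z)=(xz,yz)$ on $\Rbb^3$, a submersion for $z\neq 0$, whose zero fibre contains the whole plane $z=0$). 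And this is not a hypothetical worry here: the divisors of $\Zcal_{nontriv}(\Gamma)$ are exactly the special ones, and by geometric Riemann--Roch the rank of $d\abelmap$ at a reduced degree-$(g-1)$ divisor with $\dim|D|=r\geq 1$ is $g-1-r<g-1$. So the hypothesis that the image $W$ of $d\abelmap$ is $(g-1)$-dimensional fails at every point of the set you need to control, and the estimate $\dim_{\Rbb}\Zcal(\Gamma)\leq g-2$ is not established. (The claim that absence of real points forces $\dim_{\Rbb}(W\cap\Rsp)=g-2$ is also only asserted, but the rank drop is the fatal issue.) A further warning sign: your conclusion of codimension at least $4$ overshoots what is true in general; for a hyperelliptic curve of type $(g,0,0)$ the relevant image is a translate of $W^1_{g-1}$, of complex dimension exactly $g-3$, whose real locus can attain real dimension $g-3$, i.e.\ codimension exactly $3$.

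The paper avoids the dimension count entirely: having placed $\Zcal_{nontriv}(\Gamma)$ inside $\mathcal{G}(\Gamma)=\lb D\in\Gamma^{g-1}\mid l(D)>1\rb$, it notes that the image $\mathcal{W}(\Gamma)=\lb\abelmap(D-\riemdiv)\mid D\in\mathcal{G}(\Gamma)\rb$ is a translate of $W^1_{g-1}$ and invokes Martens' theorem to get complex dimension at most $g-3$; since $\mathcal{W}(\Gamma)$ is invariant under $S_{\Gamma}=S_{conj}$, its real points form a real analytic set of real dimension at most $g-3$ inside the $g$-dimensional real locus, whence codimension at least $3$. To salvage your differential argument you would need to control the rank of $d\mu$ along $\Zcal(\Gamma)$ itself, i.e.\ at special divisors --- which is essentially the content that Martens-type theorems supply.
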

\begin{proof}
 The conditions defining $\Zcal_{triv} (\Gamma)$ imply that for each divisor $D$ of its there exists a nonconstant element in $L(D)$, the space of meromorphic functions on $\Gamma$ with poles bounded by $-D$. That is
 \begin{align}
  l (D)>1.
 \end{align}
 Introduce 
 \begin{align}
  \mathcal{G}(\Gamma) := \lb D\in \Gamma^{g-1} \mid l(D)>1 \rb.
 \end{align}
 Denote with $\mathcal{W}(\Gamma)$ the set of zeros of $\Thf$ it generates:
 \begin{align}
  \mathcal{W}(\Gamma) := \lb \abelmap (D-\riemdiv) \mid D \in \mathcal{G}(\Gamma) \rb.
 \end{align}
 Now notice that
 \begin{align}
  \sigma (\mathcal{G}(\Gamma)) \subset \mathcal{G}(\Gamma).
 \end{align}
 Lemma \ref{ztf:real_case:lemma_1} and the definition of the symmetry $S_{\Gamma}$  on $\Jac (\Gamma)$ (see section \ref{intro}) imply then
 \begin{align}
  S_{\Gamma} (\mathcal{W}) \subset \mathcal{W}.
 \end{align}
 That is, $\mathcal{W}$ inherits an induced anti-holomorphic involution from the real Jacobian of $(\Gamma, \sigma)$. On the other hand, $\mathcal{W}$ is an 
 analytic subvariety of $\Jac (\Gamma)$ of dimension at most $g-3$. Indeed, it is a result of Martens \cite{Martens1967}, that a translate of its is. The thesis 
 follows then by observing that the elements of the set (\ref{ztf:real_case:set_2}) are $S_{\Gamma}$-real points of the subvariety $\mathcal{W}$ and by recalling that 
 $S_{\Gamma}$ coincides with $S_{conj}$, because the basis of cycles $\Bcal$ in $\homgroup$ is real.  
\end{proof}
\begin{proposition} \label{ztf:real_case:prop_2}
 The set $\Zcal_{triv}(\Gamma)$ is nonempty if and only if $g$ is odd. In that case, its image 
 \begin{align} \label{ztf:real_case:prop_2:quad_01}
  \lb \abelmap (D-\riemdiv) \mid  D \in \Zcal_{triv}(\Gamma) \rb
 \end{align}
 is entirely contained in $\Rcmp_1 (\permat)$
\end{proposition}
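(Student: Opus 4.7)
The argument proceeds in three stages.

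\textbf{First}, the dichotomy on $g$. Since $(\Gamma,\sigma)$ has no real points, $\sigma$ acts freely on $\Gamma$, so any positive divisor $D$ with $D = \sigma D$ as plane divisors decomposes into $\sigma$-orbits $\{P,\sigma P\}$ and therefore has even degree. Elements of $\Zcal_{triv}(\Gamma)$ have degree $g-1$, so $\Zcal_{triv}(\Gamma) \neq \emptyset$ forces $g-1$ even, i.e., $g$ odd. Conversely, if $g = 2g_0+1$, every divisor of the form $\sum_{i=1}^{g_0}(P_i+\sigma P_i)$ lies in $\Zcal_{triv}(\Gamma)$.

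\textbf{Second}, connectedness of the image. Assume $g=2g_0+1$. Let $\Phi(D):=\abelmap(D-\riemdiv)$. Using the real basis, one has $\sigma^*\omega_k=\overline{\omega_k}$, and hence, for any fixed base point $P_*\in\Gamma$,
\begin{align*}
 \abelmap(P+\sigma P - 2P_*)=2\mathrm{Re}\!\int_{P_*}^{P}\boldsymbol{\omega} + \abelmap(\sigma P_*-P_*).
\end{align*}
This continuous formula parametrizes $\Phi(\Zcal_{triv}(\Gamma))$ by $\mathrm{Sym}^{g_0}(\Gamma/\sigma)$, a connected space. Together with the argument preceding Proposition \ref{ztf:real_case:prop_1}, which uses lemma \ref{ztf:real_case:lemma_1} to show that $\Phi(D)$ is $S_{conj}$-fixed whenever $D=\sigma D$, this places $\Phi(\Zcal_{triv}(\Gamma))$ in a single connected component of the real locus, either $\Rcmp(\permat)$ or $\Rcmp_1(\permat)$.

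\textbf{Third}, identification of the component. Since $\permat=\tfrac{1}{2}M+iT$ with $M=\matmnow{g}{2g_0}{1}$ whose last row and column vanish, a real class $[\zbf]$ lies in $\Rcmp(\permat)$ iff $\mathrm{Im}(\zbf)\in T\Zbb^g$ and in $\Rcmp_1(\permat)$ iff $\mathrm{Im}(\zbf)\in\tfrac{1}{2}T\ebf_g+T\Zbb^g$. Setting $\cbf := \abelmap(\sigma P_*-P_*)$ and $\ebf := \abelmap(\riemdiv - (g-1)P_*)$, the computation above gives
\begin{align*}
 \mathrm{Im}\,\Phi(D) \;=\; g_0\mathrm{Im}(\cbf) - \mathrm{Im}(\ebf).
\end{align*}
Lemma \ref{ztf:real_case:lemma_1} translates into $(g-1)\cbf+\overline{\ebf}-\ebf\in\Lambda(\permat)$. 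Splitting into real and imaginary parts yields integer vectors $\mbf,\nbf\in\Zbb^g$ with $(g-1)\mathrm{Im}(\cbf)-2\mathrm{Im}(\ebf)=T\nbf$ and $(g-1)\mathrm{Re}(\cbf)-\tfrac{1}{2}M\nbf=\mbf$, whence $\mathrm{Im}\,\Phi(D)=\tfrac{1}{2}T\nbf$.

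The conclusion thus reduces to showing $\nbf\equiv \ebf_g$ modulo $2\Zbb^g$. The vanishing mod $2$ of the first $\lambda=2g_0$ entries of $\nbf$ follows from the real-part equation combined with $\mathrm{Re}(\cbf)\in\tfrac{1}{2}\Zbb^g$ (a consequence of $\cbf+\overline{\cbf}\in\Lambda(\permat)\cap\Rbb^g=\Zbb^g$) and the block structure of $M$. The main obstacle is verifying that the last entry of $\nbf$ is odd: I expect this to follow from a direct geometric argument exploiting that $b_g$ is forced to be $\sigma_*$-anti-invariant (the last row of $M$ vanishing), so that the period of $\boldsymbol\omega$ along $b_g$ is purely imaginary and the path from $P_*$ to $\sigma P_*$ contributes precisely a half-period in the $b_g$-direction, producing $n_g\equiv 1\pmod 2$. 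This pins $\Phi(D)$ into $\Rcmp_1(\permat)$ and concludes the proof.
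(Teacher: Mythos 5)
Your first stage (the parity argument showing $\Zcal_{triv}(\Gamma)\neq\emptyset$ iff $g$ is odd) is exactly the paper's argument and is fine, and your connectedness observation via $\mathrm{Sym}^{g_0}(\Gamma/\sigma)$ correctly justifies what the paper only asserts, namely that the image lies entirely in one of the two components $\Rcmp(\permat)$, $\Rcmp_1(\permat)$. The gap is in your third stage. Identifying \emph{which} component is precisely the content of the proposition, and your reduction leaves it hanging on the claim that the last entry of the integer vector $\mathbf{n}$ is odd — a claim you explicitly do not prove ("I expect this to follow..."). The heuristic you offer does not close it: the parity of $n_g$ is determined by $g_0\mathrm{Im}(\mathbf{c})-\mathrm{Im}(\mathbf{e})$ modulo $T\Zbb^g$, and your only constraint (from Lemma \ref{ztf:real_case:lemma_1}) fixes $2\mathrm{Im}(\mathbf{e})$ in terms of $(g-1)\mathrm{Im}(\mathbf{c})$ modulo $T\Zbb^g$, hence fixes $\mathrm{Im}(\mathbf{e})$ only modulo $\tfrac{1}{2}T\Zbb^g$ — which is exactly the ambiguity between the two components. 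A statement about the arc from $P_*$ to $\sigma P_*$ alone cannot resolve this, because the answer also depends on the position of the Riemann divisor $\riemdiv$ (its imaginary Abel image modulo half-periods), a genuinely nontrivial datum tied to the theory of theta characteristics. So the argument is circular at the decisive point.

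The paper settles the component by an entirely different, indirect route that you should compare with: it argues by contradiction using a dimension count. If the image of $\Zcal_{triv}(\Gamma)$ lay in $\Rcmp(\permat)$, then every point of $\thetadivreal\cap\Rcmp_1(\permat)$ would come from $\Zcal_{nontriv}(\Gamma)$, and by Proposition \ref{ztf:real_case:prop_1} that set sits inside a real analytic subvariety of dimension at most $g-3$; but Theorem \ref{ztf:thm_1} guarantees that the theta function vanishes on a $(g-1)$-dimensional subset of $\Rsp_1(\permat)$, which is locally isomorphic to $\Rcmp_1(\permat)$. This contradiction forces the image into $\Rcmp_1(\permat)$ with no explicit computation of Abel images or Riemann constants. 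If you want to salvage your direct approach, you would need an independent computation of the half-period class of $\abelmap(\riemdiv)$ (or of the vector of Riemann constants) for a real curve of type $(g,0,0)$, which is substantially harder than the dimension-count detour.
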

\begin{proof}
 Let $D$ be a positive divisor on $\Gamma$ such that $\sigma D = D$ (as plane divisors). If $P$ is a point of $D$, then also $\sigma P$ is. $P$ and $\sigma P$ are 
 distinct, because the symmetry $\sigma$ has no fixed points. One has then 
 \begin{align}
  D = (P+\sigma P ) +\tilde{D}
 \end{align}
 for some (possibly vanishing), positive divisor $\tilde{D}$ such that $\sigma \tilde{D}=\tilde{D}$ and $\deg \tilde{D} = \tilde{D}$. By induction, one can then 
 easily prove that $\deg D $ is even. This forces the genus of $\Gamma$ to be odd.\\
 As $\Zcal_{triv}(\Gamma)$ is connected, the set (\ref{ztf:real_case:prop_2:quad_01})
 is also connected. Consequently, it entirely lies either in $\Rcmp_1(\permat)$ or in $\Rcmp(\permat)$. If this last one was the case, the points of $\thetadivreal$ 
 in $\Rcmp_1(\permat)$ would all correspond to divisors of $\Zcal_{nontriv}(\Gamma)$. In view of proposition \ref{ztf:real_case:prop_1}, 
 $\thetadivreal \cap \Rcmp_1(\permat)$ would be contained in a real analytic subvariety of dimension not greater than $g-3$. But this would contradict theorem 
 \ref{ztf:thm_1}, because $\Rsp_1(\permat)$ is locally isomorphic to $\Rcmp_1(\permat)$. In this way, also the second part of the thesis is proved.
\end{proof}
As an easy consequence of the last two propositions one has the following 
\begin{theorem} \label{ztf:real_case:thm_10}
 Let $(\Gamma,\sigma)$ be a real Riemann surface of topological type $(g,0,0)$, with $g\geq 2$. Denote with $\permat$ a real period matrix of its. The set
 \begin{align} \label{ztf:real_case:thm_10_form_1}
  \lb \zbf \in \Rsp \mid \Thefunc{\zbf}{\permat} \neq 0 \rb
 \end{align}
 is path-connected.
\end{theorem}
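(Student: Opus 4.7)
The strategy is to reduce the statement to the assertion that the zero set of $\Thf$ restricted to $\Rsp$ is contained in a real analytic subvariety of codimension at least two, since removing such a subvariety from the connected real analytic manifold $\Rsp=\Rbb^g$ leaves a path-connected set. To exploit Propositions \ref{ztf:real_case:prop_1} and \ref{ztf:real_case:prop_2}, which concern $\thetadivreal$ inside $\Jac(\Gamma)$, I would first pass to the quotient via the projection
\begin{align*}
 \pi: \Rsp \longrightarrow \Rcmp(\permat), \quad \xbf\longmapsto [\xbf]_{\Lambda(\permat)}.
\end{align*}
Since $\permat=\tfrac{1}{2}M+iT$ with $T$ positive definite, the only real periods are the integer ones, i.e.\ $\Rsp\cap \Lambda(\permat)=\Zbb^g$, so $\pi$ is a covering map and in particular a local real analytic diffeomorphism. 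Hence codimension of analytic subsets is preserved under preimage, and it suffices to bound the codimension of $\thetadivreal \cap \Rcmp(\permat)$ inside the real torus $\Rcmp(\permat)$.

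Next, I would decompose
\begin{align*}
 \thetadivreal = \lb \abelmap(D-\riemdiv) \mid D\in \Zcal_{triv}(\Gamma)\rb \cup \lb \abelmap(D-\riemdiv) \mid D\in \Zcal_{nontriv}(\Gamma)\rb
\end{align*}
and handle the two pieces separately. For the nontrivial part, Proposition \ref{ztf:real_case:prop_1} shows it sits in a real analytic subvariety of real codimension at least three (in the full real locus, hence also in $\Rcmp(\permat)$); in the extreme case $g=2$ this forces the contribution to be empty. For the trivial part, Proposition \ref{ztf:real_case:prop_2} gives that $\Zcal_{triv}(\Gamma)$ is empty when $g$ is even, while for $g$ odd its image lies entirely in the component $\Rcmp_1(\permat)$, which is disjoint from $\Rcmp(\permat)$ (these are distinct connected components of the real locus, since the real type of $\permat$ is $(g,g-1,1)$, so $g-\lambda=1$). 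In both parities the contribution from $\Zcal_{triv}$ to $\thetadivreal \cap \Rcmp(\permat)$ is therefore empty.

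Combining the two bounds, $\thetadivreal \cap \Rcmp(\permat)$ is contained in a real analytic subvariety of $\Rcmp(\permat)$ of real codimension at least three (and empty for $g=2$). Pulling back via $\pi$, the zero set of $\Thf$ on $\Rsp$ is contained in a real analytic subvariety of $\Rbb^g$ of the same codimension, hence of codimension at least two. A standard argument (any two points can be joined by a generic piecewise-linear path avoiding a real analytic subset of codimension $\geq 2$) then yields path-connectedness of the complement, which is the claim. The only subtle point -- and the step where the structural results of the section are actually used -- is the verification that the $\Zcal_{triv}$ contribution does not meet $\Rcmp(\permat)$; once Proposition \ref{ztf:real_case:prop_2} is in hand this is a direct identification of connected components of the real locus, and no further analytic input is required.
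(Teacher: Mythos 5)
Your proposal is correct and follows essentially the same route as the paper: both arguments use Proposition \ref{ztf:real_case:prop_2} to exclude the $\Zcal_{triv}$ contribution from $\Rcmp(\permat)$ (empty for $g$ even, confined to the other component $\Rcmp_1(\permat)$ for $g$ odd) and Proposition \ref{ztf:real_case:prop_1} to place the $\Zcal_{nontriv}$ contribution in a real analytic subvariety of codimension at least three, then lift to $\Rsp$ via the local isomorphism with $\Rcmp(\permat)$. Your write-up merely makes explicit the covering-map and generic-path details that the paper leaves implicit.
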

\begin{proof}
 In view of proposition \ref{ztf:real_case:prop_1}, all the points of $\thetadivreal \cap \Rcmp(\permat)$ correspond to divisors in $\Zcal_{nontriv}(\Gamma)$. 
 In view of proposition \ref{ztf:real_case:prop_2}, these points are contained in a real subvariety of (real) dimension not greater than $g-3$. As $\Rcmp(\permat)$ 
 is a manifold of real dimension $g$, the set $\Rcmp(\permat)\backslash \thetadivreal$ is path-connected. The thesis follows then by observing that the set 
 in (\ref{ztf:real_case:thm_10_form_1}) is locally isomorphic to this last one.
\end{proof}


 \section{Proof of the main theorem}
 
 \label{fin}
 
 We are now ready to prove Theorem \ref{theorem_2}. Notice that, according to 
 definitions (\ref{def_theta}) and (\ref{def_theta_const}), for any $g\geq 2$ one has 
 \begin{align}
  \label{fin:form_1}
  \Theconst{\permat}{\mathrm{0}}{\bsbeta} = \Thefunc{\nicefrac{\bsbeta}{2}}{\permat}
 \end{align}
 for all $\bsbeta \in \Zbb^g$ and $\permat \in \siegel$.
\begin{proposition} \label{fin:prop_1}
 Let $(\Gamma, \sigma)$ be a real Riemann surface of topological type $(2g_0,1,1)$ or $(2g_0+1,2,1)$, for some integer $g_0\geq 1$. Let $\permat$ be a real period matrix of its. One has 
 \begin{align}
  \Thefunc{\zbf}{\permat}>0, \quad\quad\quad \forall \zbf \in \Isp.
 \end{align}
\end{proposition}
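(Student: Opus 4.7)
The proposition is really a synthesis of three facts already established in the paper, so the plan is to combine them carefully.

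First, I would identify the setup. The topological types $(2g_0,1,1)$ and $(2g_0+1,2,1)$ both satisfy $\delta=1$, and according to formula (\ref{tttt}) both correspond to the real type $(g,\lambda,\varepsilon)=(g,2g_0,1)$. In particular this is a separated, orthosymmetric type with $\lambda=2g_0>0$, so any real period matrix $\permat$ of $(\Gamma,\sigma)$ lies in the orthosymmetric real Siegel upper-half space $\realsiegelnow{g}{2g_0}{1}$, and all hypotheses of the earlier statements are met.

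Next I would run the following three-step argument. Since the type is separated, Proposition \ref{ztf:prop_100} (the separated case of Theorem \ref{intro:th_zeri_RI}) implies
\begin{equation*}
 \Thefunc{\zbf}{\permat}\neq 0,\qquad \forall\zbf\in\Isp.
\end{equation*}
Since the type is orthosymmetric, Lemma \ref{rvt:lemma_1} guarantees that $\Thf$ is real-valued on $\Isp$. The subspace $\Isp=\{i\xbf\mid\xbf\in\Rbb^g\}$ is (trivially) path-connected, and $\Thf$ is continuous on it, so by the intermediate value theorem $\Thf$ has constant (nonzero) sign on $\Isp$. Finally, by Corollary \ref{rvt:coro_1} applied with $M=\matmnow{g}{2g_0}{1}$, there exists at least one point $\zbf_0\in\Isp$ such that $\Thefunc{\zbf_0}{\permat}>0$. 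Combining these observations forces $\Thefunc{\zbf}{\permat}>0$ for every $\zbf\in\Isp$, which is the claim.

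I do not anticipate any real obstacle here: the proposition is essentially a corollary, and the only things to double-check are bookkeeping items, namely that the topological types under consideration fall under the hypotheses of Proposition \ref{ztf:prop_100} (they do, because $\delta=1$), that the real type is orthosymmetric with $\lambda>0$ so that both Lemma \ref{rvt:lemma_1} and Corollary \ref{rvt:coro_1} apply, and that $\Isp$ is connected so that the sign cannot jump at any intermediate point. The mild subtlety is to notice that the positivity witness provided by Corollary \ref{rvt:coro_1} is given explicitly as $\zbf_0=(\permat-\nicefrac{1}{2}M)(-M\bsbeta)\in\Isp$ for an appropriate $\bsbeta$, so it does indeed live in $\Isp$ and not elsewhere.
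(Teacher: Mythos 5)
Your argument is correct and is essentially identical to the paper's own proof: both deduce nonvanishing on $\Isp$ from Proposition \ref{ztf:prop_100}, reality and continuity from Lemma \ref{rvt:lemma_1}, constancy of sign from the connectedness of $\Isp$, and positivity of that sign from Corollary \ref{rvt:coro_1}. The bookkeeping you flag (that the type is separated and orthosymmetric with $\lambda=2g_0>0$, and that the positivity witness lies in $\Isp$) is exactly what the paper relies on as well.
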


\begin{proof}
 According to (\ref{tttt}), $\permat$ belongs to $\realsiegelnow{2g_0}{2g_0}{1}$ or $\realsiegelnow{2g_0+1}{2g_0}{1}$ respectively. In view of 
 Proposition \ref{ztf:prop_100} the function $\Thefunc{\zbf}{\permat}$ is nonzero for every $\zbf$ in $\Isp$. Moreover, it is continuous and, due to 
 lemma \ref{rvt:lemma_1}, real on $\Isp$. So, it is either strictly positive or strictly negative on the whole $\Isp$. Corollary \ref{rvt:coro_1} 
 implies that the former option holds here.
\end{proof}
\begin{proposition}\label{fin:prop_2} Let $(\Gamma,\sigma)$ be a real Riemann surface of topological type $(g,0,0)$, for some integer $g\geq 2$. Let $\permat$ be a period matrix of its. One has 
 \begin{align}
  \Thefunc{\zbf}{\permat}\geq 0, \quad\quad\quad \forall \zbf \in \Rsp.
 \end{align}
\end{proposition}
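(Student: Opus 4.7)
The plan is to combine three ingredients already established in the preceding sections. First, by Theorem \ref{theorem_all_poss}, a real Riemann surface of topological type $(g,0,0)$ has real type $(g, 2g_0, 1)$, so $\permat$ belongs to $\realsiegelnow{g}{2g_0}{1}$, which is orthosymmetric. Note that $\lambda = 2g_0 \geq 2$ since $g_0 \geq 1$, so in particular $\lambda > 0$, which will be needed to invoke Corollary \ref{rvt:coro_1}.

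Next, I would apply Lemma \ref{rvt:lemma_1} to conclude that $\Thf$ is a real-valued continuous function on $\Rsp$. Combined with the second part of Theorem \ref{intro:th_zeri_RI} (stated as Theorem \ref{ztf:real_case:thm_10}), which guarantees that the set
\begin{align*}
 \lb \zbf\in \Rsp \mid \Thefunc{\zbf}{\permat}\neq 0 \rb
\end{align*}
is path-connected, a standard continuity argument forces $\Thf$ to have constant sign on this open subset of $\Rsp$; that is, $\Thf$ is either everywhere $\geq 0$ or everywhere $\leq 0$ on $\Rsp$.

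Finally, to fix the sign, I would appeal to Corollary \ref{rvt:coro_1}, whose hypotheses ($\lambda > 0$, orthosymmetric real type, $\permat \in \realsiegelnow{g}{\lambda}{1}$) are all satisfied. This provides a point $\xbf_0 \in \Rsp$ with $\Thefunc{\xbf_0}{\permat} > 0$, so the constant sign of $\Thf$ on the nonvanishing locus must be positive. The inequality $\Thefunc{\zbf}{\permat} \geq 0$ then holds for every $\zbf \in \Rsp$.

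There is no real obstacle here beyond invoking the right results in the right order; the proof is a direct assembly of Theorem \ref{theorem_all_poss}, Lemma \ref{rvt:lemma_1}, Theorem \ref{ztf:real_case:thm_10}, and Corollary \ref{rvt:coro_1}. The one point requiring a brief check is that $\lambda = 2g_0 > 0$ (needed for Corollary \ref{rvt:coro_1}), which follows automatically from $g_0 \geq 1$.
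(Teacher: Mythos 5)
Your proof is correct and follows essentially the same route as the paper: identify the real type via Theorem \ref{theorem_all_poss}, use reality and continuity of $\Thf$ on $\Rsp$ from Lemma \ref{rvt:lemma_1}, deduce constant sign from the path-connectedness in Theorem \ref{ztf:real_case:thm_10}, and fix the sign with Corollary \ref{rvt:coro_1}. The only cosmetic difference is that the paper cites the codimension bound on the real zero locus (established inside the proof of Theorem \ref{ztf:real_case:thm_10}) rather than the path-connectedness statement itself, but the argument is the same.
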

\begin{proof}
 Again, according to (\ref{tttt}), the matrix $\permat$ belongs to $\realsiegelnow{2g_0}{2g_0}{1}$ or $\realsiegelnow{2g_0+1}{2g_0}{1}$, with $g$ equal 
 to $2g_0$ or $2g_0+1$ for some integer $g_0\geq 1$ respectively. In view of theorem \ref{ztf:real_case:thm_10}, the real analytic variety
 \begin{align}
  \lb \zbf \mid \zbf \in \Rsp, \Thefunc{\zbf}{\permat}=0 \rb
 \end{align}
 has real dimension not greater than $g-3$. Moreover, the theta function in the last expression is continuous and, due to lemma \ref{rvt:lemma_1}, real on $\Rsp$.
 As a consequence, this last one is either nonnegative or nonpositive there. Corollary \ref{rvt:coro_1} implies that the former case holds here.
\end{proof}
\begin{proposition}\label{fin:prop_3}
 Let $(\Gamma, \sigma)$ be a real Riemann surface of topological type $(2g_0,1,1)$ or $(2g_0+1,2,1)$, for some integer $g_0\geq 1$. Let $\permat$ be a real period 
 matrix of its. One has
 \begin{subequations}\label{inequalities}
  \setcounter{equation}{14}
  \begin{align}\label{ineq_odd}
   \Theconst{\permat}{\mathrm{0}}{\bsbeta}<0, \quad\quad \quad \mbox{if } \,\, [\bsbeta]_2 \in \setonow{g}{2g_0}{1}
  \end{align}
  and
  \setcounter{equation}{4}
  \begin{align} \label{ineq_even}
   \Theconst{\permat}{\mathrm{0}}{\bsbeta}>0, \quad\quad \quad \mbox{if } \,\, [\bsbeta]_2 \in \setenow{g}{2g_0}{1}
  \end{align}
 \end{subequations}
\end{proposition}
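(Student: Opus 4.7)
The plan is to combine Proposition \ref{fin:prop_1} (which gives strict positivity of the theta function on the imaginary subspace $\Isp$ for real period matrices of these two topological types) with Lemma \ref{rvt:lemma_2} (which relates the sign of $\Thf$ at the special point $\permat(-M\bsbeta)+\nicefrac{1}{2}\bsbeta$ to the sign at $\nicefrac{1}{2}\bsbeta$, with a twist controlled by $\tp{\bsbeta}M\bsbeta \mod 4$). The key observation that ties these together is that, for the correct choice of representative $\bsbeta$ of the class $[\bsbeta]_2$, the auxiliary point $\permat(-M\bsbeta)+\nicefrac{1}{2}\bsbeta$ lands precisely in $\Isp$.

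In detail, I would first fix the real type. Since the topological types are $(2g_0,1,1)$ and $(2g_0+1,2,1)$, formula (\ref{tttt}) gives real type $(g,2g_0,1)$ in both cases, so $\permat$ lies in $\realsiegelnow{g}{2g_0}{1}$ and $M = \matmnow{g}{2g_0}{1}$. Given any $[\bsbeta]_2$ in $\setonow{g}{2g_0}{1}\cup \setenow{g}{2g_0}{1}$, I choose the unique representative $\bsbeta\in \{0,1\}^g$; the defining condition $\beta_{\lambda+1}\equiv\ldots\equiv \beta_g\equiv 0\mod 2$ then forces $\beta_{\lambda+1} = \ldots = \beta_g = 0$ exactly. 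By the block structure of $M = \matmnow{g}{2g_0}{1}$ described in (\ref{intro:14}), this ensures $M^2\bsbeta = \bsbeta$, whence
\begin{align*}
\permat(-M\bsbeta) + \nicefrac{1}{2}\bsbeta = (\permat-\nicefrac{1}{2}M)(-M\bsbeta) = -iTM\bsbeta \in \Isp.
\end{align*}

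Next I apply Proposition \ref{fin:prop_1} to conclude that $\Thefunc{-iTM\bsbeta}{\permat}>0$, and Lemma \ref{rvt:lemma_2} to obtain
\begin{align*}
\sgn \Thefunc{\permat(-M\bsbeta)+\nicefrac{1}{2}\bsbeta}{\permat}
=
\begin{cases}
\phantom{-}\sgn \Thefunc{\nicefrac{1}{2}\bsbeta}{\permat} & \text{if } \tp{\bsbeta}M\bsbeta\equiv 0 \mod 4\\
-\sgn \Thefunc{\nicefrac{1}{2}\bsbeta}{\permat} & \text{if } \tp{\bsbeta}M\bsbeta\equiv 2 \mod 4.
\end{cases}
\end{align*}
Comparing signs and recalling from (\ref{fin:form_1}) that $\Theconst{\permat}{\mathrm{0}}{\bsbeta} = \Thefunc{\nicefrac{1}{2}\bsbeta}{\permat}$ gives (\ref{ineq_even}) when $[\bsbeta]_2\in\setenow{g}{2g_0}{1}$ and (\ref{ineq_odd}) when $[\bsbeta]_2\in\setonow{g}{2g_0}{1}$, as desired.

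There is essentially no hard obstacle: everything reduces to the preceding lemmas. The only thing requiring a small check is that the chosen representative of $[\bsbeta]_2$ is innocuous — namely, that both the theta-constant $\Theconst{\permat}{\mathrm{0}}{\bsbeta}$ and the residue $\tp{\bsbeta}M\bsbeta \mod 4$ depend only on the class mod $2$. The former is (\ref{form_segno_theta_char}) with vanishing first characteristic, and the latter is a one-line computation: replacing $\bsbeta$ by $\bsbeta+2\mathbf{f}$ changes $\tp{\bsbeta}M\bsbeta$ by $4(\tp{\mathbf{f}}M\bsbeta+\tp{\mathbf{f}}M\mathbf{f})$, which is a multiple of $4$.
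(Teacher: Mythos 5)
Your proposal is correct and follows essentially the same route as the paper's own proof: both rest on the identity $\permat(-M\bsbeta)+\nicefrac{1}{2}\bsbeta=(\permat-\nicefrac{1}{2}M)(-M\bsbeta)\in \Isp$ (valid because the vanishing of $\beta_{\lambda+1},\ldots,\beta_g$ gives $M(M\bsbeta)=\bsbeta$), combined with Lemma \ref{rvt:lemma_2} for the sign twist and Proposition \ref{fin:prop_1} for strict positivity on $\Isp$. The only difference is presentational — you treat both topological types and both inequalities uniformly and spell out the harmlessness of the choice of representative mod $2$, where the paper does one case and declares the rest similar.
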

Recall that the sets $\setonow{g}{2g_0}{1}$ and $\setenow{g}{2g_0}{1}$ have been defined in (\ref{def_set_oe}) and that in view of (\ref{form_segno_theta_char}) 
the theta constants in (\ref{inequalities}) are independent of the particular representative of $[\bsbeta]_2$.

\begin{proof}
 First consider the case of a real Riemann surface of topological type $(2g_0+1,2,1)$. According to (\ref{tttt}) all real period matrices of this last 
 one are real Riemann matrices of the real Siegel upper-half plane $\realsiegelnow{2g_0+1}{2g_0}{1}$. Denote with $M$ the matrix $\matmnow{2g_0+1}{2g_0}{1}$ defined in 
 (\ref{intro:14}). In view of (\ref{fin:form_1}), for (\ref{ineq_odd}) it suffices to prove that 
 \begin{align}
  \Thefunc{\nicefrac{\bsbeta}{2}}{\permat}<0
 \end{align}
 for all $\bsbeta\in  \Zbb^{2g_0+1}$ such that $\tp{\bsbeta}M\bsbeta\equiv 2 \mod 4$ and $\beta_{2g_0+1}=0$. In particular, this last condition implies 
 \begin{align}
  M(M\bsbeta) = \bsbeta.
 \end{align}
 Due to lemma \ref{rvt:lemma_2}, one can equivalently show for such vectors $\bsbeta$ the following inequality
 \begin{align}
  0<\Thefunc{\permat(-M\bsbeta)+\nicefrac{1}{2}\bsbeta}{\permat} = \Thefunc{(\permat-\nicefrac{1}{2}M)(-M\bsbeta)}{\permat}.
 \end{align}
 Notice that the argument of the theta function on the r.h.s. belongs to the space $\Isp$. This inequality follows then from proposition \ref{fin:prop_1}. The case of 
 real Riemann surfaces of topological type $(2g_0,1,1)$ as well as inequality (\ref{ineq_even}) can be treated similarly.
 \end{proof}
 \begin{theorem}[See Thm. \ref{theorem_2}]
  \label{fin:thm_ultimo}
  Let $(\Gamma, \sigma)$ be a real Riemann surface of genus greater or equal than two and $\permat$ a real period matrix of its.  
  If the topological type of $(\Gamma, \sigma)$ is $(2g_0, 1, 1)$ or $(2g_0+1,2,1)$, then all elements of the indexed family $\Ocal (\permat)$ are strictly negative. 
  If instead the topological type of $(\Gamma,\sigma)$ is $(g,0,0)$, then all elements of $\Ocal (\permat)$ are positive, some of them possibly vanishing.
 \end{theorem}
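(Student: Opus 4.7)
My plan is to deduce Theorem \ref{fin:thm_ultimo} as a direct assembly of Propositions \ref{fin:prop_2} and \ref{fin:prop_3} via the identity (\ref{fin:form_1}), which rewrites each theta-constant in $\Ocal(\permat)$ as the value of $\Thf$ at a half-integer vector $\nicefrac{\bsbeta}{2}$. The whole argument is then a matter of unpacking definitions and invoking the correct subspace ($\Rsp$ or $\Isp$) on which the relevant sign has been established.

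For the first case, where the topological type of $(\Gamma,\sigma)$ is $(2g_0,1,1)$ or $(2g_0+1,2,1)$, Theorem \ref{theorem_all_poss} places $\permat$ in the orthosymmetric real Siegel upper-half space $\realsiegelnow{g}{2g_0}{1}$. By definition (\ref{def_o_tau}), the family $\Ocal(\permat)$ is indexed by $\setonow{g}{2g_0}{1}$, so inequality (\ref{ineq_odd}) of Proposition \ref{fin:prop_3} gives immediately that each entry $\Theconst{\permat}{\mathbf{0}}{\bsbeta}$ is strictly negative. This settles the first assertion with no further work.

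For the second case, where the topological type is $(g,0,0)$, Theorem \ref{theorem_all_poss} again places $\permat$ in $\realsiegelnow{g}{2g_0}{1}$, so $\Ocal(\permat)$ is once more the family $\{\Theconst{\permat}{\mathbf{0}}{\bsbeta}\}_{[\bsbeta]_2 \in \setonow{g}{2g_0}{1}}$. Choosing any integer representative $\bsbeta$, the point $\nicefrac{\bsbeta}{2}$ belongs to the real subspace $\Rsp$ defined in (\ref{intro:def_RI}), and Proposition \ref{fin:prop_2} states precisely that $\Thf$ is nonnegative on $\Rsp$. Combining this with (\ref{fin:form_1}) yields $\Theconst{\permat}{\mathbf{0}}{\bsbeta}\geq 0$, as required.

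There is effectively no obstacle left at this stage, since the heavy lifting already resides in Propositions \ref{fin:prop_1}, \ref{fin:prop_2}, \ref{fin:prop_3}: Proposition \ref{fin:prop_1} rules out zeros of $\Thf$ on $\Isp$ via Proposition \ref{ztf:prop_100}; Proposition \ref{fin:prop_2} combines the codimension bound from Theorem \ref{ztf:real_case:thm_10} with the reality statement of Lemma \ref{rvt:lemma_1} and the positivity witness of Corollary \ref{rvt:coro_1}; and Proposition \ref{fin:prop_3} uses the sign-flip of Lemma \ref{rvt:lemma_2} to transport positivity on $\Isp$ into negativity at $\nicefrac{\bsbeta}{2}$. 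The only point requiring caution is the well-definedness of the families on the quotient $\Zbb_2^g$, which is guaranteed by the invariance identity (\ref{form_segno_theta_char}) and ensures that the choice of integer representative is immaterial.
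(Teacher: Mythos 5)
Your proposal is correct and follows essentially the same route as the paper: the separated case is read off directly from Proposition \ref{fin:prop_3}, and the case of topological type $(g,0,0)$ is obtained by observing that $\nicefrac{\bsbeta}{2}$ lies in $\Rsp$ and applying Proposition \ref{fin:prop_2} together with identity (\ref{fin:form_1}). Your additional remarks on where the real work resides and on well-definedness via (\ref{form_segno_theta_char}) are accurate but not needed beyond what the paper's own two-line argument already contains.
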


 \begin{proof} 
  The part concerning real Riemann surfaces of separated topological type is already contained in proposition \ref{fin:prop_3}. If $\permat$ is the real period matrix of a real Riemann surface without real points, one has
  \begin{align}
   \Thefunc{\nicefrac{\bsbeta}{2}}{\permat} \geq 0, \quad\quad\quad \forall \bsbeta\in \Zbb^g.
  \end{align}
  This follows from proposition \ref{fin:prop_2}, after observing that all points of the form $\nicefrac{1}{2}\bsbeta$ for $\bsbeta$ in $\Zbb^g$ belong 
  to $\Rsp$. The thesis follows then from identity (\ref{fin:form_1}).
 \end{proof}
 With an analogous argument, one proves the following
 \begin{theorem}\label{thm_E_quantities}
  Let $(\Gamma,\sigma)$ be a real Riemann surface of genus greater or equal than two and $\permat$ a real period matrix of its. If the topological type of 
  $(\Gamma,\sigma)$ is $(2g_0,1,1)$ or $(2g_0+1,2,1)$, then all elements of the indexed family $\Ecal (\permat)$ are strictly positive. If instead the topological type is 
  $(g,0,0)$, then all elements of $\Ecal(\permat)$ are nonnegative.
 \end{theorem}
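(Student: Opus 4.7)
The plan is to mirror the proof of Theorem \ref{fin:thm_ultimo} line by line, only swapping the role of the odd/even congruence mod $4$ in Lemma \ref{rvt:lemma_2}. The identity \ref{fin:form_1} lets us replace every theta-constant by a value of $\Thf$ at a half-integer point, so the whole question reduces to evaluating signs of $\Thefunc{\nicefrac{1}{2}\bsbeta}{\permat}$ for $[\bsbeta]_2\in \setenow{g}{2g_0}{1}$. For each such class I would pick the canonical representative $\bsbeta\in\{0,1\}^g$ with $\beta_{\lambda+1}=\cdots=\beta_g=0$ (where $\lambda=2g_0$); since $\matmnow{2g_0}{2g_0}{1}^2=\mathrm{Id}_{2g_0}$, this representative satisfies $M(M\bsbeta)=\bsbeta$, which is precisely the hypothesis needed to apply Lemma \ref{rvt:lemma_2}.

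For the separated topological types $(2g_0,1,1)$ and $(2g_0+1,2,1)$, the defining condition $\tp{\bsbeta}\matmnow{g}{2g_0}{1}\bsbeta \equiv 0 \mod 4$ of $\setenow{g}{2g_0}{1}$ puts us in the first branch of Lemma \ref{rvt:lemma_2}, yielding
\begin{align}
 \sgn \Thefunc{\permat(-M\bsbeta)+\nicefrac{1}{2}\bsbeta}{\permat}
 = \sgn \Thefunc{\nicefrac{1}{2}\bsbeta}{\permat}.
\end{align}
A direct computation, using $\permat=\nicefrac{1}{2}M+iT$ and $M^2\bsbeta=\bsbeta$, shows
\begin{align}
 \permat(-M\bsbeta)+\nicefrac{1}{2}\bsbeta = -iTM\bsbeta \in \Isp,
\end{align}
so Proposition \ref{fin:prop_1} forces the left-hand side to be strictly positive. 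Hence $\Thefunc{\nicefrac{1}{2}\bsbeta}{\permat}>0$, and by \ref{fin:form_1} every element of $\Ecal(\permat)$ is strictly positive.

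For the nonseparated type $(g,0,0)$ the argument is even shorter: the point $\nicefrac{1}{2}\bsbeta$ lies in $\Rsp$, and Proposition \ref{fin:prop_2} gives $\Thefunc{\nicefrac{1}{2}\bsbeta}{\permat}\geq 0$ for every $\bsbeta$, which via \ref{fin:form_1} yields the claimed nonnegativity of every element of $\Ecal(\permat)$.

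Since both Propositions \ref{fin:prop_1} and \ref{fin:prop_2} and Lemma \ref{rvt:lemma_2} are in place, there is no genuine obstacle; the only point requiring a sentence of justification is the verification that the representative $\bsbeta\in\{0,1\}^g$ with $\beta_{\lambda+1}=\cdots=\beta_g=0$ exists for every class in $\setenow{g}{2g_0}{1}$ and fulfills $M^2\bsbeta=\bsbeta$, which is immediate from definition \ref{def_set_e} and the block form of $\matm$ in \ref{intro:14}.
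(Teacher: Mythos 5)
Your proof is correct and follows essentially the same route as the paper, which establishes the even-characteristic inequality inside Proposition \ref{fin:prop_3} via Lemma \ref{rvt:lemma_2} (first branch) together with Propositions \ref{fin:prop_1} and \ref{fin:prop_2}, exactly as you do. The only cosmetic slip is calling $M(M\bsbeta)=\bsbeta$ "the hypothesis needed to apply Lemma \ref{rvt:lemma_2}" --- that lemma holds for arbitrary $\bsbeta\in\Zbb^g$; the identity is instead what places $\permat(-M\bsbeta)+\nicefrac{1}{2}\bsbeta$ in $\Isp$, which is how you actually use it.
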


\section{Acknowledgment}

The author wishes to thank Prof. V. Kharlamov and Prof. H. Braden for their valuable feedback about the research plan, Prof. B. Dubrovin and Prof. K. Hulek for supportive discussions during the investigations, and Prof. A. Moro and Prof. M. Mazzocco for their precious help by the revision and improvement of the manuscript.


\bibliographystyle{plain} 
\bibliography{./math_ref}

\end{document}